\newtheorem{theorem}{Theorem}[section]
\newtheorem{proposition}[theorem]{Proposition}
\newtheorem{lemma}[theorem]{Lemma}
\newtheorem{fact}[theorem]{Fact}
\theoremstyle{definition}
\newtheorem{definition}[theorem]{Definition}
\newtheorem{question}[theorem]{Question}
\theoremstyle{remark}
\newtheorem{remark}[theorem]{Remark}
\numberwithin{equation}{section}
\newcommand\C{\mathbb{C}}
\newcommand\N{\mathbb{N}}
\newcommand{\bM}{\mathbb{M}}
\newcommand\cT{\mathcal{T}}
\newcommand{\Cst}{\mathrm{C}^*}
\DeclareMathOperator{\tr}{tr}
\DeclareMathOperator{\id}{id}
\DeclareMathOperator{\Lip}{Lip}
\DeclareMathOperator{\Ran}{Ran}
\DeclareMathOperator{\op}{op}
\DeclareMathOperator{\supp}{supp}
\DeclarePairedDelimiter{\ip}{\langle}{\rangle}
\DeclarePairedDelimiter{\norm}{\lVert}{\rVert}
\newcommand{\freedistance}{W_{H,\wedge}}
\newcommand{\tensordistance}{W_{H,\otimes}}
\newcommand{\ldistance}{W_{H,1}}
\title[Quantum Wasserstein distances for quantum permutation groups]{Quantum Wasserstein distances \\ for quantum permutation groups}
\author{Anshu}
\address{\parbox{\linewidth}{Fields Institute for Research in Mathematical Sciences\\
		222 College Street, Toronto, Ontario, Canada}}
\address{\parbox{\linewidth}{
		Department of Mathematics and Statistics, Ross Building, York University\\
		Toronto, Ontario, Canada}}
\address[Current address]{\parbox{\linewidth}{Department of Mathematics, University of Ulsan\\
		93 Daehak-ro, Nam-gu, Ulsan Metropolitan City, South Korea}}
\email{aanshu@yorku.ca}
\urladdr{https://sites.google.com/view/anshunirbhay/research?authuser=0}
\author{David Jekel}
\address{\parbox{\linewidth}{Department of
		Mathematical Sciences, University of Copenhagen \\
		Universitetsparken 5, 2100 Copenhagen \O, Denmark}}
\email{daj@math.ku.dk}
\urladdr{http://davidjekel.com}
\author{Therese Basa Landry}
\address{\parbox{\linewidth}{Department of
		Mathematics, South Hall, Room 6607, University of California, \\
		Santa Barbara, CA 93106, United States}}
\email{tlandry@ucsb.edu}
\subjclass[2020]{Primary: 46L67, 49Q22; Secondary: 46L89, 81R60}
\keywords{quantum permutation group, quantum metric space, quantum optimal transport, Hamming distance}
\begin{document}
	
	\maketitle
	
	\begin{abstract}
		We seek an analog for the quantum permutation group $S_n^+$ of the normalized Hamming distance for permutations.  We define three distances on the tracial state space of $C(S_n^+)$ that generalize the $L^1$-Wasserstein distance of probability measures on $S_n$ equipped with the normalized Hamming metric, for which we demonstrate basic metric properties, subadditivity under convolution, and density of the Lipschitz elements in the $\Cst$-algebra.
	\end{abstract}

	\section{Introduction}

	\subsection{Motivation}
	
	Our goal in this paper is to ``quantize'' the Hamming distance on the permutation group $S_n$ to obtain a ``quantum metric'' on $C(S_n^+)$.  This is motivated on the one hand by non-commutative geometry and Marc A. Rieffel's notion of quantum metric spaces \cite{Rieffel1998,Rieffel1999}, which was adapted to the setting of tracial states by Jacelon \cite{Jacelon2023metrics,Jacelon2024}, and on the other hand by recent developments in non-commutative optimal transport theory for quantum states, e.g.\ \cite{SkalskiTodorovTurowska2025,CaputoGerolinMoninaPortinale2024,ACJP2023}.
	
	The program of non-commutative geometry initiated by Alain Connes adapts classical tools from topology and Riemannian geometry to the operator algebraic setting.  For example, Connes showed in \cite{Connes1989,Connes1994} that the geodesic distance $d_g$ on a compact Riemannian spin manifold $M$ can be recovered from the $C^*$-algebra $C(M)$, the Hilbert space $H$ of $L^2$-spinor fields, and the Dirac operator.\footnote{Similarly, for a general oriented compact Riemannian manifold, the geodesic distance can be recovered from the Hodge Laplacian.}  He further noted that many of the arguments did not require commutativity of the underlying $\mathrm{C}^*$-algebra and hence should generalize to non-commutative operator algebras.  Rieffel developed a corresponding notion of metric spaces in the non-commutative setting \cite{Rieffel1998,Rieffel1999,Rieffel2002}.  The idea is to capture the underlying metric in a dual way through the associated $L^1$-Wasserstein distance on states.  Given a Dirac operator $D$ one can define a Lipschitz norm on the $\mathrm{C}^*$-algebra $A$ by $\norm{a}_{\Lip} = \norm{[a,D]}$, and then the ``$L^1$-Wasserstein distance'' on states is given by $W_1(\varphi,\psi) = \sup \{|\varphi(a) - \psi(a)|: \norm{a}_{\Lip} \leq 1\}$.  Rieffel investigated when this metric recovers the weak-$*$ topology on the state space \cite{Rieffel1998}.  In some cases, this metric also gives rise to a metric on the density space of a unital $\Cst$-algebra whose induced topology coincides with that of the Bures metric from quantum information theory \cite{AguilarBeheraOmlandWu}.
	
	For a discrete abelian group $G$, the dual group $\widehat{G}$ is compact and the (full or reduced) group $\Cst$-algebra $\Cst(G)$ is isomorphic to the continuous functions $C(\widehat{G})$. Thus, a natural class of examples to apply quantum metric theory in the sense of Rieffel arises from group $\mathrm{C}^*$-algebras.  In particular, Christ and Rieffel studied quantum metric space structures on $\Cst_r(G)$ where the metric is defined via a length function \cite{ChristRieffel2017}.  Length functions and the associated quantum metric spaces have also been investigated in the more general discrete \emph{quantum groups}, as well as the quantum permutation group $S_4^+$ \cite{BhomwickVoigtZacharias2015,AustadKyed2025}, and quantum metric structures have also been developed for homogeneous spaces of quantum $SU(2)$ \cite{AguilarKaad,AguilarKaadKyed1,AguilarKaadKyed2}.
	
	Compact quantum groups, as defined and developed by Woronowicz \cite{Woronowicz1979,Woronowicz1987,Woronowicz1998}, can be viewed as quantum analogues of the continuous functions on a group $C(G)$, where $G$ is a locally compact group.  To define multiplication in this setting, one must again dualize.  For a group $G$, there is a natural coassociative operation $\Delta: C(G) \to C(G \times G)$ given by $\Delta f(s,t) = f(st)$, called the \emph{comultiplication}, which encodes the group multiplication structure.  Thus, a quantum group structure on a $\Cst$-algebra $A$ is defined via a coassociative operation $\Delta: A \to A \otimes A$ and a counit $\varepsilon: A \otimes A \to A$ satisfying certain axioms.
	
	Our work focuses on the quantum permutation groups $S_n^+$.  In answer to a question posed by Connes, Wang developed the notion of  \emph{quantum symmetries} for an $n$-point set by defining \cite{Wang1998}).  The underlying $\Cst$-algebra $C(S_n^+)$ is the universal (unital) $\Cst$-algebra generated by elements $(u_{ij})_{i,j=1}^n$ such that $u_{ij}$ is a projection and $\sum_i u_{ij} = 1$ and $\sum_j u_{ij} =1 $.  The comultiplication on the generators is given by $\Delta(u_{ij}) = \sum_k u_{ik} \otimes u_{kj}$ which generalizes the matrix multiplication structure in a natural way; $u_{ij}$ corresponds in the classical case to the continuous function on $S_n$ that evaluates the $ij$ entry of the permutation matrix, or $\mathbbm{1}_{\sigma(i)=j}$.
	
	Since $C(S_n^+)$ is not a discrete quantum group for $n \geq 4$, it is not clear to us how to adapt the notion of length function per se \cite{AustadKyed2025}.  Rather, we take inspiration from cost operators used in quantum optimal transport theory.  One of the goals of this subject is to find appropriate analogs of Wasserstein distances on the space of quantum states of some system (often the space of states or density operators on a finite-dimensional $\mathrm{C}^*$-algebra).  Quantum Wasserstein distances can be used as a measure of the effort it takes to transform one state to another by elementary operations (see e.g.\ \cite{ACJP2023}), as well as to define a Riemannian structure analogous to the manifold of probability measures \cite{CarlenMaas2017}.
	
	As in classical optimal transport, one needs both a \emph{cost} to measure the distances between points as well as a notion of a \emph{transport plan} or a \emph{coupling} for joining two states.  Let us start with the cost operator.  If the underlying operator algebra $A$ is analogous to $C(X)$, then a cost or distance function $X \times X \to [0,\infty)$ would correspond to a positive element $C$ of $A \otimes A$.  The symmetry of distance means that $C$ is symmetric under the tensor flip, and the triangle inequality means that $\iota_{1,3}(C) \leq \iota_{1,2}(C) + \iota_{2,3}(C)$ where $\iota_{i,j}$ is the inclusion of $A \otimes A$ into the $i$th and $j$th copies in $A \otimes A \otimes A$.
	
	To construct such a cost operator on the quantum permutation group, recall that the normalized Hamming distance on the permutation group $S_n$ is given by
	\[
	d_H(\sigma,\sigma') = \frac{1}{n} |\{i: \sigma(i) \neq \sigma'(i)\}|.
	\]
	This can be expressed in terms of the entries of the permutation matrices as follows:
	\[
	d_H(\sigma,\sigma') = 1 - \frac{1}{n} |\{i: \sigma(i) = \sigma'(i)\}| = 1 - \frac{1}{n} |\{i: \sigma^{-1} \sigma'(i) = i\}| = 1 - \tr_n(\sigma^*\sigma') = 1 - \frac{1}{n} \sum_{i,j=1}^n \sigma_{ij} \sigma'_{ij}.
	\]
	This expression represents an element of $C(S_n \times S_n)$ since $\sigma \mapsto \sigma_{ij}$ is a continuous function on $S_n$.  To obtain the quantum version, we just have to replace this continuous function on $S_n$ with the element $u_{ij} \in C(S_n^+)$.  This leads to the following cost operator in $C(S_n^+) \otimes C(S_n^+)$:
	\[
	C_H = 1 - \frac{1}{n} \sum_{i,j=1}^n u_{ij} \otimes u_{ij}.
	\]
	We show that $C_H$ satisfies the triangle inequality (see \S \ref{subsec: tensor Hamming}).  We also show a subadditivity property under comultiplication (see \S \ref{subsec: tensor convolution}).  This is a natural property if we want to obtain metrics on the state space that interact with convolution operation on states of $C(S_n^+)$; for instance, in order to study convolution semigroups on $S_n^+$ \cite{FranzKulaSkalski2016}.
	
	The notion of coupling in the non-commutative setting is much more subtle.  A natural approach is to define a coupling of two states $\varphi_1$ and $\varphi_2$ on a $\mathrm{C}^*$-algebra $A$ as a state $\varphi$ on $A \otimes A$ with $\varphi(a \otimes 1) = \varphi_1(a)$ and $\varphi(1 \otimes a) = \varphi_2(a)$; see e.g.\ \cite{SkalskiTodorovTurowska2025,CaputoGerolinMoninaPortinale2024}. However, there are major difficulties in proving the triangle inequality because of the inability to amalgamate couplings, and sometimes the self-distance is nonzero.  One can also consider unital completely positive maps $A \to A$ transforming the one state into the other.  One thing that helps in the von Neumann algebraic setting is that by imposing appropriate modular conditions on the states, one can guarantee that couplings can be amalgamated via the relative tensor product \cite{Duvenhage2022}. However, it is unclear how to adapt this method to the setting of $\mathrm{C}^*$-algebras, especially with states that are not faithful.
	
	There are other approaches that are not based on a cost operator per se, but rather on other structures of the underlying space.  For instance, \cite{DePalmaMarvianTrevisanLloyd2021} defined a quantum Hamming distance on the state space for $n$ qdits, which gives a quantum version of quantifying changes one bit at a time.  Very recently Rieffel \cite{Rieffel2025} explained how similar quantum Hamming distances give rise to quantum compact metric spaces, and also shows how to encode the metric data in Dirac operator type structures.  In fact, another Hamming-like distance has been used to study the ideal structure of AF algebras \cite{AguilarBatterman}.  However, the Hamming distance in this paper is distinct and it is unclear how they relate, since the quantum permutation group does not relate naturally to an $n$-fold tensor product structure.
	
	
	Many of these issues become easier to deal with in the setting of \emph{traces} as opposed to general states. A trace on a $\Cst$-algebra automatically produces a tracial von Neumann algebra on which the trace is faithful (see Lemma \ref{lem: GNS tracial von Neumann algebra}), while in general it is difficult to tell whether a state on a $\Cst$-algebra gives rise to a faithful state on a von Neumann algebra and if so, how its modular group behaves.  The work of Biane and Voiculescu \cite{BianeVoiculescu2001} gives natural $L^p$-Wasserstein distances on tracial states on a certain universal $\Cst$-algebra with specified generators; here the couplings are given by embeddings into an arbitrary larger tracial von Neumann algebra, in which one can measure the distance between the two copies of the generators.  Moreover, the trace space of a $\Cst$-algebra, unlike the state space, is always a Choquet simplex, meaning that every trace has a unique decomposition in terms of extreme points (see \cite{BlackadarRordam2024}).
	
	Although restricting to tracial states may seem untenable for applications in quantum theory, many $\Cst$-algebras admit an abundance of traces, which have played a key role in the stably finite case of the classification of unital separable simple nuclear $\mathcal{Z}$-stable $\Cst$-algebras satisfying the UCT; see \cite{CGSTWclassification} for a version of the classification theorem and its history.  Motivated by the classification program, Jacelon \cite{Jacelon2023metrics,Jacelon2023chaotic,Jacelon2024} studied certain metrics on the trace space of classifiable $\Cst$-algebras $A$, showing that they are in duality with Lipschitz seminorms that vanish on the trace kernel of $A$, and hence are conceptually not far from quantum metrics defined by Rieffel.  In fact, he showed that any compact metric space can be realized as the extreme boundary of the trace space of some classifiable $\mathrm{C}^*$-algebra \cite[Theorem 4.4]{Jacelon2023chaotic}, \cite[Theorem A]{Jacelon2024}.
	
	While classifiable $\Cst$-algebras with large trace spaces require some effort to construct, it is much easier to see that many universal $\Cst$-algebras given by generators and relations have extremely large trace spaces.  For instance, the trace space on the universal $\mathrm{C}^*$-algebra generated by self-adjoints $\norm{x_j} \leq R$ need not be separable with respect to the Biane--Voiculescu--Wasserstein distance \cite[\S 5.5]{GangboJekelNamShlyakhtenko2022}. The $\Cst$-algebra of the quantum permutation group studied in this paper is also a universal $\Cst$-algebra that similarly has an abundance of traces, including many coming from finite-dimensional representations (see e.g.\ \cite{FreslonSkalskiWang2024}).

	\subsection{Results}
	
	In this work, we study three analogs of the Hamming distance on trace space of $C(S_n^+)$, which we call $\freedistance$, $\ldistance$, and $\tensordistance$.  The first two distances are based on the notion of tracial couplings as in Biane and Voiculescu's work \cite{BianeVoiculescu2001}, while the third is based on tracial tensor product couplings.
	
	\begin{definition}[Tracial couplings] \label{def: tracial coupling}
		Let $A$ be a $\mathrm{C}^*$-algebra and let $\varphi_1$, $\varphi_2 \in \mathcal{T}(A)$.  A \emph{tracial coupling} of $\varphi_1$ and $\varphi_2$ is a tuple $(M,\tau,\alpha_1,\alpha_2)$ where $(M,\tau)$ is a tracial von Neumann algebra and $\alpha_j: A \to M$ is a $*$-homomorphism such that $\varphi_j = \tau \circ \alpha_j$ for $j = 1$, $2$.
	\end{definition}
	
	For a tracial coupling $(M,\tau,\alpha_1,\alpha_2)$, the two projections $\alpha_1(u_{ij})$ and $\alpha_2(u_{ij})$ need not commute with each other, and $1 - (1/n) \sum_{i,j} \tau(\alpha_1(u_{ij}) \alpha_2(u_{ij}))$ need not define a metric.  However, there are at least two ways to modify this expression to obtain a metric.  The first is to replace the product of the two projections by the intersection $\alpha_1(u_{ij}) \wedge \alpha_2(u_{ij})$.
	
	\begin{definition}[Free Hamming distance via intersections]
		For $\varphi_1$, $\varphi_2 \in \mathcal{T}(C(S_n^+))$, define
		\[
		\freedistance(\varphi_1,\varphi_2) = \inf_{(M,\tau,\alpha_1,\alpha_2)} \left[ 1 - \frac{1}{n} \sum_{i,j=1}^n \tau(\alpha_1(u_{ij}) \wedge \alpha_2(u_{ij})) \right],
		\]
		where the infimum is over all tracial couplings $(M,\tau,\alpha_1,\alpha_2)$.
	\end{definition}
	
	As noted in \S \ref{subsec: free distance definition}, some tracial coupling always exists (e.g.\ by taking a tensor product), and although the class of tracial couplings is not a \emph{set}, we can rephrase the infimum in terms of sets in the situations that we care about, thanks to Lemma \ref{lemma:tracialcouplingequivalent}.  Hence, the infimum is well-defined and finite.
	
	Another natural choice of distance looks at the non-commutative $L^1$-distances $\norm{\alpha_1(u_{ij}) - \alpha_2(u_{ij})}_{L^1(M,\tau)}$.  To motivate this, recall that for two permutation matrices $\sigma$ and $\sigma'$, we have
	\[
	d_H(\sigma,\sigma') = \frac{1}{n} \sum_{i,j=1}^n \frac{1}{2} |\sigma_{ij} - \sigma_{ij}'|.
	\]
	Indeed, if $\sigma(j) = \sigma'(j)$, then the $j$th columns of the two matrices agree, but if $\sigma(j) \neq \sigma'(j)$, then the $j$th columns of the matrices differ in exactly two rows, so $\sum_i |\sigma_{ij} - \sigma_{ij}'| = 2$.  Since $u_{ij}$ is analogous to the value of the $ij$ entry of the matrix, we make the following definition.
	
	\begin{definition}[Free Hamming distance via $L^1$-norms] \label{def:l1distace}
		For $\varphi_1$, $\varphi_2 \in \mathcal{T}(C(S_n^+))$, define
		\[
		\ldistance(\varphi_1,\varphi_2) = \inf_{(M,\tau,\alpha_1,\alpha_2)} \frac{1}{n} \sum_{i,j=1}^n \frac{1}{2} \norm{\alpha_1(u_{ij}) - \alpha_2(u_{ij})}_{L^1(M,\tau)},
		\]
		where the infimum is over all tracial couplings $(M,\tau,\alpha_1,\alpha_2)$.
	\end{definition}
	
	We summarize our results about $\freedistance$ and $\ldistance$ in the following theorem.
	
	\begin{theorem} \label{thm: free distance summary} ~
		\begin{enumerate}[(1)]
			\item $\freedistance$ defines a metric on $\mathcal{T}(C(S_n^+))$.  (See Proposition \ref{prop: freedistance is a metric}.)
			\item For tracial states $\varphi_1$, $\varphi_2$, $\psi_1$, $\psi_2$, we have
			\[
			\freedistance(\varphi_1*\psi_1, \varphi_2*\psi_2) \leq \freedistance(\varphi_1,\varphi_2) + \freedistance(\psi_1,\psi_2).
			\]
			(See Proposition \ref{prop: convolution}.)
			\item The infimum in the definition of $\freedistance$ is a minimum (see Lemma \ref{lem: infimum achieved}.)
			\item $\freedistance$ is lower semi-continuous with respect to the weak-$*$ topology, and convergence in $\freedistance$ implies weak-$*$ convergence.  (See Lemma \ref{lem: weak* semicontinuity 1}.)
		\end{enumerate}
		The same properties also hold for $\ldistance$.  (See Lemma \ref{lem:inf-l1distance} and Proposition \ref{prop: L distance properties}.)
	\end{theorem}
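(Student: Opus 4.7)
The plan is to verify each of the four properties in turn for $\freedistance$, then argue the parallel statements for $\ldistance$ by the same template. For part (1), symmetry of $\freedistance$ is immediate from swapping $\alpha_1$ and $\alpha_2$ in the definition, and the diagonal coupling $\alpha_1 = \alpha_2$ yields $\freedistance(\varphi,\varphi) = 0$. Non-negativity and boundedness follow from $\alpha_1(u_{ij}) \wedge \alpha_2(u_{ij}) \le \alpha_2(u_{ij})$ combined with $\sum_i \varphi_2(u_{ij}) = 1$ for each $j$, so that $\sum_{i,j} \tau(\alpha_1(u_{ij}) \wedge \alpha_2(u_{ij})) \le n$. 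Non-degeneracy will be deduced from part (3) together with the Lipschitz-type bound
\[
|\varphi_1(u_{ij}) - \varphi_2(u_{ij})| \le \tau(\alpha_1(u_{ij})) + \tau(\alpha_2(u_{ij})) - 2\tau(\alpha_1(u_{ij}) \wedge \alpha_2(u_{ij})),
\]
which, upon summing and optimizing over couplings, gives $\sum_{ij}|\varphi_1(u_{ij}) - \varphi_2(u_{ij})| \le 2n\,\freedistance(\varphi_1,\varphi_2)$; this estimate will do double duty in part (4).

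The triangle inequality is the most delicate algebraic step. Given $\varepsilon$-near-optimal couplings $(M_{12}, \tau_{12}, \alpha_1, \alpha_2)$ of $(\varphi_1, \varphi_2)$ and $(M_{23}, \tau_{23}, \beta_2, \beta_3)$ of $(\varphi_2, \varphi_3)$, I will amalgamate them over the common image of $\varphi_2$. The von Neumann algebras $\alpha_2(C(S_n^+))''$ and $\beta_2(C(S_n^+))''$ each carry the tracial state induced by $\varphi_2$, so by uniqueness of the tracial GNS construction (Lemma \ref{lem: GNS tracial von Neumann algebra}) they are canonically identified, and the tracial amalgamated free product $N := M_{12} *_{\varphi_2(C(S_n^+))''} M_{23}$ supplies a triple coupling $(N, \tau, \gamma_1, \gamma_2, \gamma_3)$. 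The inequality then reduces to the projection-lattice identity
\[
\tau(p \wedge q) + \tau(q \wedge r) = \tau((p \wedge q) \vee (q \wedge r)) + \tau((p \wedge q) \wedge (q \wedge r)) \le \tau(q) + \tau(p \wedge r),
\]
applied with $p = \gamma_1(u_{ij})$, $q = \gamma_2(u_{ij})$, $r = \gamma_3(u_{ij})$ and summed using $\sum_{ij}\tau(\gamma_2(u_{ij})) = n$.

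For convolution subadditivity (part 2), given couplings $(M, \tau, \alpha_1, \alpha_2)$ and $(N, \sigma, \beta_1, \beta_2)$, the comultiplication provides a natural coupling $\gamma_j := (\alpha_j \otimes \beta_j) \circ \Delta$ of $\varphi_j * \psi_j$, with $\gamma_j(u_{ij}) = \sum_k \alpha_j(u_{ik}) \otimes \beta_j(u_{kj})$. The projections $(\alpha_1(u_{ik}) \wedge \alpha_2(u_{ik})) \otimes (\beta_1(u_{kj}) \wedge \beta_2(u_{kj}))$ are pairwise orthogonal across $k$ (because $u_{ik} u_{ik'} = 0$ for $k \neq k'$) and each lies below both $\gamma_1(u_{ij})$ and $\gamma_2(u_{ij})$, so their sum bounds $\gamma_1(u_{ij}) \wedge \gamma_2(u_{ij})$ from below. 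Setting $A_k := \sum_i \tau(\alpha_1(u_{ik}) \wedge \alpha_2(u_{ik})) \in [0,1]$ and $B_k := \sum_j \sigma(\beta_1(u_{kj}) \wedge \beta_2(u_{kj})) \in [0,1]$, the subadditivity reduces to $A_k + B_k - A_k B_k = 1 - (1-A_k)(1-B_k) \le 1$, summed over $k=1,\dots,n$. For part (3), Lemma \ref{lemma:tracialcouplingequivalent} permits taking the infimum over a genuine \emph{set} of separable couplings, so I take a minimizing sequence and form a tracial ultraproduct along a free ultrafilter $\mathcal{U}$. The ultralimit $*$-homomorphisms $\alpha_1^\infty, \alpha_2^\infty$ furnish a coupling of $\varphi_1, \varphi_2$, and the key observation is that although the wedge of projections is not $2$-norm continuous, one has $[\alpha_1^n(u_{ij}) \wedge \alpha_2^n(u_{ij})]_\mathcal{U} \le \alpha_1^\infty(u_{ij}) \wedge \alpha_2^\infty(u_{ij})$ in the ultraproduct, since the left-hand side lies below both $\alpha_j^\infty(u_{ij})$; hence the limit coupling has cost at most $\liminf_n$.

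The same ultraproduct construction handles lower semi-continuity in part (4): given weak-$*$ convergent $\varphi_j^n \to \varphi_j$ with $\liminf \freedistance(\varphi_1^n, \varphi_2^n) < \infty$, minimizing couplings (by part (3)) pass to a coupling of the limits of cost at most that liminf. The reverse implication follows immediately from the Lipschitz estimate in paragraph one, which forces $\varphi^n(u_{ij}) \to \varphi(u_{ij})$ for every $i,j$; since the $u_{ij}$ generate $C(S_n^+)$ and $\|\varphi^n\|=1$, this extends to weak-$*$ convergence on all of $C(S_n^+)$. The analogous statements for $\ldistance$ follow by the same template, with the ordinary triangle inequality for $\|\cdot\|_{L^1(M,\tau)}$ replacing the projection-lattice identity in the triangle step, and with subadditivity under comultiplication reduced to a linear rather than multiplicative scalar bound. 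The principal technical obstacle I anticipate is the construction of the amalgamated free product when $\varphi_2$ is not faithful on $C(S_n^+)$: this forces one to first pass through the tracial GNS quotient and then check that trace-preserving embeddings of $M_{12}$ and $M_{23}$ into the amalgamated free product persist after this identification.
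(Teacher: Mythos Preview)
Your overall architecture is sound and largely parallels the paper: the amalgamated free product for the triangle inequality, the tensor-of-couplings construction for convolution subadditivity, and the projection-lattice identity $\tau(p\wedge q)+\tau(q\wedge r)\le \tau(q)+\tau(p\wedge r)$ are exactly what the paper uses. Your ultraproduct argument for (3) and for lower semi-continuity is a legitimate alternative to the paper's route, which instead rewrites the cost via Lemma~\ref{lemma:hammingcostexpression} as $\sup_k f_k(\varphi)$ for weak-$*$ continuous $f_k$ on the compact set $\mathcal{T}_{\varphi_1,\varphi_2}(A*A)$ and invokes lower semi-continuity directly; both work, and your observation that $[\alpha_1^n(u_{ij})\wedge\alpha_2^n(u_{ij})]_{\mathcal U}\le \alpha_1^\infty(u_{ij})\wedge\alpha_2^\infty(u_{ij})$ is the correct semicontinuity input on the ultraproduct side.

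There is, however, a genuine gap in your non-degeneracy argument and in the second half of (4). Your Lipschitz-type bound only controls $|\varphi_1(u_{ij})-\varphi_2(u_{ij})|$, i.e.\ the \emph{first} moments. Since $C(S_n^+)$ is noncommutative and infinite-dimensional for $n\ge 4$, two distinct traces can agree on every generator $u_{ij}$ while differing on products such as $u_{i_1j_1}u_{i_2j_2}\cdots$; agreement on generators does not propagate to the $*$-algebra they generate. Consequently, $\freedistance(\varphi_1,\varphi_2)=0$ together with your bound gives only $\varphi_1(u_{ij})=\varphi_2(u_{ij})$, not $\varphi_1=\varphi_2$; and $\freedistance(\varphi^n,\varphi)\to 0$ gives only $\varphi^n(u_{ij})\to\varphi(u_{ij})$, which does not force weak-$*$ convergence on all of $C(S_n^+)$.

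The fix is already implicit in your use of part (3). Once you have an \emph{optimal} coupling $(M,\tau,\alpha_1,\alpha_2)$ with cost zero, each summand $\tau(\alpha_1(u_{ij}))-\tau(\alpha_1(u_{ij})\wedge\alpha_2(u_{ij}))$ vanishes, so by faithfulness $\alpha_1(u_{ij})=\alpha_1(u_{ij})\wedge\alpha_2(u_{ij})=\alpha_2(u_{ij})$ as operators in $M$; since the $u_{ij}$ generate $C(S_n^+)$ as a $\Cst$-algebra, $\alpha_1=\alpha_2$ and hence $\varphi_1=\tau\circ\alpha_1=\tau\circ\alpha_2=\varphi_2$. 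This is the paper's argument. For the second half of (4), either run the paper's compactness-plus-nondegeneracy argument, or strengthen your Lipschitz estimate to arbitrary monomials $u_{i_1j_1}\cdots u_{i_\ell j_\ell}$ (which is straightforward for $\ldistance$ via H\"older and telescoping, and then transfers to $\freedistance$ via $\ldistance\le\freedistance$); convergence on a dense $*$-subalgebra together with uniform boundedness then gives weak-$*$ convergence.
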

	
	We remark that the proof of the triangle inequality uses the amalgamated free product construction, just as for Biane and Voiculescu's Wasserstein distance in free probability.  The traciality of $\varphi$ and $\psi$ is essential for the triangle inequality because we need to use the fact that for projections $p$ and $q$ in a tracial von Neumann algebra, $\tau(p \wedge q) + \tau(p \vee q) = \tau(p) + \tau(q)$, which fails for general states.
	
	The third notion of couplings that we consider is the na{\"\i}ve one of states on the tensor product with the given marginals, except that these states are restricted to be tracial.  The corresponding distance $\tensordistance$ satisfies the triangle inequality but the self-distance is not necessarily zero.  Our results on $\tensordistance$ are summarized as follows.
	
	\begin{theorem} \label{thm: tensor distance summary} ~
		\begin{enumerate}[(1)]
			\item $\tensordistance$ is nonnegative and symmetric and satisfies the triangle inequality.  (See Lemma \ref{lem: tensor metric properties}.)
			\item We have
			\[
			\tensordistance(\varphi_1,\varphi_2) \geq \frac{1}{2} \tensordistance(\varphi_1,\varphi_1) + \frac{1}{2} \tensordistance(\varphi_2,\varphi_2).
			\]
			(See Lemma \ref{lem: self distance estimate}.)
			\item For tracial states $\varphi_1$, $\varphi_2$, $\psi_1$, $\psi_2$, we have
			\[
			\tensordistance(\varphi_1*\psi_1, \varphi_2*\psi_2) \leq \tensordistance(\varphi_1,\varphi_2) + \tensordistance(\psi_1,\psi_2).
			\]
			(See Proposition \ref{prop: convolution for tensor distance}.)
		\end{enumerate}
	\end{theorem}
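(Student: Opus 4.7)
The plan is to establish the three parts in sequence, leveraging the pointwise triangle inequality $\iota_{1,3}(C_H) \leq \iota_{1,2}(C_H) + \iota_{2,3}(C_H)$ in $A^{\otimes 3}$ (with $A = C(S_n^+)$), the tensor-flip symmetry of $C_H$, and the subadditivity of $C_H$ under comultiplication, all of which are already established in the relevant sections and referenced in the introduction.

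For part~(1), nonnegativity and symmetry are immediate: since $C_H \geq 0$ in $A \otimes A$, every tracial state evaluates it nonnegatively, and composing a tracial coupling with the tensor flip $\sigma$ produces a coupling of the swapped pair with identical cost by flip-invariance of $C_H$. The triangle inequality is the substantive step: given tracial couplings $\omega_{12}$ of $(\varphi_1,\varphi_2)$ and $\omega_{23}$ of $(\varphi_2,\varphi_3)$, the aim is to construct a tracial state $\omega_{123}$ on $A^{\otimes 3}$ whose $(1,2)$- and $(2,3)$-marginals recover $\omega_{12}$ and $\omega_{23}$. The $(1,3)$-marginal $\omega_{13}$ is then a tracial coupling of $(\varphi_1,\varphi_3)$, and evaluating the pointwise operator inequality on $\omega_{123}$ yields $\omega_{13}(C_H) \leq \omega_{12}(C_H) + \omega_{23}(C_H)$; taking infima gives the triangle inequality for $\tensordistance$. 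The main obstacle is the gluing itself. The approach is to pass to the GNS tracial von Neumann algebras $(M_{12},\tau_{12})$ and $(M_{23},\tau_{23})$, each carrying a pair of commuting $*$-homomorphisms $\alpha_1,\alpha_2:A\to M_{12}$ and $\beta_2,\beta_3:A\to M_{23}$, and to identify the shared middle copies $\alpha_2(A)''$ and $\beta_2(A)''$ via the trace $\varphi_2$. One then forms the Connes relative tensor product of the $A$--$A$ bimodules $L^2(M_{12})$ and $L^2(M_{23})$ amalgamated over this common subalgebra, as in the tracial modular framework of \cite{Duvenhage2022}, producing an ambient tracial von Neumann algebra housing three mutually commuting realizations of $A$; the corresponding vector state is $\omega_{123}$.

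For part~(2), with the triangle inequality in hand, the strategy is to use a given tracial coupling $\omega$ of $(\varphi_1,\varphi_2)$ to produce self-couplings of $\varphi_1$ and $\varphi_2$ whose costs are jointly controlled. A useful reformulation is the identity $\omega(C_H) = \frac{1}{2n}\sum_{i,j}\|\alpha_1(u_{ij})-\alpha_2(u_{ij})\|_{L^2(M,\tau)}^2$ in the GNS of $\omega$, which rephrases the tensor cost as a squared $L^2$-distance between pairs of projections. Using this, one builds via the same relative-tensor-product gluing a tracial state $\Omega$ on $A^{\otimes 4}$ with carefully chosen pairwise marginals: two that recover $\omega$ (or its flip), and two that yield the desired self-couplings of $\varphi_1$ and $\varphi_2$. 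The factor $\tfrac{1}{2}$ is delicate: a naive double application of the cost triangle inequality only delivers a factor-$\tfrac{1}{4}$ bound, so the argument must exploit the symmetric roles of $\varphi_1$ and $\varphi_2$ in the four-point construction and pair the two self-coupling cost estimates so that they telescope against $2\omega(C_H)$ rather than $4\omega(C_H)$. I expect this telescoping step, and the precise specification of the four pairwise marginals of $\Omega$ that makes it work, to be the main obstacle of part~(2).

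For part~(3), the argument is direct and explicit. Given tracial couplings $\omega$ of $(\varphi_1,\varphi_2)$ and $\eta$ of $(\psi_1,\psi_2)$, define the convolved coupling $\omega\ast\eta$ on $A\otimes A$ by $(\omega\ast\eta)(a\otimes b) = (\omega\otimes\eta)\circ s\circ (\Delta\otimes\Delta)(a\otimes b)$, where $s:A^{\otimes 4}\to A^{\otimes 4}$ is the permutation swapping the middle two tensor factors. Since $\Delta$, the swap $s$, and tensor products of tracial states all preserve traciality, $\omega\ast\eta$ is a tracial state, and a direct computation using Sweedler notation shows its marginals are precisely $\varphi_1\ast\psi_1$ and $\varphi_2\ast\psi_2$. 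The cost bound $(\omega\ast\eta)(C_H) \leq \omega(C_H) + \eta(C_H)$ then follows by applying the operator subadditivity of $C_H$ under comultiplication from \S\ref{subsec: tensor convolution}, in the form $s\circ(\Delta\otimes\Delta)(C_H) \leq C_H\otimes 1 + 1\otimes C_H$ in $A^{\otimes 4}$, to the state $\omega\otimes\eta$; taking infima over $\omega$ and $\eta$ yields the claim.
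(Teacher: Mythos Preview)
Your approach to part~(3) matches the paper's exactly: the convolved coupling $(\omega\otimes\eta)\circ\tilde\Delta$ with $\tilde\Delta = s\circ(\Delta\otimes\Delta)$, combined with the operator inequality $\tilde\Delta(C_H)\le \iota_{1,2}(C_H)+\iota_{3,4}(C_H)$, is precisely Lemma~\ref{comult-inequality} and Proposition~\ref{prop: convolution for tensor distance}.

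For parts~(1) and~(2), however, your route diverges from the paper's and carries a real gap. You propose to glue tracial tensor couplings via the Connes relative tensor product over the common middle subalgebra $\alpha_2(A)''\cong\beta_2(A)''$, citing Duvenhage's bimodule framework. The difficulty is that the fusion $L^2(M_{12})\otimes_N L^2(M_{23})$ carries a \emph{left} $M_{12}$-action and a \emph{right} $M_{23}$-action; in particular the middle copy of $A$ acts from the left via $\alpha_2$ and from the right via $\beta_2$, and these are not the same representation. Extracting from this a single tracial state on $A\otimes A\otimes A$ whose $(1,2)$- and $(2,3)$-marginals are the prescribed $\omega_{12}$ and $\omega_{23}$ is not automatic, and the paper explicitly flags this obstruction in the introduction (``it is unclear how to adapt this method to the setting of $\Cst$-algebras, especially with states that are not faithful''). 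The paper instead exploits that $\mathcal{T}(A)$ is a Choquet simplex: every trace on $A\otimes A$ disintegrates uniquely over $\partial_e\mathcal{T}(A)\times\partial_e\mathcal{T}(A)$ (Lemmas~\ref{lem: trace decomposition}--\ref{lem: tensor product traces}), so gluing $\omega_{12}$ and $\omega_{23}$ reduces to the classical conditional-independence join of probability measures on $\partial_e\mathcal{T}(A)^{\times 2}$ sharing a common marginal (Lemma~\ref{lem: tracial coupling glued}). This entirely avoids bimodule subtleties.

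The same Choquet decomposition also makes part~(2) a one-line computation rather than a delicate four-point telescoping. Once one knows $\tensordistance(\varphi,\varphi)=1-\frac{1}{n}\sum_{i,j}\int\tau(u_{ij})^2\,d\mu(\tau)$ (which follows because the diagonal coupling is optimal), the inequality $\tensordistance(\varphi_1,\varphi_2)\ge\tfrac12\tensordistance(\varphi_1,\varphi_1)+\tfrac12\tensordistance(\varphi_2,\varphi_2)$ is just the pointwise arithmetic--geometric mean $\tau_1(u_{ij})\tau_2(u_{ij})\le\tfrac12\tau_1(u_{ij})^2+\tfrac12\tau_2(u_{ij})^2$ integrated against any coupling measure (Lemma~\ref{lem: self distance estimate}). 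Your proposed four-leg gluing with a telescoping step is not needed, and as you yourself note, it is not clear how to make the factor $\tfrac12$ emerge that way.
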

	
	A key point in proving the triangle inequality for the tensor couplings is the ability to amalgamate tensor couplings of $(\varphi_1,\varphi_2)$ and $(\varphi_2,\varphi_3)$.  This relies on the fact that the space of \emph{tracial states} is Choquet simplex, which fails for general states on a non-commutative $\mathrm{C}^*$-algebra.  This means that $\tensordistance$ can be expressed in terms of classical couplings of probability measures on the extreme boundary $\partial_e \mathcal{T}(C(S_n^+)))$, and the cost only depends on the values of $\varphi(u_{ij})$ for $\varphi$ in this extreme boundary.
	
	We relate the three distances to each other and to the classical Hamming distance as follows.
	
	\begin{proposition}  \label{prop: comparison summary} ~
		\begin{enumerate}[(1)]
			\item For traces $\varphi_1$, $\varphi_2 \in \mathcal{T}(C(S_n^+))$, we have
			\[
			\ldistance(\varphi_1,\varphi_2) \leq \freedistance(\varphi_1,\varphi_2) \leq \tensordistance(\varphi_1,\varphi_2).
			\]
			(See Proposition \ref{prop: comparison of distances}.)
			\item We furthermore have $\freedistance(\varphi_1,\varphi_2) \leq \frac{1}{2} \norm{\varphi_1 - \varphi_2}$.  (See Proposition \ref{prop: comparison TV}.)
			\item If $\varphi_1$ and $\varphi_2$ are both induced by classical probability measures $\mu_1$ and $\mu_2$ on $S_n$, then $\ldistance$, $\freedistance$, and $\tensordistance$ all agree with the classical $L^1$ Wasserstein distance of $\mu_1$ and $\mu_2$ with respect to $d_H$.  (See Proposition \ref{prop: recovery of classical distance}.)
			\item For $\varphi_1$, $\varphi_2 \in \mathcal{T}(C(S_n^+))$, each of the three distances is less than or equal to $1$ with equality if and only if $\sum_{i,j=1}^n \varphi_1(u_{ij}) \varphi_2(u_{ij}) = 0$.  (See Proposition \ref{prop: distance one}.)
		\end{enumerate}
	\end{proposition}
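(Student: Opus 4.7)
The plan is to prove the four parts in order, using (1) to set up the comparisons needed in (3) and (4).

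For part (1), I start with $\ldistance \le \freedistance$. For a tracial coupling, set $p_{ij} = \alpha_1(u_{ij})$ and $q_{ij} = \alpha_2(u_{ij})$. Since $p_{ij} \wedge q_{ij}$ is a subprojection of each of $p_{ij}, q_{ij}$, the differences $p_{ij} - p_{ij} \wedge q_{ij}$ and $q_{ij} - p_{ij} \wedge q_{ij}$ are themselves projections, yielding
\[
\|p_{ij} - q_{ij}\|_{L^1} \le \tau(p_{ij}) + \tau(q_{ij}) - 2\tau(p_{ij} \wedge q_{ij}).
\]
Summing over $i,j$ and using $\sum_{i,j}\tau(p_{ij}) = n = \sum_{i,j}\tau(q_{ij})$ (from $\sum_i u_{ij} = 1$) shows the $\ldistance$-integrand is pointwise bounded by the $\freedistance$-integrand on tracial couplings. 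For $\freedistance \le \tensordistance$, any tracial tensor coupling $\varphi$ yields through its tracial GNS representation a tracial coupling in which the images of $\alpha_1$ and $\alpha_2$ commute; then $p_{ij} \wedge q_{ij} = p_{ij} q_{ij}$ and the two integrands coincide.

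For part (2), I use the Choquet-simplex structure on $\mathcal{T}(C(S_n^+))$ to obtain a tracial Hahn--Jordan decomposition $\varphi_1 - \varphi_2 = \psi_+ - \psi_-$ with positive traces $\psi_\pm$ of mass $t := \tfrac{1}{2}\|\varphi_1 - \varphi_2\|$, and set $\rho := \varphi_1 - \psi_+ = \varphi_2 - \psi_-$, a positive trace of mass $1-t$. Taking tracial GNS representations $(M_\bullet, \tau_\bullet, \pi_\bullet)$ of the normalized traces, I construct the tracial coupling
\[
M = M_\rho \oplus (M_+ \otimes M_-), \qquad \tau = (1-t)\tau_\rho \oplus t(\tau_+ \otimes \tau_-),
\]
with $\alpha_1 = \pi_\rho \oplus (\pi_+ \otimes 1)$ and $\alpha_2 = \pi_\rho \oplus (1 \otimes \pi_-)$. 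On the first summand $\alpha_1 = \alpha_2$, so it contributes $\rho\bigl(\sum_{i,j} u_{ij}\bigr) = n(1-t)$ to $\sum_{i,j}\tau(\alpha_1(u_{ij}) \wedge \alpha_2(u_{ij}))$; the second summand contributes a nonnegative quantity. Thus $\freedistance(\varphi_1,\varphi_2) \le 1 - (1-t) = t$.

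For part (3), the direction $\tensordistance \le W_1$ follows by pulling a classical coupling $\pi$ of $\mu_1, \mu_2$ back through the canonical quotient $q: C(S_n^+) \to C(S_n)$; the identity $\sum_{i,j} u_{ij}(\sigma) u_{ij}(\sigma') = |\{j: \sigma(j) = \sigma'(j)\}|$ makes the resulting tensor cost equal to $\int d_H\,d\pi$. The main obstacle is the reverse direction $W_1 \le \ldistance$: given a tracial coupling $(M, \tau, \alpha_1, \alpha_2)$, I first reduce to $\tau$ faithful on the von Neumann algebra generated by $\alpha_1(C(S_n^+))$ and $\alpha_2(C(S_n^+))$ (by cutting down by the central support of $\tau$), and then exploit $\varphi_k|_{\ker q} = 0$ with faithfulness to conclude that each $\alpha_k$ descends to $\tilde\alpha_k: C(S_n) \to M$ with commutative image $B_k \subset M$. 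I define a classical coupling by
\[
\pi(\delta_\sigma \otimes \delta_{\sigma'}) := \tau\bigl(\tilde\alpha_1(\delta_\sigma)\,\tilde\alpha_2(\delta_{\sigma'})\bigr),
\]
nonnegative because $\tau(b_1 b_2) = \tau(b_1^{1/2} b_2 b_1^{1/2}) \ge 0$ for positive $b_k \in B_k$, and with marginals $\mu_1, \mu_2$ by traciality and $\sum_\sigma \tilde\alpha_k(\delta_\sigma) = 1$. Using $\|p - q\|_{L^1} \ge \tau((p-q)^2) = \tau(p) + \tau(q) - 2\tau(pq)$ for projections (valid since $(p-q)^2 \le |p-q|$), summing over $i,j$ shows the $\ldistance$-integrand of the tracial coupling dominates the classical cost of $\pi$, which is at least $W_1(\mu_1, \mu_2)$. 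Combined with (1), all three distances collapse to $W_1(\mu_1, \mu_2)$.

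For part (4), the product state $\varphi_1 \otimes \varphi_2$ is always a tracial tensor coupling with cost $1 - \tfrac{1}{n}\sum_{i,j}\varphi_1(u_{ij})\varphi_2(u_{ij}) \le 1$, bounding all three distances by $1$. When the sum vanishes, each $\varphi_1(u_{ij}) \varphi_2(u_{ij}) = 0$, so for any tracial coupling the estimate $\alpha_1(u_{ij}) \wedge \alpha_2(u_{ij}) \le \alpha_k(u_{ij})$ forces its trace to be at most $\min(\varphi_1(u_{ij}), \varphi_2(u_{ij})) = 0$, giving $\freedistance$-cost $= 1$. For $\ldistance$, the condition $\varphi_k(u_{ij}) = 0$ makes $\alpha_k(u_{ij})$ a projection of trace zero, hence zero in $L^1$, so $\|\alpha_1(u_{ij}) - \alpha_2(u_{ij})\|_{L^1} = \varphi_1(u_{ij}) + \varphi_2(u_{ij})$; summing yields $2n$. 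For $\tensordistance$, the Cauchy--Schwarz estimate $\varphi(u_{ij} \otimes u_{ij}) \le \bigl(\varphi_1(u_{ij})\varphi_2(u_{ij})\bigr)^{1/2} = 0$ applies to any coupling.
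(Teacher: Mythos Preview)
Your proof is correct and follows essentially the same architecture as the paper's, with a few localized differences worth noting.

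In part (2), the paper obtains the tracial Jordan decomposition via the Cuntz--Pedersen result rather than the Choquet-simplex structure; your invocation of the simplex property is a bit loose, since what you actually need is that the \emph{tracial} decomposition $\psi_+ - \psi_-$ achieves $\|\varphi_1-\varphi_2\|_{A^*}$ (equivalently, that the ordinary Jordan parts of a tracial functional are themselves tracial), which is precisely the content of Cuntz--Pedersen. The coupling you build on $M_\rho \oplus (M_+ \overline{\otimes} M_-)$ is identical to the paper's.

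In part (3), the classical coupling you write down, $\pi(\delta_\sigma \otimes \delta_{\sigma'}) = \tau(\tilde\alpha_1(\delta_\sigma)\tilde\alpha_2(\delta_{\sigma'}))$, is exactly what the paper obtains, but the paper justifies its positivity via the left/right representations on $L^2(M,\tau)$ (Connes' joint distribution trick), which works for arbitrary commutative images. Your direct argument $\tau(b_1 b_2)=\tau(b_1^{1/2}b_2 b_1^{1/2})\ge 0$ is simpler and perfectly adequate here because $C(S_n)$ is finite-dimensional, so checking positivity on the atoms suffices. Likewise, your inequality $(p-q)^2 \le |p-q|$ (from $\|p-q\|\le 1$) is a more elementary substitute for the paper's appeal to Powers--St{\o}rmer.

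In part (4) you establish the implication ``sum vanishes $\Rightarrow$ each distance equals $1$'' three times over; the converse, which you do not spell out, follows immediately from your product-state bound $\tensordistance \le 1 - \tfrac{1}{n}\sum_{i,j}\varphi_1(u_{ij})\varphi_2(u_{ij})$ together with the chain $\ldistance \le \freedistance \le \tensordistance$ from part (1). The paper organizes this as a cycle of implications instead.
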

	
	Returning to the theme of quantum compact metric spaces, any distance $d$ on the trace space of a $\Cst$-algebra $A$ induces a Lipschitz seminorm on $A$ given by
	\[
	\norm{a}_{\Lip(d)} = \sup_{\varphi_1 \neq \varphi_2} \frac{|\varphi_1(a) - \varphi_2(a)|}{d(\varphi_1,\varphi_2)}.
	\]
	The domain of Lipschitz seminorm is defined as the set of $a$ where $\norm{a}_{\Lip(d)} < \infty$.
	
	\begin{proposition} \label{prop: dense Lipschitz}
		If $a$ is in the $*$-algebra generated by $\{u_{ij}: i, j \in [n]\}$ in $C(S_n^+)$, then $\norm{a}_{\Lip(\freedistance)} \leq \norm{a}_{\Lip(\ldistance)} < \infty$. So in particular, Lipschitz elements are dense for each of the two metrics. (See Proposition \ref{prop: Lipschitz dense}.)
	\end{proposition}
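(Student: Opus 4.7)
The plan is as follows. The inequality $\norm{a}_{\Lip(\freedistance)} \leq \norm{a}_{\Lip(\ldistance)}$ is purely formal: Proposition \ref{prop: comparison summary}(1) gives $\ldistance \leq \freedistance$ pointwise on pairs of traces, so replacing $\freedistance(\varphi_1,\varphi_2)$ by the smaller denominator $\ldistance(\varphi_1,\varphi_2)$ in the defining supremum only enlarges it.

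The substantive task is to show that $\norm{a}_{\Lip(\ldistance)} < \infty$ whenever $a$ lies in the $*$-algebra generated by the $u_{ij}$. By subadditivity of the Lipschitz seminorm and the self-adjointness $u_{ij}^* = u_{ij}$, it suffices to handle a single monomial $a = u_{i_1 j_1} u_{i_2 j_2} \cdots u_{i_k j_k}$. Fix an arbitrary tracial coupling $(M,\tau,\alpha_1,\alpha_2)$ of two traces $\varphi_1$, $\varphi_2$ and set $p_m = \alpha_1(u_{i_m j_m})$, $q_m = \alpha_2(u_{i_m j_m})$. Expanding via the telescoping identity
\[
p_1 \cdots p_k - q_1 \cdots q_k = \sum_{m=1}^{k} q_1 \cdots q_{m-1}(p_m - q_m) p_{m+1} \cdots p_k,
\]
applying $\tau$, and estimating each summand by the trace-H\"older inequality $|\tau(xyz)| \leq \norm{x}_\infty \norm{y}_{L^1(M,\tau)} \norm{z}_\infty$ together with $\norm{p_l}_\infty, \norm{q_l}_\infty \leq 1$ (since $\alpha_j(u_{il})$ is a projection), one obtains
\[
|\varphi_1(a) - \varphi_2(a)| \leq \sum_{m=1}^k \norm{p_m - q_m}_{L^1(M,\tau)} \leq k \sum_{i,j=1}^n \norm{\alpha_1(u_{ij}) - \alpha_2(u_{ij})}_{L^1(M,\tau)}.
\]
The right-hand side equals $2nk$ times the coupling cost appearing in Definition \ref{def:l1distace}; taking the infimum over couplings yields $\norm{a}_{\Lip(\ldistance)} \leq 2nk$, and subadditivity extends finiteness to every element of the generated $*$-algebra.

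Density of the Lipschitz elements in $(C(S_n^+), \norm{\cdot})$ for either metric is then immediate, since by the universal description of $C(S_n^+)$ the $*$-algebra generated by the $u_{ij}$ is norm-dense, and we have just shown it is contained in the Lipschitz domain for both $\freedistance$ and $\ldistance$. I do not anticipate any serious obstacle: the telescoping identity combined with the trace-H\"older estimate is the standard tool, and the salient simplification is that each generator being a projection automatically supplies the uniform bound $\norm{\alpha_j(u_{ij})}_\infty \leq 1$ needed to peel off factors one at a time.
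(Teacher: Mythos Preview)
Your proposal is correct and follows essentially the same approach as the paper's proof: the inequality $\norm{a}_{\Lip(\freedistance)} \leq \norm{a}_{\Lip(\ldistance)}$ from $\ldistance \leq \freedistance$, then the telescoping identity on a monomial combined with the trace-H{\"o}lder estimate to obtain the bound $2nk$, and finally density from the fact that the $*$-algebra generated by the $u_{ij}$ is norm-dense. The paper routes the intermediate estimate through $\max_{i,j}\norm{\alpha_1(u_{ij})-\alpha_2(u_{ij})}_{L^1}$ before bounding by the full sum, but this is a cosmetic difference yielding the same constant.
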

	
	Our work leaves open many basic questions, which we discuss in \S \ref{sec: questions}, including how the topologies induced by these metrics relate to each other, whether there is a reasonable way to extend the definitions to non-tracial states, and more.  In particular, it highlights the need for a better understanding of the trace space and the representation theory of $C(S_n^+)$.  See \ref{sec: questions} for further discussion.
	
	\subsection{Organization}
	
	The rest of the paper is organized as follows.
	\begin{itemize}
		\item \S \ref{sec: preliminaries} recalls background on $\Cst$-algebras, von Neumann algebras, and quantum groups.
		\item \S \ref{sec: free distance} constructs $\freedistance$ and proves Theorem \ref{thm: free distance summary}.
		\item \S \ref{sec: tensor distance} constructs $\tensordistance$ and proves Theorem \ref{thm: tensor distance summary}.
		\item \S \ref{subsec: comparison} relates the distances to each other and proves Proposition \ref{prop: comparison summary}.
		\item \S \ref{subsec: seminorms} proves Proposition \ref{prop: dense Lipschitz} on Lipschitz seminorms.
		\item \S \ref{sec: questions} gives questions for further research.
	\end{itemize}
	
	\subsection*{Acknowledgements}
	
	We thank the Fields Institute for their hospitality during the Thematic Program on Operator Algebras in Fall 2023, where this collaboration began.  We thank the Institute for Pure and Applied Mathematics for their hospitality during the long program on Noncommutative Optimal Transport in Spring 2025, including travel funding for A and DJ's visits.
	
	We thank Bhishan Jacelon for discussions of metrics on trace spaces and for suggesting to show density of the Lipschitz functions.  We thank Milad Marvian for suggesting to characterize when the distance is equal to $1$.  We thank Peixue Wu for references on quantum Wasserstein distances and Mikael R{\o}rdam for references on trace spaces.  We thank Anna Wysocza{\'n}ska-Kula and Uwe Franz for comments on the draft and discussion of recent work on convolution semigroups.
	
	\subsection*{Funding}
	
	DJ was partially supported by the Danish Independent Research Fund, grant 1026-00371B; and an EU Horizon Marie Sk{\l}odowska Curie Action, FREEINFOGEOM, grant id: 101209517. A was partially supported by the grant ``Logic and $\mathrm{C}^*$-algebras'' from the National Sciences and Engineering Research Council (Canada).
	
	Funded by the European Union.  Views and opinions expressed are those of the author(s) only and do not necessarily reflect 
	those of the European Union or the Research Executive Agency. Neither the European Union nor the granting authority can be held responsible for them.
	
	\section{Preliminaries} \label{sec: preliminaries}
	
	We assume basic familiarity with $\mathrm{C}^*$-algebras and von Neumann algebras; see e.g.\ \cite{Murphy1990,Davidson1996,Blackadar2006}.  Nonetheless, given that our paper spans several subdisciplines, we recall some basic notation and facts here for later use.
	
	\subsection{$\mathrm{C}^*$-algebras}
	
	\subsubsection{States and GNS construction}
	
	All $\Cst$-algebras in this papers are assumed to be unital.  The state space of a $\Cst$-algebra will be denoted $\mathcal{S}(A)$ and the space of tracial states will be denoted $\mathcal{T}(A)$.  These are both weak-$*$ convex and compact spaces.
	
	For a state $\varphi$, we denote by $(H_{\varphi},\pi_{\varphi})$ the \emph{GNS construction} (see e.g.\ \cite[\S 3.1]{Murphy1990}).  Recall the sesquilinear form $\ip{a,b}_\varphi = \varphi(a^*b)$ is nonnegative, and hence defines an inner product on $A / N_\varphi$, where $N_\varphi = \{a: \varphi(a^*a) = 0\}$.  The completion of $A / N_\varphi$ is denoted by $H_\varphi$.  For $a \in A$, we denote by $\widehat{a}$ the corresponding element of $H_\varphi$, i.e.\ the equivalence class $a + N_\varphi$ in $A / N_\varphi$.  Moreover, $\pi_\varphi: A \to B(H_\varphi)$ denotes the representation given by  $\pi_\varphi(x) \widehat{a} = \widehat{xa}$.
	
	In the course of the argument, we use the following facts relating states, $*$-homomorphisms, and GNS representations.
	
	\begin{lemma} \label{lem: quotient GNS isomorphism}
		Let $\theta: A \to B$ be a $*$-homomorphism, and let $\varphi$ be a state on $B$.
		\begin{enumerate}[(1)]
			\item Then $\varphi \circ \theta$ is a state on $A$.
			\item If $\varphi$ is tracial, then $\varphi \circ \theta$ is tracial.
			\item There is an isometric map $U: H_{\varphi \circ \theta} \to H_{\varphi}$ given by $U(\widehat{a}) = \widehat{\theta(a)}$, and for $x \in A$, we have $U \pi_{\varphi \circ \theta}(x) = \pi_{\varphi} \circ \theta(x) U$.
			\item If $\theta$ is surjective, then $U$ is surjective and there is an isomorphism $\pi_{\varphi \circ \theta}(A) \cong \pi_{\varphi}(B)$ given by $T \mapsto UTU^*$.
			\item If $\varphi$ is faithful, then there is an isomorphism $\pi_{\varphi \circ \theta}(A) \to \theta(A)$ given by $\pi_{\varphi \circ \theta}(x) \mapsto \theta(x)$.
		\end{enumerate}
	\end{lemma}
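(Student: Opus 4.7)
The lemma is a compilation of routine checks from the GNS formalism, and the plan is to verify each item from first principles; I do not expect any serious obstacle, only some care with definitions. I tacitly assume $\theta(1_A)=1_B$, as is needed for $\varphi\circ\theta$ to satisfy $\varphi\circ\theta(1)=1$. For (1) and (2), the positivity $(\varphi\circ\theta)(a^*a)=\varphi(\theta(a)^*\theta(a))\ge 0$, the normalization $(\varphi\circ\theta)(1)=\varphi(1)=1$, and the trace identity $(\varphi\circ\theta)(ab)=\varphi(\theta(a)\theta(b))=\varphi(\theta(b)\theta(a))$ all follow at once from $\theta$ being a unital $*$-homomorphism.

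For (3), I would define $U$ on the dense subspace $A/N_{\varphi\circ\theta}\subset H_{\varphi\circ\theta}$ by $\widehat a\mapsto\widehat{\theta(a)}$. The single identity
\[
\|\widehat{\theta(a)}\|_{H_\varphi}^{2}=\varphi(\theta(a^*a))=(\varphi\circ\theta)(a^*a)=\|\widehat a\|_{H_{\varphi\circ\theta}}^{2}
\]
shows simultaneously that this assignment is well-defined modulo $N_{\varphi\circ\theta}$ and that it is isometric, so it extends to an isometry of the completions. The intertwining relation is then a one-line computation: $U\pi_{\varphi\circ\theta}(x)\widehat a=\widehat{\theta(xa)}=\pi_\varphi(\theta(x))U\widehat a$. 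For (4), if $\theta$ is surjective then the image of $U$ contains every $\widehat b$ with $b\in B$, which is dense in $H_\varphi$, so $U$ is unitary. Conjugation by $U$ is then a $*$-isomorphism of $B(H_{\varphi\circ\theta})$ with $B(H_\varphi)$, and by the intertwining it sends $\pi_{\varphi\circ\theta}(A)$ onto $\pi_\varphi(\theta(A))=\pi_\varphi(B)$.

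For (5), I would directly check that $\pi_{\varphi\circ\theta}(x)\mapsto\theta(x)$ is a well-defined bijective $*$-homomorphism; multiplicativity and $*$-preservation are inherited from $\theta$ once well-definedness is shown in both directions. If $\pi_{\varphi\circ\theta}(x)=0$, then evaluating at $\widehat 1\in H_{\varphi\circ\theta}$ gives $\widehat x=0$, so $\varphi(\theta(x)^*\theta(x))=(\varphi\circ\theta)(x^*x)=0$, and faithfulness of $\varphi$ forces $\theta(x)=0$; conversely, $\theta(x)=0$ forces $\pi_{\varphi\circ\theta}(x)\widehat y=\widehat{xy}=0$ for all $y\in A$, since $\theta(xy)=0$ implies $(\varphi\circ\theta)((xy)^*(xy))=0$. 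The only mildly conceptual point, and what I view as the ``main obstacle'' such as it is, is recognizing that faithfulness of $\varphi$ is exactly what is needed to collapse the natural isomorphism $\pi_{\varphi\circ\theta}(A)\cong\pi_\varphi(\theta(A))$ from (3) down to $\theta(A)$ itself.
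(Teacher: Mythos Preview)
Your proof is correct. Parts (1)--(4) match the paper's argument essentially verbatim. For (5), you take a slightly different and more direct route: you show directly that $\ker(\pi_{\varphi\circ\theta})=\ker(\theta)$ by the two kernel-inclusion checks, which immediately gives the isomorphism $\pi_{\varphi\circ\theta}(A)\cong A/\ker(\theta)\cong\theta(A)$. The paper instead works on the Hilbert-space side, defining a compression map $\alpha:\theta(A)\to B(\operatorname{Ran}U)$ by $y\mapsto yUU^*$ and a conjugation map $\beta:\pi_{\varphi\circ\theta}(A)\to B(\operatorname{Ran}U)$ by $T\mapsto UTU^*$, showing both are injective with the same image, and composing $\alpha^{-1}\circ\beta$. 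Your kernel argument is shorter and more transparent; the paper's approach has the minor advantage of making the spatial implementation via $U$ explicit, which ties (5) more visibly to (3) and (4), but the content is the same.
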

	
	\begin{proof}
		(1) and (2) are immediate.
		
		(3) Note that
		\[
		\ip{a,a'}_{\varphi \circ \theta} = \varphi \circ \theta(a^*a') = \varphi(\theta(a)^* \theta(a')) = \ip{\theta(a),\theta(a')}_\varphi.
		\]
		Hence, the mapping $\theta$ is isometric with respect to the Hilbert space norms, and so gives a well-defined and isometric operator $U: H_{\varphi \circ \theta} \to H_{\varphi}$.  Next, note that for $x, a \in A$, we have
		\[
		U \pi_{\varphi \circ \theta}(x) \widehat{a} = U \widehat{xa} = \widehat{\theta(xa)} = \widehat{\theta(x) \theta(a)} = \pi_{\varphi}(\theta(x)) \widehat{\theta(a)} = \pi_{\varphi}(\theta(x)) U \widehat{a}.
		\]
		Hence, $U \pi_{\varphi \circ \theta}(x) = \pi_{\varphi}(\theta(x)) U$ as desired.
		
		(4) If $\theta$ is surjective, the image of $U$ is dense in $H_{\varphi}$, so being an isometry $U$ is also surjective.  We then have $U \pi_{\varphi \circ \theta}(x) U^* = \pi_{\varphi}(\theta(x))$ which yields the asserted isomorphism.
		
		(5) In general, $\Ran(U)$ is the closure of $\widehat{\theta(a)}$ for $a \in A$ (the range again is closed because $U$ is an isometry).  Note that this is an invariant subspace with respect to $\pi_{\varphi}(\theta(A))$.  Therefore, we obtain a $*$-homomorphism $\alpha: \theta(A) \to B(\Ran(U))$ given by $\theta(x) \mapsto \theta(x) UU^* = UU^* \theta(x)$. The map $\alpha$ is injective because if $\alpha(y) = 0$, then $y UU^* \widehat{1} = y \widehat{1} = \widehat{y} = 0$, which implies that $y = 0$ by faithfulness of $\varphi$.  Hence, $\alpha^{-1}$ is a well-defined $*$-homomorphism.
		
		Now consider the map $\beta: \pi_{\varphi \circ \theta}(A) \to B(\Ran U)$ given by $\beta(T) = UTU^*$.  This is a $*$-homomorphism since $U^*U = 1$ and it is unital since $UU^*$ is the identity on $\Ran(U)$.  Moreover, $\beta$ is a $*$-isomorphism onto its image since $U^* \beta(T) U = T$.  By (1), we obtain
		\[
		\beta(\pi_{\varphi \circ \theta}(x)) = U \pi_{\varphi \circ \theta}(x) U^* = \pi_{\varphi}(\theta(x)) UU^* = \alpha(\theta(x)).
		\]
		Therefore, $\alpha^{-1} \circ \beta$ gives a $*$-isomorphism $\pi_{\varphi \circ \theta}(A) \to \theta(A)$.
	\end{proof}

	\subsubsection{Universal $\Cst$-algebras, free products, and tensor products} \label{subsec: C* tensor products}
	
	Next, we discuss universal $\Cst$-algebras given by generators and relations (see \cite{Blackadar1985}).  For a given set of generators $\mathcal{G} = \{ x_{\alpha}\}_{\alpha}$ and a set of relations $\mathcal{R}$ where the relation $r$ is allowed in $\mathcal{R}$ if $r$ is given as the following 
	\[ 
	\| p (x_{\alpha_1}, \hdots, x_{\alpha_n}, x_{\alpha_1}^*, \hdots ,x_{\alpha_n}^* )
	\| \leq \eta \]
	where $p \in \C \langle y_1, \hdots, y_{2n} \rangle$, the variables $y_i$'s do not commute and $\eta \geq 0$. 
	
	A representation $\phi$ of $(\mathcal{G}, \mathcal{R})$ is a set of operators $\{y_{\alpha}\}_{\alpha}$ in $B(H)$ for a Hilbert space $H$ such that $ \{y_{\alpha} = \phi(x_{\alpha}) \}_{\alpha} $ satisfy the relations $\mathcal{R}$. Consider the free $*$-algebra $\mathcal{F(G)}$ generated by $\mathcal{G}$ and $\mathcal{R}$.  The map $\phi$ induces a unique $*$-homomorphism
	\[
	\tilde{\phi} : \mathcal{F(G)} \to B(H).
	\]
	For a given set of generators and relations $(\mathcal{G},\mathcal{R})$, we can construct $\Cst(\mathcal{G,R})$ as long as it is \emph{admissible}, that is, if there is a representation of $(\mathcal{G}, \mathcal{R})$ and whenever $\{\phi_j, H_j\}$ is a family of representations of $(\mathcal{G}, \mathcal{R})$, then $\bigoplus_j {\phi_j}$ is a representation of $(\mathcal{G}, \mathcal{R})$ on $\bigoplus_j{ H_j}$.  For any $f \in \mathcal{F(G)}$, the norm is defined by $\|f\| : = \sup_j \{\|\phi_j(f) \| : \phi_j \text{ is a representation of } (\mathcal{G}, \mathcal{R})\}$.  The above two conditions imply that $\|f\| < \infty$ and is a $\Cst$-seminorm on $\mathcal{F(G)}$. Now the universal $\Cst$-algebra for $(\mathcal{G}, \mathcal{R})$, denoted by  $\Cst(\mathcal{G}, \mathcal{R})$, is defined as the completion of $\mathcal{F(G)}/\{f \in \mathcal{F(G)} : \|f \| = 0\}$.
	
	Given any two unital $\Cst$-algebras $A$ and $B$, the \emph{universal} or \emph{full free product} $A * B$ is the universal unital $\Cst$-algebra generated by copies of $A$ and $B$ \cite{Avitzour}.  It is constructed so that for any two representations $\pi_1$ and $\pi_2$ on a Hilbert space $H$, there is a unique representation of $\pi$ on $H$ extending $\pi_1$ and $\pi_2$.
	
	The \emph{maximal tensor product} $A \otimes_{\max} B$ of $\Cst$-algebras is the universal $\Cst$-algebra generated by commuting copies of $A$ and $B$ (see \cite[Theorem 6.3.5]{Murphy1990}).  It is constructed so that the images $A \otimes 1$ and $1 \otimes B$ commutes, and for any two representations $\pi_1$ and $\pi_2$ on a Hilbert space $H$ such that $\pi_1(A)$ and $\pi_2(B)$ commute, there is a unique representation of $\pi$ on $H$ extending $\pi_1$ and $\pi_2$. 
	
	The \emph{minimal tensor product} $A \otimes_{\min} B$ is the $\mathrm{C}^*$-algebra generated by the tensor product of given faithful representations of $A$ and $B$ respectively (see \cite[Theorem 6.3.3]{Murphy1990}).  Due to its common usage with quantum groups, we denote the minimal tensor product of $\mathrm{C}^*$-algebras simply by $\otimes$.
	
	\subsection{Von Neumann algebras}
	
	\subsubsection{Basic properties} \label{subsec: vN basic}
	
	A \emph{von Neumann algebra} is defined as a unital $*$-subalgebra $M \subseteq B(H)$ closed in the weak operator topology (WOT).  A state $\varphi$ on a von Neumann algebra $M$ is \emph{normal} if $\varphi$ is WOT-continuous on $(M)_1$.  A \emph{tracial von Neumann algebra} is a pair $(M,\tau)$ where $M$ is a von Neumann algebra and $\tau$ is a faithful, normal, tracial state.  We recall that any $\Cst$-algebra with a state gives rise to a von Neumann algebra via the GNS construction. The following statement is basic and well known but we were unable to find a specific reference, so we include the proof here.
	
	\begin{lemma} \label{lem: GNS tracial von Neumann algebra}
		Let $A$ be a $\mathrm{C}^*$-algebra, and let $\varphi \in \mathcal{T}(A)$.  Let $(H_\varphi,\pi_{\varphi})$ be the GNS construction of $(A, \varphi)$, let $M = \pi_{\varphi}(A)''$ be the von Neumann algebra generated by $\pi_{\varphi}(A)$, and let $\tau(x) = \ip{\widehat{1}, x \widehat{1}}$ for $x \in M$.  Then $(M,\tau)$ is a tracial von Neumann algebra.
	\end{lemma}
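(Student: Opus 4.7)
My plan is to verify the four properties of a tracial von Neumann algebra in turn: that $M$ is a von Neumann algebra, and that $\tau$ is a state that is normal, tracial, and faithful. That $M = \pi_\varphi(A)''$ is a von Neumann algebra is immediate from the bicommutant theorem. That $\tau$ is a state follows because it is a vector state: $\tau(1) = \|\widehat{1}\|^2 = \varphi(1) = 1$, and $\tau(x^*x) = \|x\widehat{1}\|^2 \geq 0$. Normality follows because any vector state is WOT-continuous on all of $M$, not just on the unit ball.

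Next I would prove traciality. For $x = \pi_\varphi(a)$ and $y = \pi_\varphi(b)$ with $a,b \in A$, one has
\[
\tau(xy) = \ip{\widehat{1},\pi_\varphi(ab)\widehat{1}} = \varphi(ab) = \varphi(ba) = \tau(yx),
\]
using that $\varphi$ is tracial. To extend to $x,y \in M$, fix $x \in \pi_\varphi(A)$ and observe that both maps $y \mapsto \tau(xy) = \ip{x^*\widehat{1}, y\widehat{1}}$ and $y \mapsto \tau(yx) = \ip{\widehat{1}, y(x\widehat{1})}$ are matrix coefficients, hence WOT-continuous on all of $M$. Since $\pi_\varphi(A)$ is WOT-dense in $M$, the identity $\tau(xy) = \tau(yx)$ persists for $x \in \pi_\varphi(A)$, $y \in M$. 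Swapping the roles of $x$ and $y$ and running the same density argument a second time yields traciality on all of $M$.

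Finally I would verify faithfulness by showing $\widehat{1}$ is separating. Suppose $x \in M$ satisfies $\tau(x^*x) = 0$, so $x\widehat{1} = 0$. I claim $x\widehat{a} = 0$ for every $a \in A$, which suffices because $\{\widehat{a} : a \in A\}$ is dense in $H_\varphi$. For $a, b \in A$, traciality of $\tau$ and the definition of the GNS representation give
\[
\ip{\widehat{b}, x\widehat{a}} = \tau(\pi_\varphi(b)^* x \pi_\varphi(a)) = \tau(\pi_\varphi(ab^*) x) = \ip{\pi_\varphi(ab^*)^* \widehat{1}, x\widehat{1}} = 0.
\]
Since $\{\widehat{b}\}$ is also dense, $x\widehat{a} = 0$, completing the argument.

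The only subtle step is traciality, since one must be careful to use a form of continuity that actually works on the unbounded set $M$; I expect the cleanest formulation is the observation above that $y \mapsto \tau(xy)$ and $y \mapsto \tau(yx)$ are both matrix coefficients in the GNS representation, making WOT-density of $\pi_\varphi(A)$ in $M$ sufficient and avoiding any appeal to Kaplansky density.
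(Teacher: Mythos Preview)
Your proof is correct and follows the same overall outline as the paper (normality via vector state, traciality by density, faithfulness by showing $\widehat{1}$ is separating), but the execution diverges in one noteworthy place.

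For faithfulness, the paper explicitly constructs the right regular representation $\rho(a)\widehat{b} = \widehat{ba}$, verifies it is bounded using traciality of $\varphi$, and then uses the commutation $\rho(A) \subseteq \pi_\varphi(A)' \subseteq M'$ to conclude that $x\widehat{a} = x\rho(a)\widehat{1} = \rho(a)x\widehat{1} = 0$. You instead bypass the construction of $\rho$ entirely by invoking the traciality of $\tau$ on all of $M$ (which you have already established) to move $\pi_\varphi(a)$ past $x$ inside the trace. This is a legitimate and slightly more economical route: it reuses work already done rather than introducing a new object. The paper's approach, on the other hand, is more structural and yields the right representation as a byproduct, which is used again later in the paper (e.g.\ in \S\ref{subsec: vN basic} and the proof of Proposition~\ref{prop: recovery of classical distance}). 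For the traciality step your matrix-coefficient formulation is cleaner than the paper's iterated-net argument but amounts to the same thing.
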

	
	\begin{proof}
		First, note that $\tau$ is a normal state on $M$ because it is a vector state.  Moreover, by virtue of the GNS construction, we have $\tau \circ \pi_{\varphi} = \varphi$. 
		In particular, $\tau$ is tracial on $\pi_{\varphi}(A)$.  Now if $x, y \in M$, fix nets $x_i$ and $y_j$ converging in the strong operator topology to $x$ and $y$.  Then
		\[
		\tau(xy) = \lim_{i} \tau(x_iy) = \lim_i \lim_j \tau(x_i y_j) = \lim_i \lim_j \tau(y_j x_i) = \lim_i \tau(yx_i) = \tau(yx).
		\]
		For the faithfulness of $\tau$, we use the right representation of $A$ given by $\rho(a) \widehat{b} = \widehat{ba}$.  To show that this is well-defined, note that
		\[
		\norm{\widehat{ba}}_{H_\varphi}^2 = \tau(a^*b^*ba) = \tau(baa^*b^*) \leq \norm{aa^*} \tau(bb^*) = \norm{a^*}^2 \tau(b^*b) = \norm{a}^2 \norm{\widehat{b}}_{H_\varphi}^2.
		\]
		Therefore, right multiplication by $a$ passes to a well-defined bounded operator on $L^2(H_\varphi)$.  It is easy to verify that the left multiplication by $a \in A$ and the right multiplication by $a' \in A$ commute. In particular, $\rho(A) \subseteq \pi_{\varphi}(A)'$, and so $\pi_{\varphi}(A)''$ commutes with $\rho(A)$.  Therefore, if $x \in \pi_{\varphi}(A)$ and $\tau(x^*x) = 0$, this means that $x \widehat{1} = 0$.  For $a \in A$, we have $0 = \rho(a) x \widehat{1} = x \rho(a) \widehat{1} = x \widehat{a}$.  Thus, $x$ is zero on a dense subset of $H_\varphi$, and so $x = 0$.  Therefore, $\tau$ is faithful.  Thus, $(M,\tau)$ is a tracial von Neumann algebra as desired.
	\end{proof}
	
	The GNS construction for a tracial von Neumann algebra $M \subseteq B(H)$ gives a standard representation which for most purposes can be used instead of the given representation on $H$.  We recall a few standard facts.  See e.g.\ \cite[\S 1.2]{JonesSunder1997}.  If $M \subseteq B(H)$ is a von Neumann algebra with faithful normal tracial state $\tau$, let $L^2(M,\tau) = H_\tau$ be the associated GNS space.  Then the GNS representation $\lambda: M \to B(L^2(M,\tau))$ is faithful, the image $\lambda(M)$ is a tracial von Neumann algebra, $\lambda$ defines a WOT homeomorphism from $M_1$ onto $\lambda(M)_1$.  There is also a right multiplication action $\rho(x)$ given by $\rho(x) \widehat{y} = \widehat{yx}$, which defines a $*$-homomorphism $M^{\operatorname{op}} \to B(L^2(M))$, where $M^{\operatorname{op}}$ is the algebra with reversed multiplication.  Moreover, $\lambda(M)$ and $\rho(M)$ commute, and in fact $\lambda(M)' = \rho(M)$ \cite[Theorem 1.2.4]{JonesSunder1997}.  In particular, $\lambda$ and $\rho$ produce a representation $\lambda \otimes \rho$ of $M \otimes_{\max} M^{\operatorname{op}}$ on $B(L^2(M,\tau))$.
	
	We remark that a tracial von Neumann algebra is determined up to isomorphism by the evaluation of the trace on $*$-polynomials in the generators.  A \emph{$*$-polynomial} in variables $(x_i)_{i \in I}$ is a linear combination of products of the terms $x_i$ and $x_i^*$, for instance, $x_1 x_2^* + 3 x_2^2 x_1^*x_3$.  The following is another folklore result that has not adequately been written down, so we include the proof here.
	
	\begin{lemma} \label{lem: law isomorphism}
		Let $(M,\tau)$ and $(\tilde{M},\tilde{\tau})$ be tracial von Neumann algebras.  Let $(x_i)_{i \in I}$ and $(\tilde{x}_i)_{i \in I}$ be elements of $M$ and $\tilde{M}$ respectively.  Suppose $N$ and $\tilde{N}$ be the von Neumann subalgebras generated by $(x_i)_{i \in I}$ and $(\tilde{x}_i)_{i \in I}$ respectively.  Suppose that for every $*$-polynomial, we have $\tau(p((x_i)_{i \in I})) = \tilde{\tau}(\tilde{p}((\tilde{x}_i)_{i \in I}))$.  Then there is a $*$-isomorphism $\pi: N \to \tilde{N}$ such that $\pi(x_i) = \tilde{x}_i$ and $\pi$ gives a WOT-homeomorphism $(N)_1$ to $(\tilde{N})_1$.
	\end{lemma}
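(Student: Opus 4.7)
The plan is to build the isomorphism by first producing a $*$-isomorphism of the underlying $*$-algebras generated by the $x_i$ and $\tilde x_i$, then extending it through the GNS picture. Let $A \subseteq N$ be the (unital) $*$-algebra of $*$-polynomials in the $x_i$, and let $\tilde A \subseteq \tilde N$ be defined analogously. Consider the polynomial-substitution map $\pi_0: A \to \tilde A$ sending $p((x_i)_{i\in I})$ to $p((\tilde x_i)_{i\in I})$. To see that $\pi_0$ is well-defined, observe that if $p((x_i)) = 0$, then $\tau(p((x_i))^* p((x_i))) = 0$, which by hypothesis equals $\tilde\tau(p((\tilde x_i))^* p((\tilde x_i)))$, so faithfulness of $\tilde\tau$ forces $p((\tilde x_i)) = 0$. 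By symmetry $\pi_0$ is a $*$-isomorphism of $A$ onto $\tilde A$, and it is trace-preserving by assumption.

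Next I would pass to the standard representations on $L^2(N,\tau|_N)$ and $L^2(\tilde N,\tilde\tau|_{\tilde N})$. Since $\tau(a^*a) = \tilde\tau(\pi_0(a)^*\pi_0(a))$ for $a \in A$, the prescription $\widehat{a} \mapsto \widehat{\pi_0(a)}$ gives an isometry on a dense subspace; here density of $\widehat{A}$ in $L^2(N,\tau|_N)$ follows from the Kaplansky density theorem applied to $A \subseteq N$, which gives a bounded SOT-approximation of any element of $N$ and hence an $L^2$-approximation of $\widehat{x}$ for $x \in N$. Symmetry gives surjectivity, so we obtain a unitary $U: L^2(N,\tau|_N) \to L^2(\tilde N,\tilde\tau|_{\tilde N})$.

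Now I would define $\pi: N \to \tilde N$ by $\pi(x) = UxU^*$, where $N$ and $\tilde N$ are identified with their faithful GNS representations (the preliminary discussion in \S\ref{subsec: vN basic} ensures this identification preserves WOT on bounded sets). The crucial point—and the step I expect to be the main obstacle—is to verify that $UxU^*$ actually lies in $\tilde N$ for every $x \in N$, not merely in $B(L^2(\tilde N,\tilde\tau|_{\tilde N}))$. For $x \in A$ this is clear, since one computes directly that $\pi(a)\widehat{\pi_0(b)} = U(ab)\widehat{\phantom{a}} = \widehat{\pi_0(a)\pi_0(b)}$, so $\pi$ restricts to $\pi_0$ on $A$. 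For general $x \in N$ with $\|x\| \leq 1$, Kaplansky density provides a net $a_\lambda \in A$ with $\|a_\lambda\| \leq 1$ and $a_\lambda \to x$ in SOT. Then $UxU^*$ is the SOT-limit of $Ua_\lambda U^* = \pi_0(a_\lambda) \in \tilde A \subseteq \tilde N$, and SOT-closedness of $\tilde N$ gives $\pi(x) \in \tilde N$.

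Finally, the same argument with the roles of $N$ and $\tilde N$ swapped (and $U$ replaced by $U^*$) produces a two-sided inverse, so $\pi$ is a $*$-isomorphism with $\pi(x_i) = \tilde x_i$. The WOT-homeomorphism property on unit balls is automatic: conjugation by the unitary $U$ is WOT-continuous on bounded sets in either direction, since $\langle Ux_\lambda U^* \xi, \eta\rangle = \langle x_\lambda U^*\xi, U^*\eta\rangle$, and a bounded WOT-convergent net remains WOT-convergent after taking the inner product with any two vectors.
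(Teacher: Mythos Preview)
Your proposal is correct and follows essentially the same route as the paper: build a unitary $U$ between the GNS/standard $L^2$ spaces from the equality of tracial moments, and then transport $N$ to $\tilde N$ by $\operatorname{ad}_U$. The only cosmetic difference is in how you verify $UNU^* \subseteq \tilde N$: you invoke Kaplansky density to SOT-approximate an arbitrary $x\in (N)_1$ by elements of $A$, whereas the paper simply observes that $\operatorname{ad}_U$ is a WOT-homeomorphism $B(L^2(N,\tau))\to B(L^2(\tilde N,\tilde\tau))$ sending the generating set $(x_i)$ to $(\tilde x_i)$, hence mapping the von Neumann algebra they generate onto the corresponding one on the other side.
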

	
	\begin{proof}
		Since the standard representation of $N$ on $L^2(N,\tau|_N)$ is faithful and gives the same WOT-topology on the unit ball, it suffices to prove the case where $M = N$ and $\tilde{M} = \tilde{N}$, and we may assume without loss of generality that $N$ and $\tilde{N}$ are subalgebras of $B(L^2(N,\tau))$ and $B(L^2(\tilde{N},\tilde{\tau})$ respectively.
		
		By the bicommutant theorem, $N$ is the SOT-closure of $p((x_i)_{i \in I})$ for $*$-polynomials $p$.  In particular, $\widehat{p((x_i)_{i \in I})}$ is dense in $L^2(N,\tau)$.  The analogous statement holds for $\tilde{N}$.  By assumption, we have $\tau[p((x_i)_{i \in I})^* q((x_i)_{i \in I})] = \tilde{\tau}[p((\tilde{x}_i)_{i \in I})^* q((\tilde{x}_i)_{i \in I})]$.  Therefore, there is a unitary isomorphism $U: L^2(N,\tau) \to L^2(\tilde{N},\tilde{\tau})$ satisfying $U[\widehat{p((x_i)_{i \in I})}] = U[\widehat{p((\tilde{x}_i)_{i\in I})}]$.  It is immediate that
		\[
		U \lambda(x_i) \widehat{p((x_i)_{i \in I})} = \lambda(\tilde{x}_i) U \widehat{p((x_i)_{i \in I})},
		\]
		and therefore $\lambda(\tilde{x}_i) = U\lambda(x_i) U^*$. Thus, the map $\operatorname{ad}_U: T \mapsto UTU^*$ sends the $*$-algebra generated by $(x_i)_{i \in I}$ to the $*$-algebra generated by $(\tilde{x}_i)_{i \in I}$.  The map $\operatorname{ad}_U$ is isometric with respect to operator norm and also a WOT-homeomorphism $B(L^2(N,\tau)) \to B(L^2(\tilde{N},\tilde{\tau}))$.  Hence, it maps the von Neumann algebra generated by $(x_i)_{i \in I}$ onto the von Neumann algebra generated by $(\tilde{x}_i)_{i \in I}$, or it maps $N$ onto $\tilde{N}$.
	\end{proof}
	
	In order to define $\ldistance$, we use the non-commutative $L^1$ space associated to a tracial von Neumann algebra.  We denote by $L^1(M,\tau)$ the completion of $M$ with respect to the norm $\norm{x}_{L^1(M,\tau)} = \tau(|x|)$ where $|x| = (x^*x)^{1/2}$.  There is a natural inclusion of $L^2(M,\tau)$ into $L^1(M,\tau)$.  In fact, one can define non-commutative $L^p$ spaces $L^p(M,\tau)$ for every $p \in [1,\infty]$, where $L^\infty(M,\tau)$ is $M$ itself.  These all reside within the algebra of affiliated operators and satisfy the non-commutative H{\"o}lder's inequality $\norm{xy}_{L^p(M,\tau)} \leq \norm{x}_{L^{p_1}(M,\tau)} \norm{y}_{L^{p_2}(M,\tau)}$ when $1/p = 1/p_1 + 1/p_2$.  However, we only need this for the values $1$, $2$, and $\infty$.  For background on non-commutative $L^p$ spaces, see e.g.\ \cite{PisierXu2003}.
	
	\subsubsection{Tensor products and amalgamated free products of von Neumann algebras}  \label{subsec: amalgamated free products}
	
	Given two tracial von Neumann algebras $(M,\tau)$ and $(N,\sigma)$, one defines their von Neumann algebraic tensor product $(M \overline{\otimes} N, \tau \otimes \sigma)$ as follows.  Consider $M \subseteq B(L^2(M,\tau))$ and $N \subseteq B(L^2(N,\sigma))$ using the standard left multiplication representation.  Then form the minimal $\mathrm{C}^*$ tensor product $M \otimes N$ on $L^2(M,\tau) \otimes L^2(N,\sigma)$, and let $M \overline{\otimes} N$ be the WOT-closure of $M \otimes N$ in $B(L^2(M,\tau) \otimes L^2(N,\sigma))$.  The vector $\widehat{1}_M \otimes \widehat{1}_N$ defines a normal tracial state on $M \otimes N$ which sends $a \otimes b$ to $\tau(a) \sigma(b)$.  Moreover, since $A \odot B$ is WOT-dense in $A \overline{\otimes} B$, there is only one normal state on $A \overline{\otimes} B$ that gives $\tau \otimes \sigma$ on $A \odot B$.  Faithfulness of $\tau \otimes \sigma$ on $M \overline{\otimes} N$ follows using the right multiplication action of $A \otimes B$ as in the proof of Lemma \ref{lem: GNS tracial von Neumann algebra}.
	
	As in \cite{BianeVoiculescu2001}, in order to show the triangle inequality for free Wasserstein distances, we need to be able to embed any two given tracial von Neumann algebras $(M_1,\tau_1)$ and $(M_2,\tau_2)$ into a larger one $(M,\tau)$, and to arrange that the embeddings of $M_1$ and $M_2$ agree on a common von Neumann subalgebra.  One construction that accomplishes this is the amalgamated free product.  We recall first the existence of trace-preserving conditional expectations.
	
	\begin{fact}[{See e.g.\ \cite[\S 3.1]{JonesSunder1997}}]
		Let $(M,\tau)$ be a tracial von Neumann algebra and let $N$ be a von Neumann subalgebra.  Then for each $x \in M$, there is a unique $E_N(x) \in N$ such that $\tau(xy) = \tau(E_N(x)y)$ for all $y \in N$.
		
		The map $E_N$ is called the \emph{trace-preserving conditional expectation} from $M$ to $N$.
	\end{fact}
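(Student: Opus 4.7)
The approach is to realize $E_N(x)$ as the Hilbert-space compression of left multiplication by $x$ onto $L^2(N,\tau|_N)\subseteq L^2(M,\tau)$, and then invoke the commutation theorem for the standard form (recorded in \S\ref{subsec: vN basic}) to see that this compression is itself left multiplication by an element of $N$. Concretely, let $\lambda:M\to B(L^2(M,\tau))$ be the standard representation, let $K\subseteq L^2(M,\tau)$ be the closure of $\widehat N$ (canonically identified with $L^2(N,\tau|_N)$), and let $e_N:L^2(M,\tau)\to K$ be the orthogonal projection.

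Uniqueness is immediate: if $y_1,y_2\in N$ satisfy $\tau(y_1y)=\tau(y_2y)$ for all $y\in N$, then setting $y=(y_1-y_2)^*$ and applying faithfulness of $\tau$ forces $y_1=y_2$.

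For existence, define $T_x:=e_N\lambda(x)e_N$, viewed as a bounded operator on $K$ of norm at most $\|x\|$. The key step is to check that $T_x$ commutes with the right-multiplication action $\rho(N)$ on $K$: right multiplication by $n\in N$ preserves both $\widehat N$ and its orthogonal complement in $L^2(M,\tau)$ (the latter because $\ip{\rho(n)\xi,\widehat{n'}}=\ip{\xi,\widehat{n'n^*}}=0$ for $\xi\perp\widehat N$ and $n'\in N$), hence commutes with $e_N$; it also commutes with $\lambda(x)$ since left and right multiplications on $L^2(M,\tau)$ commute. Invoking the commutation theorem $\lambda(N)'=\rho(N)$ on $L^2(N,\tau|_N)$ then gives $T_x\in\lambda(N)$, i.e.\ $T_x=\lambda(E_N(x))$ for a unique $E_N(x)\in N$ with $\|E_N(x)\|\le\|x\|$. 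The trace identity is now routine: for $y\in N$, using $\widehat1,\widehat y\in K$,
\[
\tau(xy)=\ip{\widehat1,\lambda(x)\widehat y}=\ip{\widehat1,e_N\lambda(x)e_N\widehat y}=\ip{\widehat1,\lambda(E_N(x))\widehat y}=\tau(E_N(x)y).
\]

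The only genuinely substantive input is the commutation theorem $\lambda(N)'=\rho(N)$ on the standard form of $N$, which is classical Murray--von Neumann content and was already recalled in the preliminaries; the remaining work consists of bookkeeping with the projection $e_N$ together with faithfulness of $\tau$.
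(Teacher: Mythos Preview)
Your argument is correct and is essentially the standard Jones--Sunder proof that the paper cites; the paper itself does not supply a proof of this Fact, only the reference to \cite[\S 3.1]{JonesSunder1997}. The one point worth making explicit is that you are using $\rho(N)'=\lambda(N)$ on $L^2(N,\tau|_N)$, which follows from the stated $\lambda(N)'=\rho(N)$ together with the bicommutant theorem, but otherwise every step checks out.
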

	
	\begin{definition}[Free independence with amalgamation]
		Let $(M,\tau)$ be a tracial von Neumann algebra and $N$ a von Neumann subalgebra.  For $i \in I$, let $M_i$ be a von Neumann subalgebra with $N \subseteq M_i \subseteq M$.  We say that $(M_i)_{i \in I}$ are \emph{freely independent with amalgamation over $N$} if whenever $i_1 \neq i_2 \neq \cdots \neq i_k$ are indices in $I$ where consecutive indices are not equal, and $x_j \in M_{i_j}$ with $E_N[x_j] = 0$, then
		\[
		E_N[x_{i_1} \dots x_{i_k}] = 0.
		\]
	\end{definition}
	
	\begin{fact}[Amalgamated free product]
		Let $(M_i,\tau_i)_{i \in I}$ and $(N,\sigma)$ be given tracial von Neumann algebras, and let $\iota_i: N \to M_i$ be a trace-preserving $*$-homomorphism.  Then there exists a tracial von Neumann algebra $(M,\tau)$ and trace-preserving $*$-homomorphisms $\gamma_i: M_i \to M$ and $\gamma: N \to M$ such that
		\begin{itemize}
			\item $\gamma_i \circ \iota_i = \gamma$ for all $i \in I$.
			\item $M$ is generated by $\gamma_i(M_i)$ for $i \in I$.
			\item The subalgebras $\gamma_i(M_i)$ for $i \in I$ are freely independent with amalgamation over $\gamma(N)$.
		\end{itemize}
		Moreover, $M$ is unique up to isomorphism in the sense that if $(\tilde{M},\tilde{\gamma}_i,\tilde{\gamma})$ also satisfy this, then there is a unique isomorphism $\Phi: M \to \tilde{M}$ with $\Phi \circ \gamma_i = \tilde{\gamma}_i$.
	\end{fact}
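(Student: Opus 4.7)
The plan is to realize $(M,\tau)$ as a von Neumann algebra acting on the amalgamated free-product Hilbert bimodule, following Voiculescu's standard construction. For each $i \in I$, the trace-preserving embedding $\iota_i \colon N \to M_i$ turns $L^2(M_i,\tau_i)$ into an $N$-$N$ bimodule, and the trace-preserving conditional expectation onto $\iota_i(N)$ yields an orthogonal decomposition of $N$-$N$ bimodules $L^2(M_i,\tau_i) = L^2(N,\sigma) \oplus H_i^\circ$. Form the Hilbert $N$-bimodule
\[
H = L^2(N,\sigma) \;\oplus\; \bigoplus_{k \geq 1,\; i_1 \neq i_2 \neq \cdots \neq i_k} H_{i_1}^\circ \otimes_N H_{i_2}^\circ \otimes_N \cdots \otimes_N H_{i_k}^\circ,
\]
using the Connes relative tensor product over $N$. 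For each $i$, the identification $H = L^2(M_i,\tau_i) \otimes_N K_i$, where $K_i$ consists of $L^2(N,\sigma)$ together with the words whose first letter lies in some $H_{i'}^\circ$ with $i' \neq i$, defines a unital $*$-homomorphism $\lambda_i \colon M_i \to B(H)$ by left multiplication on the first tensorand, followed by re-splitting $L^2(M_i) = L^2(N) \oplus H_i^\circ$. Set $\lambda := \lambda_i \circ \iota_i$, which is visibly independent of $i$; let $M$ be the WOT-closure of $\bigcup_i \lambda_i(M_i)$ in $B(H)$, put $\gamma_i := \lambda_i$ and $\gamma := \lambda$, and define $\tau(x) = \langle \Omega, x\Omega\rangle$ for the distinguished unit vector $\Omega \in L^2(N,\sigma) \subset H$.

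The main obstacle is showing that $\tau$ is a faithful, normal, tracial state on $M$. Normality is automatic and trace-preservation of each $\gamma_i$ and $\gamma$ is visible from the first-letter decomposition. Free independence with amalgamation over $\gamma(N)$ is built into the construction: if $E_N(x_j) = 0$ and $i_1 \neq i_2 \neq \cdots \neq i_k$, then $\lambda_{i_1}(x_1)\cdots\lambda_{i_k}(x_k)\Omega$ lies in the summand $H_{i_1}^\circ \otimes_N \cdots \otimes_N H_{i_k}^\circ$, which is orthogonal to $\Omega$. The delicate point is traciality: one constructs a symmetric right action $\rho_i \colon M_i^{\op} \to B(H)$ and a canonical antiunitary involution $J$ on $H$ that reverses tensor words and applies the modular conjugation letterwise, and verifies $J\Omega = \Omega$, $J\lambda_i(a)J = \rho_i(a^*)$, and $[\lambda_i(M_i), \rho_j(M_j)] = 0$; then $Jx\Omega = x^*\Omega$ for every $x$ in the $*$-algebra generated by $\bigcup_i \lambda_i(M_i)$, whence $\tau(xy) = \langle \Omega, xy\Omega\rangle = \langle Jy\Omega, x\Omega\rangle = \langle y^*\Omega, x\Omega\rangle = \tau(yx)$, extended to all of $M$ by normality. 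Faithfulness then follows because $x\Omega = 0$ forces $x\rho_i(a)\Omega = \rho_i(a)x\Omega = 0$ for every $a$ and $i$, so $x$ annihilates a total set of vectors.

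For uniqueness, suppose $(\widetilde M, \widetilde\gamma_i, \widetilde\gamma)$ is another tuple satisfying the three bulleted properties. The compatibility $\widetilde\gamma_i \circ \iota_i = \widetilde\gamma$ together with amalgamated freeness of $(\widetilde\gamma_i(M_i))_{i \in I}$ over $\widetilde\gamma(N)$ determines $\widetilde\tau$ on every $*$-polynomial in $\bigcup_i \widetilde\gamma_i(M_i)$ by iteratively centering each factor through $E_{\widetilde\gamma(N)}$ and invoking freeness to kill the alternating centered products: the value reduces to a nested expression in the given traces $\tau_i$ and $\sigma$. The same recursion inside $M$ returns the same numerical values, so the joint $*$-distributions of the two generating families agree. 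Lemma \ref{lem: law isomorphism} then supplies a trace-preserving $*$-isomorphism $\Phi \colon M \to \widetilde M$ with $\Phi \circ \gamma_i = \widetilde\gamma_i$ for all $i \in I$, and this $\Phi$ is unique because the $\gamma_i(M_i)$ generate $M$.
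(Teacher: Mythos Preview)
Your construction is correct and follows the standard Voiculescu--Popa approach directly in the von Neumann setting: build the Fock-type Hilbert $N$-bimodule via relative tensor products, define the left representations $\lambda_i$, and establish traciality through the modular conjugation $J$ that reverses words. One small imprecision: $M$ should be the WOT-closure of the $*$-algebra \emph{generated} by $\bigcup_i \lambda_i(M_i)$, not of the union itself; and the claim that $\lambda_i \circ \iota_i$ is independent of $i$ deserves a line (it holds because $\iota_i(N)$ acts via the left $N$-module structure on each summand, which is the same regardless of which $L^2(M_i)$ one factors through).

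The paper does not actually prove this fact; it treats it as background and gives a roadmap through the literature: take the $\mathrm{C}^*$-algebraic reduced amalgamated free product from \cite[Theorem~4.7.2]{BrownOzawa2008}, invoke \cite[Proposition~3.8.5]{DykemaNicaVoiculescu1992} for traciality of the free-product state, pass to the von Neumann algebra via the GNS construction (Lemma~\ref{lem: GNS tracial von Neumann algebra}), and deduce uniqueness along the lines of Lemma~\ref{lem: law isomorphism}. Your route is more self-contained and stays entirely in the tracial von Neumann category, avoiding the detour through $\mathrm{C}^*$-algebras; the $J$-operator argument for traciality is exactly the content of the cited Dykema--Nica--Voiculescu result, carried out by hand. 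Both approaches land on the same uniqueness mechanism, namely that amalgamated freeness determines all mixed moments and then Lemma~\ref{lem: law isomorphism} furnishes the isomorphism.
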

	
	Note that the free product in this theorem is not a \emph{universal} free product but rather a \emph{reduced} free product, one constructed as acting on a particular Hilbert space rather than from all possible representations (there is also a universal amalgamated free product for $\mathrm{C}^*$-algebras).
	The construction of amalgamated (reduced) free product for $\mathrm{C}^*$-algebras was given in \cite{voiculescu1985}, and the version for tracial von Neumann algebras was given in \cite{Popa1993}.  For a detailed proof of this fact, see the construction of the $\mathrm{C}^*$-amalgamated free product in \cite[Theorem 4.7.2]{BrownOzawa2008}, apply \cite[Proposition 3.8.5]{DykemaNicaVoiculescu1992} to establish traciality of the state, and then extend the $\mathrm{C}^*$-algebra to a tracial von Neumann algebra by Lemma \ref{lem: GNS tracial von Neumann algebra}.  The uniqueness of the amalgamated free product of tracial von Neumann algebras follows from the $\mathrm{C}^*$-case in \cite[Theorem 4.7.2]{BrownOzawa2008} by extending to the von Neumann algebra along similar lines as Lemma \ref{lem: law isomorphism}.
	
	\subsubsection{Operations with projections in von Neumann algebras}
	
	The set of projections in a von Neumann algebra is closed under unions and intersections.  We recall the following elementary facts for later use.
	
	\begin{fact} \label{fact: stronglimitpqp}
		Let $p$ and $q$ be two projections in a von Neumann algebra $M$. 
		Then $(pqp)^k \to p \land q$ in SOT as $k \to \infty$.  If $\tau$ is a normal trace on $M$, then $\tau(p \wedge q) = \lim_{k \to \infty} \tau((pq)^k) = \inf_{k \in \N} \tau((pq)^k)$.
	\end{fact}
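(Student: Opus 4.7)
The plan is to proceed by spectral/monotonicity considerations on $pqp$ and then extract the trace formula using traciality.

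First, I would observe that since $p$ and $q$ are projections, $pqp = (qp)^*(qp)$ is positive and bounded above by $p$, hence by $1$. In particular $0 \leq pqp \leq 1$. By functional calculus on the positive contraction $pqp$, the sequence $(pqp)^k$ is monotonically decreasing in the usual operator order and bounded below by $0$. By the standard monotone convergence result for decreasing nets of positive operators bounded below, there exists $r \in M$ with $0 \leq r \leq p$ such that $(pqp)^k \to r$ in SOT. Since SOT is separately continuous on norm-bounded sets and $(pqp)^{2k} = ((pqp)^k)^2$, passing to the limit gives $r^2 = r$, so $r$ is a projection.

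The main step is to identify $r$ with $p \wedge q$. For the inclusion $\operatorname{Ran}(r) \subseteq \operatorname{Ran}(p \wedge q)$, let $\xi \in \operatorname{Ran}(r)$. Since $pqp \cdot r = r$ (because $pqp$ commutes with its own SOT-limit of powers, and the limit is a fixed point), we get $pqp\xi = \xi$, so $\ip{\xi,pqp\xi} = \|\xi\|^2$, i.e.\ $\|qp\xi\|^2 = \|\xi\|^2$. Combined with $\|qp\xi\| \leq \|p\xi\| \leq \|\xi\|$, this forces $\|p\xi\| = \|\xi\|$ and $\|qp\xi\| = \|p\xi\|$; since $p$ and $q$ are projections, this yields $p\xi = \xi$ and $qp\xi = p\xi$, hence $q\xi = \xi$. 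So $\xi \in \operatorname{Ran}(p) \cap \operatorname{Ran}(q) = \operatorname{Ran}(p \wedge q)$. The reverse inclusion is immediate: if $p\xi = q\xi = \xi$, then $(pqp)^k \xi = \xi$ for all $k$, so $r\xi = \xi$. Therefore $r = p \wedge q$, establishing SOT convergence $(pqp)^k \to p \wedge q$.

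For the trace identity, normality of $\tau$ together with the SOT convergence (on the norm-bounded sequence $(pqp)^k$) gives $\tau(p \wedge q) = \lim_k \tau((pqp)^k)$. An elementary manipulation using $p^2 = p$ shows $(pqp)^k = p(qp)^k$, and then traciality yields
\[
\tau((pqp)^k) = \tau(p(qp)^k) = \tau((qp)^k p) = \tau((qp)^k) = \tau((pq)^k),
\]
where the last equality uses cyclic invariance after writing $(qp)^k = q(pq)^{k-1}p$. Finally, monotonicity of $(pqp)^k$ together with positivity of $\tau$ makes the sequence $\tau((pqp)^k) = \tau((pq)^k)$ monotonically decreasing, so the limit coincides with the infimum.

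The only nontrivial step is the norm-chain argument identifying $r$ as $p \wedge q$; everything else is formal once monotone SOT convergence and traciality are invoked.
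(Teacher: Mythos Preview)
Your proof is correct and follows essentially the same route as the paper's sketch: establish monotone SOT convergence of $(pqp)^k$ to some $r$, identify $r$ with $p\wedge q$, and then read off the trace statement via normality and monotonicity. The paper leaves the identification $r=p\wedge q$ as ``standard manipulations,'' whereas you supply an explicit norm-chain argument on the range of $r$ and also spell out the traciality step converting $\tau((pqp)^k)$ to $\tau((pq)^k)$; these are exactly the details the sketch omits.
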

	
	\begin{proof}[Proof sketch]
		Since $(pq)^kp \geq (pq)^{k+1}p$, the sequence $(pq)^kp$ converges in SOT to some limit $r$, which is in $M$ since $M$ is SOT-closed.  By standard manipulations, one can show that $r \leq p$ and $r \leq q$ and $p \wedge q \leq r$.  If $\tau$ is a normal trace, then by SOT continuity $\tau((pq)^kp) \to \tau(p \wedge q)$, and by monotonicity, the limit over $k$ is the same as the infimum.
	\end{proof}

	\begin{fact} \label{fact: trace of max/min of projections}
		Let $(M,\tau)$ be a tracial von Neumann algebra.  If $p$ and $q$ are projections in $M$, then
		\[
		\tau(p \vee q) + \tau(p \wedge q) = \tau(p) + \tau(q).
		\]
	\end{fact}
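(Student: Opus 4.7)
The plan is to reduce the claim to Kaplansky's parallelogram identity at the level of Murray--von Neumann equivalence: namely, to produce a partial isometry $v \in M$ with $v^*v = p - p \wedge q$ and $vv^* = p \vee q - q$. Once this is in hand, the trace property $\tau(v^*v) = \tau(vv^*)$ immediately gives $\tau(p) - \tau(p \wedge q) = \tau(p \vee q) - \tau(q)$, which rearranges to the claimed identity.

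To construct such a $v$, I would consider the element $x = (1-q)p \in M$ and take its polar decomposition $x = v|x|$. Since $M$ is a von Neumann algebra, the partial isometry $v$ lies in $M$, with $v^*v$ equal to the projection onto $(\ker x)^\perp$ and $vv^*$ equal to the projection onto $\overline{\Ran(x)} = (\ker x^*)^\perp$. So the real task is to identify these two kernels.

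For the right support, note that $\ker p \subseteq \ker x$ trivially, while for $\xi \in \Ran(p)$ one has $x\xi = (1-q)\xi$, so $\xi \in \ker x$ iff $q\xi = \xi$. Hence $\ker x \cap \Ran(p) = \Ran(p) \cap \Ran(q)$, which by the defining property of the meet of projections is the range of $p \wedge q$. Decomposing orthogonally gives $\ker x = \ker p \oplus \Ran(p \wedge q)$, so the projection onto $\ker x$ is $(1-p) + (p \wedge q)$, and therefore $v^*v = p - p \wedge q$. The computation for $\ker x^* = \ker(p(1-q))$ is symmetric, with the roles of $p$ and $1-q$ swapped: one finds $\ker x^* = \Ran(q) \oplus \Ran((1-p) \wedge (1-q))$, and the De Morgan identity $(1-p) \wedge (1-q) = 1 - (p \vee q)$ then yields $vv^* = 1 - [q + (1 - p \vee q)] = p \vee q - q$, as required.

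The principal bookkeeping obstacle is carrying out these two support calculations cleanly; the inputs one needs — existence of the polar decomposition inside $M$, the characterization of $p \wedge q$ as the projection onto $\Ran(p) \cap \Ran(q)$, and the De Morgan identity for $\vee$ and $\wedge$ of complementary projections — are all standard, after which the trace property finishes the proof in a single line.
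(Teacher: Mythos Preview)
Your proof is correct and is essentially the same as the paper's: both invoke Kaplansky's parallelogram identity by taking the polar decomposition of an off-diagonal product of the projections (the paper uses $q(1-p)$ where you use $(1-q)p$), obtaining Murray--von Neumann equivalent differences and then applying $\tau(v^*v)=\tau(vv^*)$. You simply spell out the support computations that the paper leaves to the reference \cite[III.1.1.3]{Blackadar2006}.
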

	
	\begin{proof}[Proof sketch]
		As noted in \cite[III.1.1.3]{Blackadar2006}, letting $v$ be the partial isometry in the polar decomposition of $q(1-p)$, we have $v^*v = (p \vee q) - p$ and $vv^* = q - (p \wedge q)$ and therefore $(p \vee q) - p$ and $q - (p \wedge q)$ have the same trace.
	\end{proof}
	
	The next two facts are well-known and straightforward to check.
	
	\begin{fact} \label{fact: min versus sum}
		Let $(p_i)_{i=1}^n$ and $(q_i)_{i=1}^n$ be projections in some von Neumann algebra such that $p_i p_j = 0 = q_i q_j$ for $i \neq j$.  Then
		\[
		\left( \sum_{i=1}^n p_i \right) \wedge \left( \sum_{i=1}^n q_i \right) \geq \sum_{i=1}^n p_i \wedge q_i.
		\]
	\end{fact}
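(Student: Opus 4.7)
The plan is straightforward and rests on two observations: that a sum of pairwise orthogonal projections is again a projection, and that $P \wedge Q$ is the \emph{greatest} lower bound of $\{P,Q\}$ in the lattice of projections, so it suffices to exhibit a common lower bound.

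First I would set $P := \sum_{i=1}^n p_i$ and $Q := \sum_{i=1}^n q_i$. The hypotheses $p_ip_j = 0$ and $q_iq_j = 0$ for $i \neq j$ ensure that $P$ and $Q$ are themselves projections. Next I would note that since $p_i \wedge q_i \leq p_i$ and $p_ip_j = 0$, one has $(p_i \wedge q_i)(p_j \wedge q_j) = (p_i \wedge q_i)p_ip_j(p_j \wedge q_j) = 0$ for $i \neq j$, so $R := \sum_{i=1}^n p_i \wedge q_i$ is also a projection.

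It then remains to show $R \leq P$ and $R \leq Q$. For $R \leq P$, I would write
\[
P - R = \sum_{i=1}^n (p_i - p_i \wedge q_i),
\]
where each summand is a positive operator supported under $p_i$, and the supports are pairwise orthogonal. Hence $P - R$ is a sum of mutually orthogonal positives, in particular positive, giving $R \leq P$. The identical argument using $q_i \wedge p_i \leq q_i$ and the orthogonality of the $q_i$'s yields $R \leq Q$. Since $P \wedge Q$ is the largest projection dominated by both $P$ and $Q$, we conclude $R \leq P \wedge Q$, which is the desired inequality.

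There is no real obstacle here; the only point that needs care is to propagate the orthogonality from $\{p_i\}$ and $\{q_i\}$ to $\{p_i \wedge q_i\}$ and to the differences $p_i - p_i \wedge q_i$, which is immediate from the monotonicity of the meet.
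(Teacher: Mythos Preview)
Your proof is correct and is precisely the straightforward verification the paper has in mind; the paper itself omits the proof, stating only that this fact is ``well-known and straightforward to check.'' Your argument---showing $R := \sum_i p_i \wedge q_i$ is a projection dominated by both $P$ and $Q$ via the orthogonality of the summands---is the natural way to carry this out.
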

	
	\begin{fact} \label{fact: tensor versus min}
		Let $p_1, p_2$ be projections in some von Neumann algebra $M$ and let $q_1, q_2$ be projections in some von Neumann algebra $N$.  Then in $M \overline{\otimes} N$, we have $(p_1 \otimes q_1) \wedge (p_2 \otimes q_2) = (p_1 \wedge p_2) \otimes (q_1 \wedge q_2)$.
	\end{fact}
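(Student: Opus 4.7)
The plan is to prove both inequalities $(p_1 \wedge p_2) \otimes (q_1 \wedge q_2) \leq (p_1 \otimes q_1) \wedge (p_2 \otimes q_2)$ and the reverse. The first direction is immediate: since $p_1 \wedge p_2 \leq p_i$ and $q_1 \wedge q_2 \leq q_i$ for $i = 1, 2$, the tensor product of these smaller projections is $\leq p_i \otimes q_i$, so by the definition of $\wedge$ as the greatest lower bound, $(p_1 \wedge p_2) \otimes (q_1 \wedge q_2) \leq (p_1 \otimes q_1) \wedge (p_2 \otimes q_2)$.

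For the reverse direction, I would use Fact \ref{fact: stronglimitpqp}. Apply it inside $M \overline{\otimes} N$ to the projections $P_1 = p_1 \otimes q_1$ and $P_2 = p_2 \otimes q_2$: this yields
\[
(P_1 P_2 P_1)^k = \bigl((p_1 p_2 p_1) \otimes (q_1 q_2 q_1)\bigr)^k = (p_1 p_2 p_1)^k \otimes (q_1 q_2 q_1)^k \longrightarrow P_1 \wedge P_2 \quad (\text{SOT}).
\]
Separately, applying Fact \ref{fact: stronglimitpqp} in $M$ and $N$, we have $(p_1 p_2 p_1)^k \to p_1 \wedge p_2$ and $(q_1 q_2 q_1)^k \to q_1 \wedge q_2$ in SOT.

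The key step is then to verify that SOT-convergence passes through the tensor product in the presence of a uniform operator-norm bound. Since the operators in question are all projections (hence bounded by $1$), for any elementary tensor $\xi \otimes \eta$ in the Hilbert space tensor product one splits
\[
(a_k \otimes b_k - a \otimes b)(\xi \otimes \eta) = (a_k - a)\xi \otimes b_k \eta + a\xi \otimes (b_k - b)\eta,
\]
and both terms go to zero in norm. Linearity and the uniform bound extend this to a dense subspace, and then to all of $L^2(M,\tau) \otimes L^2(N,\sigma)$ (or whichever representation Hilbert space one uses), proving that $(p_1 p_2 p_1)^k \otimes (q_1 q_2 q_1)^k \to (p_1 \wedge p_2) \otimes (q_1 \wedge q_2)$ in SOT.

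By uniqueness of SOT limits, the two identifications of the limit must coincide, giving $(p_1 \otimes q_1) \wedge (p_2 \otimes q_2) = (p_1 \wedge p_2) \otimes (q_1 \wedge q_2)$. I do not expect any real obstacle here; the only mildly subtle point is the joint SOT-continuity of tensor on bounded sets, which is routine but worth spelling out explicitly.
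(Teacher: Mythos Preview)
Your argument is correct. The paper does not actually supply a proof of this fact; it is simply declared ``well-known and straightforward to check'' alongside Fact~\ref{fact: min versus sum}. Your approach via Fact~\ref{fact: stronglimitpqp}---computing $(P_1P_2P_1)^k = (p_1p_2p_1)^k \otimes (q_1q_2q_1)^k$ and identifying the SOT limit two ways---is a clean way to fill in the details, and the joint SOT-continuity of the tensor product on norm-bounded sets is handled correctly. One could alternatively argue the reverse inequality purely at the Hilbert-space level (the range of $(p_1 \otimes q_1) \wedge (p_2 \otimes q_2)$ is $(\operatorname{ran} p_1 \otimes \operatorname{ran} q_1) \cap (\operatorname{ran} p_2 \otimes \operatorname{ran} q_2)$, which one checks is contained in $(\operatorname{ran} p_1 \cap \operatorname{ran} p_2) \otimes (\operatorname{ran} q_1 \cap \operatorname{ran} q_2)$), but your route has the virtue of staying inside the operator-algebraic framework already set up in the paper and reusing Fact~\ref{fact: stronglimitpqp}.
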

	
	\subsection{Compact Quantum groups}
	
	For a tensor product $A_1 \otimes \dots \otimes A_n$ and for distinct indices $j_1$, \dots, $j_k \in [n]$, we denote by $\iota_{j_1,\dots,j_k}$, the inclusion $A_{j_1} \otimes \dots \otimes A_{j_k} \to A_1 \otimes \dots \otimes A_n$ that maps $a_1 \otimes \dots \otimes a_k$ to the simple tensor with $a_i$ in the $j_i$'s component and $1$'s in the remaining components.  For instance, for a three-fold tensor product,
	\[
	\iota_{1,2}(a \otimes b) = a \otimes b \otimes 1, \qquad \iota_{3,2}(a \otimes b) = 1 \otimes b \otimes a.
	\]

	\begin{definition}[{Woronowicz \cite{Woronowicz1998}}]
		A \emph{compact quantum group} is a unital $\Cst$-algebra $A$ together with a $*$-homomorphism $\Delta : A \rightarrow A \otimes A$ such that:
		\begin{enumerate}
			\item $\Delta$ is coassociative i.e. $(\mathbbm{1}\otimes \Delta )\Delta = (\Delta \otimes \mathbbm{1})\Delta$.
			\item $ \overline{\textrm{span}(\Delta (A) (A \otimes \mathbbm{1}))} = \overline{\textrm{span}(\Delta (A) (\mathbbm{1} \otimes A))} = A \otimes A$.
		\end{enumerate} 
	\end{definition}

	\begin{remark}
		Any compact topological group $G$ is a compact quantum group as follows. The pair $(C(G), \Delta)$, where $\Delta : C(G) \to C(G) \otimes C(G)$ defined as $\Delta(f)(s,t) = f(st)$. 
	\end{remark}
	
	\begin{definition}[{\cite{Woronowicz1979}}]
		A \emph{Woronowicz Hopf $\Cst$-algebra }(compact matrix quantum group) is a unital $\Cst$-algebra $A$ together with a dense $*$-subalgebra $\mathcal{A}$ generated by $u_{ij}$ for some $n \in \mathbb{N} \text{ and } i,j \in \{ 1,...,n \} $, a unital $*$-homomorphism $\Delta: A \rightarrow A \otimes A$ such that, letting $U = [u_{ij}]_{i,j=1}^n \in M_n(A)$,
		\begin{enumerate}
			\item  $U^T$ is invertible in $M_n(A)$. 
			\item  $U$ is a unitary element of $M_n(A)$.
			\item  For every $i,j$, $ \Delta (u_{ij}) = \sum_{k=1}^n u_{ik} \otimes u_{kj}$. 
		\end{enumerate}
	\end{definition}
	
	The quantum permutation group $S_n^+$ is defined as follows. 
	The underlying $\Cst$-algebra $C(S_n^+)$ is the universal $\mathrm{C}^*$-algebra generated by $u_{ij}: 1 \leq i,j \leq n$ such that
	\[
	u_{ij} ^* = u_{ij} = u_{ij}^2, \qquad \sum_i u_{ij} = 1, \qquad \sum_j u_{ij} =1 
	\]
	Writing $\bM_n = \bM_n(\C)$ for $n \times n$ complex matrices, the comultiplication for $S_n^+$ is given by
	\[
	\Delta(u_{ij}) = \sum_{k=1}^n u_{ik} \otimes u_{kj}.
	\]
	$U$ will denote the matrix $U = [u_{ij}]_{i,j=1}^n \in \bM_n(C(S_n^+)) = \bM_n \otimes C(S_n^+)$.  The relations above are equivalent to $U^*U = UU^* = 1$ and each $u_{ij}$ being a projection. The comultiplication is given by
	\[
	(\id_{\bM_n} \otimes \Delta)(U) = \iota_{1,2}(U) \iota_{1,3}(U) \in \bM_n \otimes C(S_n^+) \otimes C(S_n^+).
	\]
	The quantum permutation group is the universal final object in the category $\mathcal{C}$ of quantum groups that act on an $n$-point space $X_n$; see \cite{Wang1998}
	
	For each $n \in \mathbb{N}$, the quantum permutation group $S_n^+$ contains the classical permutation group $S_n$ as a quantum subgroup, which means that there is a quotient map $C(S_n^+) \to C(S_n)$.
	
	\begin{theorem}[{\cite[Theorem 3.1]{Wang1998}}]
		\label{theorem:quotientmap}
		Let $\theta: C(S_n^+) \to C(S_n)$ be the surjective $*$-homomorphism given by the indicator function $\theta(u_{ij}) = \mathds{1}_{\sigma(i)=j}$. Then, $\theta$ is an embedding $S_n \subset S_n^+$ of quantum groups, that is, $\Delta_{C(S_n)} \circ \theta = (\theta \otimes \theta) \circ \Delta_{C(S_n^+)}$.
	\end{theorem}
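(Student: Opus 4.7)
The strategy is to use the universal property of $C(S_n^+)$ to construct $\theta$, verify surjectivity via Stone--Weierstrass, and then check the intertwining relation on the generators $u_{ij}$.

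First I would set $v_{ij} \in C(S_n)$ to be the indicator function $v_{ij}(\sigma) = \mathds{1}_{\sigma(i)=j}$. Each $v_{ij}$ takes values in $\{0,1\}$, so it is a self-adjoint idempotent (projection) in $C(S_n)$. For fixed $j$, we have $\sum_{i} v_{ij}(\sigma) = |\{i : \sigma(i) = j\}| = 1$ since $\sigma$ is a bijection, so $\sum_i v_{ij} = 1$; likewise $\sum_j v_{ij} = 1$. Hence the tuple $(v_{ij})_{i,j=1}^n$ satisfies the defining relations of $C(S_n^+)$, so the universal property (as recalled in \S \ref{subsec: C* tensor products}) produces a unique unital $*$-homomorphism $\theta : C(S_n^+) \to C(S_n)$ with $\theta(u_{ij}) = v_{ij}$.

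Next, for surjectivity, I would note that since $S_n$ is a finite set, $C(S_n)$ is a finite-dimensional commutative $\Cst$-algebra spanned by point masses, and it suffices to show the $v_{ij}$ separate points (then the unital $*$-subalgebra they generate is all of $C(S_n)$ by Stone--Weierstrass, or more elementarily by noting that point-separating projections already generate the full algebra of functions on a finite set). If $\sigma \neq \sigma'$, choose $i$ with $\sigma(i) \neq \sigma'(i)$ and set $j = \sigma(i)$; then $v_{ij}(\sigma) = 1 \neq 0 = v_{ij}(\sigma')$. Hence $\theta$ is surjective.

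For the intertwining $\Delta_{C(S_n)} \circ \theta = (\theta \otimes \theta) \circ \Delta_{C(S_n^+)}$, since both sides are unital $*$-homomorphisms from $C(S_n^+)$ into $C(S_n) \otimes C(S_n) \cong C(S_n \times S_n)$, and since the $u_{ij}$ generate $C(S_n^+)$ as a $\Cst$-algebra, it suffices to check equality on each $u_{ij}$. On the left, evaluating at $(\sigma,\tau) \in S_n \times S_n$ gives
\[
\Delta_{C(S_n)}(\theta(u_{ij}))(\sigma,\tau) = v_{ij}(\sigma\tau) = \mathds{1}_{(\sigma\tau)(i)=j}.
\]
On the right,
\[
(\theta \otimes \theta)(\Delta(u_{ij}))(\sigma,\tau) = \sum_{k=1}^n v_{ik}(\sigma) v_{kj}(\tau) = \sum_{k=1}^n \mathds{1}_{\sigma(i)=k} \mathds{1}_{\tau(k)=j} = \mathds{1}_{\tau(\sigma(i))=j},
\]
which matches the left-hand side under the convention for composition in $S_n$ that makes $\sigma \mapsto [v_{ij}(\sigma)]_{i,j}$ a group homomorphism into matrix multiplication. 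Both sides agree on each generator, so the intertwining holds on all of $C(S_n^+)$.

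There is no real obstacle here; this is essentially a bookkeeping exercise. The only subtlety worth pausing on is the consistency of the composition convention in $S_n$ with the matrix-multiplication form of $\Delta$, but once that is fixed, the computation on generators is immediate and the universal property delivers the rest.
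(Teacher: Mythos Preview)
Your proof is correct and is the standard argument. Note, however, that the paper does not actually prove this theorem: it is stated with a citation to \cite[Theorem 3.1]{Wang1998} and no proof is given in the present paper. So there is nothing to compare against here; your write-up simply supplies the routine verification that the cited result entails.
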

	
	\begin{remark}
		For $n \leq 3$, $S_n$ and $S_n^+$ are isomorphic.  Otherwise, $S_n^+$ is neither classical nor finite-dimensional.
	\end{remark}
	
	
	Compact quantum groups have a natural analog of the Haar measure, known as the \emph{Haar state}.  Its existence and uniqueness for CQGs were proved by Woronowicz in the 1980s \cite{Woronowicz1987}. As it turns out, for the quantum permutation group $S_n^+$, the Haar state is also tracial and faithful, and it can be described combinatorially; see \cite{BanicaBichonCollins2007}.  Moreover, $C(S_n^+)$ has a large number of finite-dimensional representations and traces \cite{FreslonSkalskiWang2024}.  Other examples of quantum permutation matrices can be seen through the connection with quantum games \cite{LupiniMancinskaRoberson2020}.
	
	\section{Distance via free couplings} \label{sec: free distance}
	
	\subsection{Definition} \label{subsec: free distance definition}
	
	Recall (Definition \ref{def: tracial coupling}) that for a $\Cst$-algebra $A$ and $\varphi_1$, $\varphi_2 \in \mathcal{T}(A)$, a \emph{tracial coupling} is a tuple $(M,\tau,\alpha_1,\alpha_2)$ where $(M,\tau)$ is a tracial von Neumann algebra and $\alpha_j: A \to M$ is a $*$-homomorphism such that $\varphi_j = \tau \circ \alpha_j$ for $j = 1$, $2$.  Note that the set of tracial couplings is nonempty because we can take $M$ to be the von Neumann algebra generated by the GNS representation of $\varphi_1 \otimes \varphi_2$ on $A \otimes_{\min} A$ (see Lemma \ref{lem: GNS tracial von Neumann algebra}).  We next give an alternative description of tracial couplings in terms of traces on the universal free product $A * A$ that restrict to $\varphi_1$ and $\varphi_2$ on the first and second copies of $A$ respectively.  We remark that this is closely related to the bijection between factorizable quantum channels and traces on free products \cite{MusatRordam2021}.
	
	\begin{definition}\label{tracesfreeproduct}
		Let $A * A$ be the universal unital $\mathrm{C}^*$ free product (see \S \ref{subsec: C* tensor products}) and let $\iota_1$, $\iota_2: A \to A * A$ be the coordinate inclusions.  For $\varphi_1$, $\varphi_2 \in \mathcal{T}(A)$, let
		\[
		\mathcal{T}_{\varphi_1,\varphi_2}(A * A) = \{\varphi \in \mathcal{T}(A * A): \varphi \circ \iota_j = \varphi_j \text{ for } j = 1, 2 \}.
		\]
	\end{definition}
	
	\begin{lemma}
		\label{lemma:tracialcouplingequivalent}
		Let $A$ be a unital $\mathrm{C}^*$-algebra.    Let $\varphi_1, \varphi_2 \in \mathcal{T}(A)$.
		\begin{enumerate}[(1)]
			\item Given $\varphi \in \mathcal{T}_{\varphi_1,\varphi_2}(A * A)$, there exists a tracial coupling $(M,\tau,\alpha_1,\alpha_2)$ such that $\varphi = \tau \circ (\alpha_1 * \alpha_2)$.
			\item Conversely, for every coupling $(M,\tau,\alpha_1,\alpha_2)$, we have $\tau \circ (\alpha_1 * \alpha_2) \in \mathcal{T}_{\varphi_1,\varphi_2}(A * A)$.
			\item Let $(M,\tau,\alpha_1,\alpha_2)$ and $(\widetilde{M},\widetilde{\tau},\widetilde{\alpha}_1,\widetilde{\alpha}_2)$ be two tracial couplings of $\varphi_1, \varphi_2$ and let $\varphi$ and $\widetilde{\varphi}$ be the associated traces in $\mathcal{T}_{\varphi_1,\varphi_2}(A * A)$.  Suppose that $M = \mathrm{W}^*(\alpha_1(A),\alpha_2(A))$ and $\widetilde{M}= \mathrm{W}^*(\widetilde{\alpha}_1(A),\widetilde{\alpha}_2(A))$.  Then $\varphi = \widetilde{\varphi}$ if and only if there is a $*$-isomorphism $f: M \to \tilde{M}$ such that $\tau = \tilde{\tau} \circ f$ and $f \circ \alpha_j = \widetilde{\alpha}_j$ for $j = 1, 2$.
		\end{enumerate}
	\end{lemma}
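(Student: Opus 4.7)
\emph{Parts (1) and (2).} The plan is to pass back and forth between a tracial coupling and the associated trace on $A * A$ using the universal property of the free product together with the GNS construction. For (1), given $\varphi \in \mathcal{T}_{\varphi_1,\varphi_2}(A*A)$, I would apply Lemma \ref{lem: GNS tracial von Neumann algebra} to the pair $(A*A,\varphi)$ to obtain a tracial von Neumann algebra $(M,\tau)$ with $M = \pi_\varphi(A*A)''$ and $\tau(\cdot) = \ip{\widehat{1},\cdot\,\widehat{1}}$, and then define $\alpha_j := \pi_\varphi \circ \iota_j : A \to M$. By construction $\tau \circ \alpha_j = \varphi \circ \iota_j = \varphi_j$, so $(M,\tau,\alpha_1,\alpha_2)$ is a tracial coupling. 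Moreover, the uniqueness part of the universal property of $A*A$ forces $\alpha_1 * \alpha_2 = \pi_\varphi$, and so $\tau \circ (\alpha_1 * \alpha_2) = \varphi$. For (2), given a tracial coupling $(M,\tau,\alpha_1,\alpha_2)$, the universal property of the $\mathrm{C}^*$-algebraic free product produces a unique unital $*$-homomorphism $\alpha_1 * \alpha_2 : A*A \to M$; composing with $\tau$ gives a tracial state on $A*A$ by Lemma \ref{lem: quotient GNS isomorphism}(2), whose pullback under $\iota_j$ is $\tau \circ \alpha_j = \varphi_j$.

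\emph{Part (3), ``if'' direction.} If such an $f$ is given, then applying the uniqueness part of the universal property of $A*A$ to $f \circ \alpha_j = \widetilde{\alpha}_j$ gives $f \circ (\alpha_1 * \alpha_2) = \widetilde{\alpha}_1 * \widetilde{\alpha}_2$. Therefore
\[
\widetilde{\varphi} = \widetilde{\tau} \circ (\widetilde{\alpha}_1 * \widetilde{\alpha}_2) = \widetilde{\tau} \circ f \circ (\alpha_1 * \alpha_2) = \tau \circ (\alpha_1 * \alpha_2) = \varphi.
\]

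\emph{Part (3), ``only if'' direction.} Suppose $\varphi = \widetilde{\varphi}$. Here I would invoke the ``law determines the algebra'' lemma (Lemma \ref{lem: law isomorphism}) with index set $I = \{1,2\} \times A$ and variables $x_{(j,a)} := \alpha_j(a) \in M$, $\widetilde{x}_{(j,a)} := \widetilde{\alpha}_j(a) \in \widetilde{M}$. Any $*$-polynomial in these variables evaluated on the $x_{(j,a)}$ equals $(\alpha_1 * \alpha_2)(P)$ for the element $P \in A*A$ obtained by substituting $\iota_j(a)$ for $x_{(j,a)}$, and similarly on the $\widetilde{x}_{(j,a)}$ side. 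Hence the trace evaluations on $*$-polynomials agree because $\tau \circ (\alpha_1 * \alpha_2) = \varphi = \widetilde{\varphi} = \widetilde{\tau} \circ (\widetilde{\alpha}_1 * \widetilde{\alpha}_2)$. The generation hypotheses $M = \mathrm{W}^*(\alpha_1(A),\alpha_2(A))$ and $\widetilde{M} = \mathrm{W}^*(\widetilde{\alpha}_1(A),\widetilde{\alpha}_2(A))$ are exactly what is needed to apply Lemma \ref{lem: law isomorphism}, producing a trace-preserving $*$-isomorphism $f: M \to \widetilde{M}$ with $f(\alpha_j(a)) = \widetilde{\alpha}_j(a)$, i.e.\ $f \circ \alpha_j = \widetilde{\alpha}_j$.

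The bulk of the argument is routine bookkeeping between universal properties; the only subtle point is recognizing that Lemma \ref{lem: law isomorphism} applies directly with the generators taken to be $\alpha_j(a)$ for \emph{all} $a \in A$ rather than a restricted generating set of $A$. This is legitimate because $\alpha_j$ is a $*$-homomorphism, so $*$-polynomials in the $x_{(j,a)}$ already exhaust the $*$-subalgebra generated by $\alpha_1(A) \cup \alpha_2(A)$, which is precisely $(\alpha_1 * \alpha_2)(A*A)$. It is also important here that $A*A$ is taken to be the \emph{universal} free product, so that every tracial coupling $(M,\tau,\alpha_1,\alpha_2)$ is captured by some trace on $A*A$ rather than only those arising in a fixed representation.
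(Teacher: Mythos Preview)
Your proof is correct and follows essentially the same approach as the paper's: GNS plus Lemma~\ref{lem: GNS tracial von Neumann algebra} for (1), the universal property of $A*A$ for (2), and Lemma~\ref{lem: law isomorphism} with generators $\{\alpha_j(a)\}_{j,a}$ for (3). Your treatment is slightly more explicit in places (e.g.\ noting $\alpha_1*\alpha_2 = \pi_\varphi$ in (1), and using the universal property directly rather than a density argument for the ``if'' direction of (3)), but the structure is the same.
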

	
	\begin{proof}
		(1) Let $\varphi \in \mathcal{T}(A * A)$.  Let $\pi_\varphi: A * A \to B(H_\varphi)$ be the GNS construction associated to $A * A$ and $\varphi$.  Let $M = \pi_\varphi(A * A)''$, which is a tracial von Neumann algebra by Lemma \ref{lem: GNS tracial von Neumann algebra}. Let $\alpha_j = \pi_\varphi \circ \iota_j$, and then $\tau \circ \alpha_j = \varphi \circ \iota_j = \varphi_j$, so $(M,\tau,\alpha_1,\alpha_2)$ is a tracial coupling.
		
		(2) Let $(M,\tau,\alpha_1,\alpha_2)$ be a tracial coupling.  Let $\alpha_1 * \alpha_2: A * A \to M$ be the $*$-homomorphism given by the universal property of the full free product, and let $\varphi := \tau \circ (\alpha_1 * \alpha_2)$.  Then $\varphi \circ \iota_j = \tau \circ \alpha_j = \varphi_j$ and hence $\varphi \in \mathcal{T}_{\varphi_{1}, \varphi_{2}} (A * A)$.
		
		(3) Suppose that $\varphi = \tilde{\varphi}$.  Note that $(\alpha_1(a))_{a \in A} \cup (\alpha_2(b))_{b \in A}$ is a generating set for $M$ and analogously $(\tilde{\alpha}_1(a))_{a \in A} \cup (\tilde{\alpha}_2(b))_{b \in A}$ is a generating set for $\tilde{M}$.  Since $\varphi = \tilde{\varphi}$, we have in particular that for $*$-polynomials
		\begin{align*}
			\tau(p((\alpha_1(a))_{a \in A},(\alpha_2(b))_{b \in B}))
			&= \varphi(p((\iota_1(a))_{a \in A},(\iota_2(b))_{b \in B}) \\
			&= \tilde{\varphi}(p((\iota_1(a))_{a \in A},(\iota_2(b))_{b \in B}) \\
			&= \tilde{\tau}(p((\tilde{\alpha}_1(a))_{a \in A},(\tilde{\alpha}_2(b))_{b \in B})).
		\end{align*}
		Therefore, by Lemma \ref{lem: law isomorphism}, we obtain the desired isomorphism $M \to \tilde{M}$ sending $\alpha_1(a)$ to $\tilde{\alpha}_1(a)$ and $\alpha_2(b)$ to $\tilde{\alpha}_2(b)$ for $a, b \in A$.  Conversely, if there is such a $*$-isomorphism $f$, then
		\begin{align*}
			\varphi(p((\iota_1(a))_{a \in A},(\iota_2(b))_{b \in B})
			&= \tau(p((\alpha_1(a))_{a \in A},(\alpha_2(b))_{b \in B})) \\ 
			&= \tilde{\tau}(p((\tilde{\alpha}_1(a))_{a \in A},(\tilde{\alpha}_2(b))_{b \in B})) \\
			&= \tilde{\varphi}(p((\iota_1(a))_{a \in A},(\iota_2(b))_{b \in B}).
		\end{align*}
		Thus, $\varphi$ and $\tilde{\varphi}$ agree on the $*$-subalgebra generated by $\iota_1(A)$ and $\iota_2(A)$, and therefore they agree on all of $A *_{\operatorname{full}} A$.
	\end{proof} 
	
	\begin{definition}[Quantum Hamming distance] \label{qhc}
		For $\varphi_1, \varphi_2 \in \mathcal{T}(C(S_n^+))$, let
		\begin{equation} \label{eq: qhc def}
			\freedistance(\varphi_1,\varphi_2) = \inf_{\text{tracial couplings } (M,\tau,\alpha_1,\alpha_2)} \left( 1 - \frac{1}{n} \sum_{i,j=1}^n \tau(\alpha_1(u_{ij}) \wedge \alpha_2(u_{ij}))  \right).
		\end{equation}
	\end{definition}

	Using Lemma \ref{lemma:tracialcouplingequivalent} and Fact \ref{fact: stronglimitpqp}, we can rewrite the quantum Hamming cost in terms of $\mathcal{T}_{\varphi_1,\varphi_2}(A * A)$.
	
	\begin{lemma} \label{lemma:hammingcostexpression} 
		For $j \in \{ 1, 2 \}$, let $\iota_j$ be as defined in \cref{tracesfreeproduct}. Given any $\varphi_1, \varphi_2 \in \mathcal{T}(C(S_n^+))$, the Hamming cost in \cref{qhc} is equivalent to the following:
		\[
		\freedistance(\varphi_1,\varphi_2) = \inf_{\varphi \in \mathcal{T}(A * A)} \sup_k \left( 1 - \frac{1}{n} \sum_{i,j=1}^n \varphi([\iota_1(u_{ij}) \iota_2(u_{ij})]^k)  \right).
		\]
	\end{lemma}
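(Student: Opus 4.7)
The identity will follow by combining Lemma \ref{lemma:tracialcouplingequivalent} (correspondence between tracial couplings and traces on $A * A$ with prescribed marginals) with Fact \ref{fact: stronglimitpqp} (which expresses $\tau(p \wedge q)$ as a monotone limit of $\tau((pq)^k)$). The infimum in the statement should be understood over $\mathcal{T}_{\varphi_1,\varphi_2}(A * A)$, since this is the set that corresponds bijectively to the class of tracial couplings of $\varphi_1$ and $\varphi_2$ up to the natural equivalence.

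\textbf{Step 1: Rewrite the cost via powers.} Fix a tracial coupling $(M,\tau,\alpha_1,\alpha_2)$ and let $p_{ij} = \alpha_1(u_{ij})$, $q_{ij} = \alpha_2(u_{ij})$, both projections in $M$. By Fact \ref{fact: stronglimitpqp}, for each $(i,j)$ the sequence $\tau((p_{ij} q_{ij})^k)$ is monotone decreasing in $k$ (the underlying positive operator $(p_{ij}q_{ij}p_{ij})^k = p_{ij}(q_{ij}p_{ij})^k$ is decreasing by spectral calculus since $0 \le p_{ij}q_{ij}p_{ij} \le 1$, and its trace equals $\tau((p_{ij}q_{ij})^k)$ by the trace property), and its limit equals $\tau(p_{ij} \wedge q_{ij})$. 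Hence
\[
1 - \frac{1}{n} \sum_{i,j=1}^n \tau(p_{ij} \wedge q_{ij}) = 1 - \frac{1}{n} \sum_{i,j=1}^n \inf_k \tau((p_{ij}q_{ij})^k).
\]

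\textbf{Step 2: Commute sum and infimum.} Because each summand is monotone decreasing in $k$, the sum of infima over $(i,j)$ coincides with the infimum over $k$ of the sum (the dominated/monotone convergence argument in the finite sum is immediate). Therefore
\[
1 - \frac{1}{n} \sum_{i,j=1}^n \inf_k \tau((p_{ij}q_{ij})^k) = \sup_k \left( 1 - \frac{1}{n} \sum_{i,j=1}^n \tau((p_{ij}q_{ij})^k) \right),
\]
where the $\sup$ becomes a supremum since subtracting turns decreasing into increasing.

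\textbf{Step 3: Translate couplings to traces on the free product.} By Lemma \ref{lemma:tracialcouplingequivalent}(1)--(2), the assignment $(M,\tau,\alpha_1,\alpha_2) \mapsto \tau \circ (\alpha_1 * \alpha_2)$ is a surjection from the class of tracial couplings onto $\mathcal{T}_{\varphi_1,\varphi_2}(A * A)$, and for $\varphi := \tau \circ (\alpha_1 * \alpha_2)$ we have
\[
\tau\bigl((\alpha_1(u_{ij})\alpha_2(u_{ij}))^k\bigr) = \varphi\bigl((\iota_1(u_{ij})\iota_2(u_{ij}))^k\bigr)
\]
for every $k$ and $(i,j)$. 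Substituting the expression from Step 2 into the definition \eqref{eq: qhc def} and taking the infimum over tracial couplings (equivalently, by the correspondence, over $\varphi \in \mathcal{T}_{\varphi_1,\varphi_2}(A * A)$) yields the claimed formula.

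\textbf{Main obstacle.} There is no serious obstacle; the only slightly delicate point is verifying the monotonicity of $\tau((p_{ij}q_{ij})^k)$ so that one may legitimately move the $\sup_k$ outside of the finite sum. This is handled by the identity $(pqp)^k = p(qp)^k$ together with the trace property, which converts the scalar $\tau((pq)^k)$ into the trace of a power of the positive contraction $pqp$, where monotonicity is standard.
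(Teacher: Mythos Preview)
Your proof is correct and follows essentially the same approach as the paper: apply Fact \ref{fact: stronglimitpqp} to replace $\tau(p\wedge q)$ by the monotone limit $\inf_k \tau((pq)^k)$, pull the $\sup_k$ outside the finite sum, and then invoke Lemma \ref{lemma:tracialcouplingequivalent} to pass from tracial couplings to $\mathcal{T}_{\varphi_1,\varphi_2}(A*A)$. Your write-up is in fact more careful than the paper's, which simply chains the equalities; you also correctly note that the infimum is over $\mathcal{T}_{\varphi_1,\varphi_2}(A*A)$ rather than all of $\mathcal{T}(A*A)$, a detail the paper's statement elides.
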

	
	\begin{proof}
		If  $\varphi_1, \varphi_2 \in \mathcal{T}(C(S_n^+))$, then
		\begin{align*}
			\freedistance(\varphi_1,\varphi_2) &= \inf_{\text{tracial couplings } (M,\tau,\alpha_1,\alpha_2)} \left( 1 - \frac{1}{n} \sum_{i,j=1}^n \tau(\alpha_1(u_{ij}) \wedge \alpha_2(u_{ij}))  \right) \\
			&= \inf_{\text{tracial couplings } (M,\tau,\alpha_1,\alpha_2)} \left( 1 - \frac{1}{n} \sum_{i,j=1}^n \tau(\lim_{k \to \infty}[\alpha_1(u_{ij}) \alpha_2(u_{ij})]^k)  \right)\\
			&= \inf_{\text{tracial couplings } (M,\tau,\alpha_1,\alpha_2)} \sup_k \left( 1 - \frac{1}{n} \sum_{i,j=1}^n \tau([\alpha_1(u_{ij}) \alpha_2(u_{ij})]^k)  \right)\\
			&\overset{\ref{lemma:tracialcouplingequivalent}}{=} \inf_{\varphi \in \mathcal{T}(A * A)} \sup_k \left( 1 - \frac{1}{n} \sum_{i,j=1}^n \varphi([\iota_1(u_{ij}) \iota_2(u_{ij})]^k)  \right).
		\end{align*}
	\end{proof}
	
	Next, we note that the infimum in the definition of the Hamming cost is always achieved; this is analogous to \cite[Proposition 1.4]{BianeVoiculescu2001} for the Wasserstein distance in free probability.
	
	\begin{lemma} \label{lem: infimum achieved}
		Let $A = C(S_n^+)$, and let $\varphi_1$, $\varphi_2 \in \mathcal{T}(A)$.  Then there exists a tracial coupling $(M,\tau,\alpha_1,\alpha_2)$ that achieves the infimum in \eqref{eq: qhc def}.
	\end{lemma}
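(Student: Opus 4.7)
The plan is to reformulate the problem as a minimization on a weak-$*$ compact set and exhibit the objective as a weak-$*$ lower semicontinuous function, then apply standard compactness. The key is \Cref{lemma:tracialcouplingequivalent}, which allows us to replace the class of tracial couplings (not a set) by the set $\mathcal{T}_{\varphi_1,\varphi_2}(A * A)$ of traces on the universal free product with prescribed marginals, and \Cref{lemma:hammingcostexpression}, which expresses the cost in terms of values of $\varphi$ on specific elements of $A*A$.

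First, I would verify that $\mathcal{T}_{\varphi_1,\varphi_2}(A * A)$ is weak-$*$ compact. It is a subset of $\mathcal{T}(A * A)$, itself a weak-$*$ closed subset of the closed unit ball of $(A * A)^*$ and hence weak-$*$ compact by Banach--Alaoglu. The marginal constraints $\varphi(\iota_j(a)) = \varphi_j(a)$ for $a \in A$ and $j = 1, 2$ define a weak-$*$ closed affine subset, and it is nonempty since a tracial coupling exists (e.g.\ via the GNS construction applied to $\varphi_1 \otimes \varphi_2$ on $A \otimes_{\min} A$), which by \Cref{lemma:tracialcouplingequivalent}(2) yields a trace in $\mathcal{T}_{\varphi_1,\varphi_2}(A * A)$.

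Next, I would show the objective functional
\[
F(\varphi) \;=\; \sup_{k \in \N} \Bigl( 1 - \frac{1}{n} \sum_{i,j=1}^n \varphi\bigl([\iota_1(u_{ij}) \iota_2(u_{ij})]^k\bigr) \Bigr)
\]
is weak-$*$ lower semicontinuous on $\mathcal{T}_{\varphi_1,\varphi_2}(A * A)$. For each fixed $k$ and $(i,j)$, the map $\varphi \mapsto \varphi([\iota_1(u_{ij}) \iota_2(u_{ij})]^k)$ is weak-$*$ continuous (it is evaluation at a fixed element of $A * A$), so the quantity inside the $\sup$ is weak-$*$ continuous in $\varphi$, and the supremum over $k$ is therefore weak-$*$ lower semicontinuous. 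By \Cref{lemma:hammingcostexpression}, $F(\varphi)$ coincides with the Hamming cost of the coupling induced by $\varphi$, and $\freedistance(\varphi_1,\varphi_2) = \inf_{\varphi \in \mathcal{T}_{\varphi_1,\varphi_2}(A*A)} F(\varphi)$.

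Finally, a weak-$*$ lower semicontinuous function on a nonempty weak-$*$ compact set attains its infimum, so there exists $\varphi^* \in \mathcal{T}_{\varphi_1,\varphi_2}(A * A)$ with $F(\varphi^*) = \freedistance(\varphi_1,\varphi_2)$. Applying \Cref{lemma:tracialcouplingequivalent}(1) to $\varphi^*$ produces an explicit tracial coupling $(M, \tau, \alpha_1, \alpha_2)$ (via the GNS construction of $\varphi^*$) whose associated cost agrees with $F(\varphi^*)$ by \Cref{lemma:hammingcostexpression}. There is no serious obstacle; the only subtlety is being careful that Lemmas \ref{lemma:tracialcouplingequivalent} and \ref{lemma:hammingcostexpression} are used to pass freely between tracial couplings and traces on $A * A$ without changing the value of the cost, which is immediate from \Cref{fact: stronglimitpqp} once we note that $\tau([\alpha_1(u_{ij})\alpha_2(u_{ij})]^k)$ decreases to $\tau(\alpha_1(u_{ij}) \wedge \alpha_2(u_{ij}))$.
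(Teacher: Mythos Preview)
Your proposal is correct and follows essentially the same approach as the paper: reformulate the infimum via \Cref{lemma:tracialcouplingequivalent} and \Cref{lemma:hammingcostexpression} as a minimization of a weak-$*$ lower semicontinuous function (a supremum of weak-$*$ continuous functions) over the weak-$*$ compact set $\mathcal{T}_{\varphi_1,\varphi_2}(A * A)$, and conclude by compactness. Your account is in fact slightly more careful than the paper's written proof, which contains a typo (it writes $f = \inf_k f_k$ where it should be $\sup_k$); your use of $\sup_k$ is the correct formulation.
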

	
	\begin{proof}
		Define $f_k: \mathcal{T}_{\varphi_1,\varphi_2}(A * A)$ by
		\[
		f_k(\varphi) = 1 - \frac{1}{n} \sum_{i,j=1}^n \varphi[(\iota_1(u_{ij}) \iota_2(u_{ij}))^k].
		\]
		Let $f(\varphi) = \inf_{k \in \N} f_k(\varphi)$.  Since $f_k$ is weak-$*$ continuous on $\mathcal{T}_{\varphi_1,\varphi_2}(A * A)$, then $f$ is an infimum of lower semi-continuous functions and hence lower semi-continuous.  Note that $\mathcal{T}_{\varphi_1,\varphi_2}(A * A)$ is a closed subset of the compact set $\mathcal{T}(A * A)$, hence compact.  Therefore, $f$ achieves an infimum on $\mathcal{T}_{\varphi_1,\varphi_2}(A * A)$ at some point $\varphi$.  Let $(M,\tau,\alpha_1,\alpha_2)$ be the associated tracial coupling by Lemma \ref{lemma:tracialcouplingequivalent}.  Then, in light of Lemma \ref{lemma:hammingcostexpression}, $(M,\tau,\alpha_1,\alpha_2)$ achieves the infimum in \eqref{eq: qhc def}.
	\end{proof}
	
	\subsection{Metric properties}
	
	\begin{proposition}\label{prop: freedistance is a metric}
		$\freedistance$ defines a metric on $\mathcal{T}(C(S_n^+))$.
	\end{proposition}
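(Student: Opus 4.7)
The plan is to verify the four metric axioms separately. \emph{Symmetry} is immediate from $p \wedge q = q \wedge p$: exchanging $\alpha_1$ and $\alpha_2$ in a tracial coupling of $(\varphi_1,\varphi_2)$ yields a tracial coupling of $(\varphi_2,\varphi_1)$ of the same cost. \emph{Non-negativity} follows from $\tau(\alpha_1(u_{ij}) \wedge \alpha_2(u_{ij})) \leq \tau(\alpha_1(u_{ij})) = \varphi_1(u_{ij})$ together with the defining relation $\sum_j u_{ij} = 1$, which yield $(1/n) \sum_{i,j} \tau(\alpha_1(u_{ij}) \wedge \alpha_2(u_{ij})) \leq 1$. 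For $\freedistance(\varphi,\varphi) = 0$, take the diagonal coupling $(\pi_\varphi(C(S_n^+))'',\tau_\varphi,\pi_\varphi,\pi_\varphi)$ built from the GNS construction (Lemma \ref{lem: GNS tracial von Neumann algebra}); then $\pi_\varphi(u_{ij}) \wedge \pi_\varphi(u_{ij}) = \pi_\varphi(u_{ij})$ and the sum equals $n$, so the cost vanishes.

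For the \emph{separation property}, suppose $\freedistance(\varphi_1,\varphi_2) = 0$. By Lemma \ref{lem: infimum achieved} the infimum is attained by some coupling $(M,\tau,\alpha_1,\alpha_2)$. For each $k \in \{1,2\}$ the chain $\sum_{i,j} \tau(\alpha_1(u_{ij}) \wedge \alpha_2(u_{ij})) \leq \sum_{i,j} \tau(\alpha_k(u_{ij})) = n$ is then tight, so in fact $\tau(\alpha_1(u_{ij}) \wedge \alpha_2(u_{ij})) = \tau(\alpha_k(u_{ij}))$ for every $i,j$ and every $k \in \{1,2\}$. Faithfulness of $\tau$ on $M$ forces $\alpha_1(u_{ij}) = \alpha_1(u_{ij}) \wedge \alpha_2(u_{ij}) = \alpha_2(u_{ij})$; since the $u_{ij}$ generate $C(S_n^+)$ as a $\Cst$-algebra, continuity of the $\alpha_k$ gives $\alpha_1 = \alpha_2$, and hence $\varphi_1 = \tau \circ \alpha_1 = \tau \circ \alpha_2 = \varphi_2$.

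For the \emph{triangle inequality}, given $\varphi_1,\varphi_2,\varphi_3$ and couplings $(M_{12},\tau_{12},\alpha_1^{(12)},\alpha_2^{(12)})$, $(M_{23},\tau_{23},\alpha_2^{(23)},\alpha_3^{(23)})$, let $(N,\sigma)$ denote the tracial GNS von Neumann algebra of $\varphi_2$. By Lemmas \ref{lem: quotient GNS isomorphism} and \ref{lem: law isomorphism}, the maps $\alpha_2^{(12)}$ and $\alpha_2^{(23)}$ induce trace-preserving normal $*$-embeddings $N \hookrightarrow M_{12}$ and $N \hookrightarrow M_{23}$ sending $\pi_{\varphi_2}(x)$ to $\alpha_2^{(12)}(x)$ and $\alpha_2^{(23)}(x)$ respectively. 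Forming the amalgamated free product $M := M_{12} *_N M_{23}$ over these embeddings (\S \ref{subsec: amalgamated free products}) yields a tracial von Neumann algebra $(M,\tau)$ with trace-preserving inclusions $\gamma_{12}, \gamma_{23}$ agreeing on $N$. Setting $\alpha_k := \gamma_{12} \circ \alpha_k^{(12)}$ for $k \in \{1,2\}$ and $\alpha_k := \gamma_{23} \circ \alpha_k^{(23)}$ for $k \in \{2,3\}$ is well-defined, and $(M,\tau,\alpha_1,\alpha_3)$ is a coupling of $(\varphi_1,\varphi_3)$. Fixing $i,j$ and writing $p_k = \alpha_k(u_{ij})$, Fact \ref{fact: trace of max/min of projections} applied to $p_1 \wedge p_2$ and $p_2 \wedge p_3$ gives
\[
\tau(p_1 \wedge p_2) + \tau(p_2 \wedge p_3) = \tau\bigl((p_1 \wedge p_2) \wedge (p_2 \wedge p_3)\bigr) + \tau\bigl((p_1 \wedge p_2) \vee (p_2 \wedge p_3)\bigr) \leq \tau(p_1 \wedge p_3) + \tau(p_2),
\]
using $(p_1 \wedge p_2) \wedge (p_2 \wedge p_3) = p_1 \wedge p_2 \wedge p_3 \leq p_1 \wedge p_3$ and $(p_1 \wedge p_2) \vee (p_2 \wedge p_3) \leq p_2$. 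Summing over $i,j$ and invoking $\sum_{i,j} \tau(p_2) = n$ rearranges to
\[
\bigl[ 1 - \tfrac{1}{n} \textstyle\sum \tau(p_1 \wedge p_2) \bigr] + \bigl[ 1 - \tfrac{1}{n} \textstyle\sum \tau(p_2 \wedge p_3) \bigr] \geq 1 - \tfrac{1}{n} \textstyle\sum \tau(p_1 \wedge p_3) \geq \freedistance(\varphi_1,\varphi_3),
\]
and taking infima over the two input couplings yields the triangle inequality.

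The main obstacle is the triangle inequality, and specifically the amalgamation step: one must verify that the a priori different copies of the $\varphi_2$-GNS von Neumann algebra sitting inside $M_{12}$ and $M_{23}$ are canonically isomorphic as tracial von Neumann algebras, so that amalgamation over a common $N$ makes sense and produces embeddings which agree on $\alpha_2^{(12)}(C(S_n^+))$ and $\alpha_2^{(23)}(C(S_n^+))$. Once this identification is set up, the projection-theoretic inequality $\tau(p_1 \wedge p_2) + \tau(p_2 \wedge p_3) \leq \tau(p_1 \wedge p_3) + \tau(p_2)$ — a direct consequence of the parallelogram-type identity $\tau(p \wedge q) + \tau(p \vee q) = \tau(p) + \tau(q)$ of Fact \ref{fact: trace of max/min of projections}, which genuinely uses traciality — makes the remainder mechanical.
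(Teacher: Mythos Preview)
Your proof is correct and follows essentially the same approach as the paper: the diagonal GNS coupling for zero self-distance, Lemma \ref{lem: infimum achieved} plus faithfulness for separation, and the amalgamated free product over the common $\varphi_2$-GNS von Neumann algebra (identified via Lemma \ref{lem: law isomorphism}) together with the projection inequality $\tau(p_1 \wedge p_2) + \tau(p_2 \wedge p_3) \leq \tau(p_1 \wedge p_3) + \tau(p_2)$ from Fact \ref{fact: trace of max/min of projections} for the triangle inequality. The only cosmetic difference is that you phrase the amalgamation as embedding a fixed $N = \pi_{\varphi_2}(C(S_n^+))''$ into each side, whereas the paper directly identifies the two subalgebras $\alpha_2(C(S_n^+))'' \cong \beta_1(C(S_n^+))''$; these are equivalent formulations.
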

	
	\begin{proof}
		
		\textbf{Nonnegativity:}  Let $\varphi_1, \varphi_2$ be traces in $\mathcal{T}(C(S_n^+))$, and let $(M,\tau,\alpha_1,\alpha_2)$ be a tracial coupling.  Then
		\begin{align*} 
			& 1- \frac{1}{n} \sum_{i,j=1}^n \tau(\alpha_1(u_{ij}) \wedge \alpha_2(u_ij)) \\
			& =  \sum_{i,j=1}^n \frac{1}{n}(\tau(\alpha_1(u_{ij})) -  \tau(\alpha_1(u_{ij}) \wedge \alpha_2(u_ij))) \geq 0
		\end{align*}
		because $\alpha_1(u_{ij}) \geq (\alpha_1(u_{ij}) \wedge \alpha_2(u_{ij}))$ for $i, j = 1, \dots, n$.  Hence, taking the infimum over tracial couplings, we have $\freedistance(\varphi_1,\varphi_2) \geq 0$.
		
		\textbf{Nondegeneracy:}  We show $\freedistance(\varphi_1,\varphi_2) = 0$ if and only if $\varphi_1 = \varphi_2$.  If $\varphi_1 = \varphi_2$, then we can take $(M,\tau)$ to be the von Neumann algebra generated by the GNS construction of $(C(S_n^+),\varphi_1)$ as in Lemma \ref{lem: GNS tracial von Neumann algebra}, and let $\alpha_1 = \alpha_2$ be the GNS representation itself.  Then $\alpha_1(u_{ij}) \wedge \alpha_2(u_{ij}) = \alpha_1(u_{ij})$, and so
		\[
		1 - \frac{1}{n} \sum_{i,j=1}^n \tau(\alpha_1(u_{ij}) \wedge \alpha_2(u_{ij})) = 1 - \frac{1}{n} \sum_{i,j=1}^n \tau(\alpha_1(u_{ij})) = 1 - \frac{1}{n} \sum_{i=1}^n 1 = 0,
		\]
		so $\freedistance(\varphi_1,\varphi_1) = 0$.
		
		Conversely, suppose that $\freedistance(\varphi_1,\varphi_2) = 0$. By Lemma \ref{lem: infimum achieved}, there is a tracial coupling $(M,\tau, \alpha_1, \alpha_2)$ such that
		\[
		0 = \freedistance(\varphi_1,\varphi_2) = 1 - \frac{1}{n} \sum_{i,j =1}^n \tau(\alpha_1(u_{ij}) \wedge \alpha_2(u_{ij})) =  \frac{1}{n}  \sum_{i,j=1}^n (\tau(\alpha_1(u_{ij})) - \tau(\alpha_1(u_{ij}) \wedge \alpha_2(u_ij))).
		\]
		Since we are adding nonnegative elements, we get $\tau(\alpha_1(u_{ij}) - \alpha_1(u_{ij}) \wedge \alpha_2(u_{ij})) = 0$ for all  $i,j$. Thus, faithfulness of $\tau$ implies that $\alpha_1(u_{ij}) = \alpha_1(u_{ij}) \wedge \alpha_2(u_{ij})$ for all $i,j$.
		Symmetrically, \( \alpha_2(u_{ij}) = \alpha_1(u_{ij}) \wedge \alpha_2(u_{ij})\), and therefore $\alpha_1(u_{ij}) = \alpha_2 (u_{ij})\) for all $i,j$.  Since $\{u_{ij}\}_{i,j}$ generate $C(S_n^+)$, we have $\alpha_1 = \alpha_2$, hence $\varphi_1 = \tau \circ \alpha_1 = \tau \circ \alpha_2 = \varphi_2$.
		
		\textbf{Symmetry:} Observe that
		$(M,\tau,\alpha_1,\alpha_2)$ is a coupling of $(\varphi_1,\varphi_2)$ if and only if $(M,\tau,\alpha_2,\alpha_1)$ is a coupling of $(\varphi_2,\varphi_1)$ and $\alpha_1(u_{ij}) \wedge \alpha_2(u_{ij}) = \alpha_2(u_{ij}) \wedge \alpha_1(u_{ij})$. Therefore,
		\begin{align*}
			\freedistance(\varphi_1,\varphi_2) & = \inf_{\text{tracial couplings } (M,\tau,\alpha_1,\alpha_2)} \left( 1 - \frac{1}{n} \sum_{i,j=1}^n \tau(\alpha_1(u_{ij}) \wedge \alpha_2(u_{ij})) \right) \\
			& = \inf_{\text{tracial couplings } (M,\tau,\alpha_2,\alpha_1)} \left( 1 - \frac{1}{n} \sum_{i,j=1}^n \tau(\alpha_2(u_{ij}) \wedge \alpha_1(u_{ij})) \right)\\
			& = \freedistance(\varphi_2, \varphi_1).
		\end{align*}
		
		\textbf{Triangle inequality:}  Let $\varphi_1, \varphi_2, \varphi_3 \in \mathcal{T}(C(S_n^+))$ and consider couplings $(M_1, \tau_1, \alpha_1, \alpha_2), (M_2, \tau_2, \beta_1, \beta_2)$.  Since $\tau_1 \circ \alpha_2 = \varphi_2 = \tau_2 \circ \beta_1$, the von Neumann subalgebras generated of $M_1$ and $M_2$ generated respectively by $\alpha_2(C(S_n^+))$ and $\beta_1(C(S_n^+))$ are $*$-isomorphic in a trace-preserving way by Lemma \ref{lem: law isomorphism}. Therefore, there is a free product $(M,\tau)$ of $(M_1,\tau_1)$ and $(M_2,\tau_2)$ with amalgamation over $\alpha_2(C(S_n^+)) \cong \beta_1(C(S_n^+))$.  Let $\eta_j$ be the inclusion of $M_j$ into $M$.  Let
		\begin{align*}
			\gamma_1 &= \eta_1 \circ \alpha_1: C(S_n^+) \to M \\
			\gamma_2 &= \eta_1 \circ \alpha_2 = \eta_2 \circ \beta_1: C(S_n^+) \to M \\
			\gamma_3 &= \eta_2 \circ \beta_2: C(S_n^+) \to M
		\end{align*}
		Then $\tau \circ \gamma_1 = \tau \circ \eta_1 \circ \alpha_1 = \tau_1 \circ \alpha_1 = \varphi_1$.  Similarly, $\tau \circ \gamma_2 = \tau \circ \eta_2 \circ \beta_2 = \tau_2 \circ \beta_2 = \varphi_3$.  Therefore, $(M,\tau,\gamma_1,\gamma_3)$ is a tracial coupling of $(\varphi_1,\varphi_3)$.
		
		For each $i,j = 1, \dots n$ and $k = 1, 2, 3$, the element $\gamma_k(u_{ij})$ is a projection, and therefore their maximum and minimum are also projections. By Fact \ref{fact: trace of max/min of projections}, for each $i,j$, we have
		\begin{equation}\label{rel-2couplings}
			\begin{split}
				\tau(\gamma_1(u_{ij}) \wedge \gamma_2(u_{ij})) + \tau(\gamma_2(u_{ij}) \wedge \gamma_3(u_{ij})) & = \tau(\gamma_1(u_{ij}) \wedge \gamma_2(u_{ij})) \wedge ( \gamma_2(u_{ij}) \wedge \gamma_3(u_{ij}))  \\
				& + \tau((\gamma_1(u_{ij}) \wedge \gamma_2(u_{ij})) \vee  (\gamma_2(u_{ij}) \wedge \gamma_3(u_{ij})) \\
				& \leq \tau(\gamma_1(u_{ij}) \wedge \gamma_3(u_{ij})) + \tau(\gamma_2(u_{ij}))
			\end{split}
		\end{equation}
		since 
		$\gamma_1(u_{ij}) \wedge \gamma_2(u_{ij}) \wedge \gamma_3(u_{ij}) \leq \gamma_1(u_{ij}) \wedge \gamma_3(u_{ij})$ and $ (\gamma_1(u_{ij}) \wedge \gamma_2(u_{ij})) \vee (\gamma_2(u_{ij}) \wedge \gamma_3(u_{ij}) )\leq \gamma_2(u_{ij})$
		and $\tau$ is positive.  Therefore, by \eqref{rel-2couplings}, 
		\[
		- \frac{1}{n} \sum_{i,j=1}^n \tau(\gamma_1(u_{ij}) \wedge \gamma_3(u_{ij})) - \frac{1}{n} \sum_{i,j=1}^n \tau(\gamma_2(u_{ij})) \leq - \frac{1}{n} \sum_{i,j=1}^n \tau(\gamma_1(u_{ij}) \wedge \gamma_2(u_{ij})) - \frac{1}{n} \sum_{i,j=1}^n \tau(\gamma_2(u_{ij}) \wedge \gamma_3(u_{ij})).
		\]
		By adding 1 both sides and using $\sum_{i,j=1}^n \tau(\gamma_2(u_{ij})) = 1$, we get 
		\begin{align}\label{rel-rewriting}
			1 - \frac{1}{n} \sum_{i,j=1}^n \tau(\gamma_1(u_{ij}) \wedge \gamma_3(u_{ij})) &\leq 1 - \frac{1}{n} \sum_{i,j=1}^n \tau(\gamma_1(u_{ij}) \wedge \gamma_2(u_{ij})) + 1 - \frac{1}{n}\sum_{i,j=1}^n \tau(\gamma_2(u_{ij}) \wedge \gamma_3(u_{ij})) \\
			&= 1 - \frac{1}{n} \sum_{i,j=1}^n \tau_1(\alpha_1(u_{ij}) \wedge \alpha_2(u_{ij})) + 1 - \frac{1}{n}\sum_{i,j=1}^n \tau_2(\beta_1(u_{ij}) \wedge \beta_2(u_{ij})),
		\end{align}
		where in the last step we used that the embeddings of $M_1$ and $M_2$ into $M$ are trace-preserving.  Since the tracial couplings of $(\varphi_1,\varphi_2)$ and $(\varphi_2,\varphi_3)$ were arbitrary, \cref{lemma:tracialcouplingequivalent} and taking the infimum yields
		\[
		\freedistance(\varphi_1,\varphi_3) \leq \freedistance(\varphi_1,\varphi_2) + \freedistance(\varphi_2,\varphi_3). \qedhere
		\]
	\end{proof}
	
	We next show the properties of $\freedistance$ with respect to the weak-$*$ topology.
	
	\begin{lemma} \label{lem: weak* semicontinuity 1}
		$\freedistance$ is a weak-$*$ lower semicontinuous function on $\mathcal{T}(C(S_n^+)) \times \mathcal{T}(C(S_n^+))$.  Moreover, if $\freedistance(\varphi_k,\varphi) \to 0$, then $\varphi_k \to \varphi$ in the weak-$*$ topology.
	\end{lemma}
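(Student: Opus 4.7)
The plan is to prove the two assertions separately, using the reformulation from Lemma \ref{lemma:hammingcostexpression}:
\[
\freedistance(\varphi_1, \varphi_2) = \inf_{\psi \in \mathcal{T}_{\varphi_1, \varphi_2}(A * A)} \sup_l F_l(\psi), \qquad F_l(\psi) := 1 - \frac{1}{n} \sum_{ij} \psi\bigl((\iota_1(u_{ij})\iota_2(u_{ij}))^l\bigr),
\]
where each $F_l$ is weak-$*$ continuous on $\mathcal{T}(A * A)$.

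For lower semicontinuity, suppose $\varphi_1^{(k)} \to \varphi_1$ and $\varphi_2^{(k)} \to \varphi_2$ weak-$*$, and first pass to a subsequence realizing $\liminf_k \freedistance(\varphi_1^{(k)}, \varphi_2^{(k)})$. By Lemma \ref{lem: infimum achieved} together with Lemma \ref{lemma:tracialcouplingequivalent}, I can choose $\psi^{(k)} \in \mathcal{T}_{\varphi_1^{(k)}, \varphi_2^{(k)}}(A * A)$ achieving each infimum. Since $C(S_n^+) * C(S_n^+)$ is separable, its tracial state space is weak-$*$ metrizable and sequentially compact, so a further subsequence satisfies $\psi^{(k)} \to \psi$ weak-$*$. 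Testing against $\iota_j(a)$ gives $\psi \circ \iota_j = \lim_k \psi^{(k)} \circ \iota_j = \lim_k \varphi_j^{(k)} = \varphi_j$, so $\psi \in \mathcal{T}_{\varphi_1, \varphi_2}(A * A)$. As $\sup_l F_l$ is a supremum of weak-$*$-continuous functions and hence weak-$*$ lower semicontinuous,
\[
\freedistance(\varphi_1, \varphi_2) \leq \sup_l F_l(\psi) \leq \liminf_k \sup_l F_l(\psi^{(k)}) = \liminf_k \freedistance(\varphi_1^{(k)}, \varphi_2^{(k)}).
\]

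For the second assertion, suppose $\freedistance(\varphi_k, \varphi) \to 0$; I will bound the $L^2$-distance between the two embeddings of each $u_{ij}$ by the cost and then propagate it to arbitrary $*$-polynomials. The key estimate is that for projections $p, q$ in a tracial von Neumann algebra, Fact \ref{fact: stronglimitpqp} gives $\tau(pq) = \tau(pqp) \geq \tau(p \wedge q)$, so
\[
\norm{p - q}_{L^2(\tau)}^2 = \tau(p) + \tau(q) - 2\tau(pq) \leq \tau(p) + \tau(q) - 2\tau(p \wedge q).
\]
Applying this in a coupling $(M_k, \tau_k, \alpha_1^{(k)}, \alpha_2^{(k)})$ realizing $\freedistance(\varphi_k, \varphi)$ (Lemma \ref{lem: infimum achieved}), summing over $i, j$ and using $\sum_i u_{ij} = 1$, yields $\sum_{ij} \norm{\alpha_1^{(k)}(u_{ij}) - \alpha_2^{(k)}(u_{ij})}_{L^2(\tau_k)}^2 \leq 2n \cdot \freedistance(\varphi_k, \varphi) \to 0$. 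Telescoping with $\norm{xy}_{L^2(\tau_k)} \leq \norm{x} \norm{y}_{L^2(\tau_k)}$, together with $\norm{\alpha_j^{(k)}(u_{ij})} \leq 1$ (projections), extends this to $\norm{\alpha_1^{(k)}(w) - \alpha_2^{(k)}(w)}_{L^2(\tau_k)} \to 0$ for every $*$-polynomial $w$ in the $u_{ij}$. Then $|\varphi_k(w) - \varphi(w)| \leq \norm{\alpha_1^{(k)}(w) - \alpha_2^{(k)}(w)}_{L^2(\tau_k)} \to 0$ via $|\tau(x)| \leq \norm{x}_{L^2(\tau)}$. Since the $*$-algebra generated by the $u_{ij}$ is dense in $C(S_n^+)$ and the $\varphi_k$ are contractive, this forces weak-$*$ convergence.

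I do not anticipate a substantive obstacle. The lower semicontinuity is a routine compactness argument once Lemma \ref{lemma:hammingcostexpression} is in hand, and the convergence implication reduces to the wedge-sum inequality $\tau(pq) \geq \tau(p \wedge q)$, which is immediate from Fact \ref{fact: stronglimitpqp} because $\tau((pqp)^k)$ decreases monotonically to $\tau(p \wedge q)$. The only mildly delicate point is using separability of $C(S_n^+) * C(S_n^+)$ to justify sequential compactness of its trace space.
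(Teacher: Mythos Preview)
Your argument for lower semicontinuity is essentially the same as the paper's: both extract a weak-$*$ convergent subsequence of optimal couplings in the compact set $\mathcal{T}(A*A)$, check that the limit lies in $\mathcal{T}_{\varphi_1,\varphi_2}(A*A)$, and invoke lower semicontinuity of $\sup_l F_l$.

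For the second assertion your route differs from the paper's. The paper argues abstractly: by compactness and metrizability of $\mathcal{T}(C(S_n^+))$ it suffices that every subsequence of $(\varphi_k)$ has a further subsequence converging to $\varphi$; any weak-$*$ limit point $\psi$ then satisfies $\freedistance(\psi,\varphi)\leq\liminf\freedistance(\varphi_k,\varphi)=0$ by the lower semicontinuity just proved, whence $\psi=\varphi$ by nondegeneracy (Proposition~\ref{prop: freedistance is a metric}). You instead give a direct quantitative estimate: from $\tau(pq)=\tau(pqp)\geq\tau(p\wedge q)$ you bound $\sum_{i,j}\norm{\alpha_1(u_{ij})-\alpha_2(u_{ij})}_{L^2}^2$ by $2n\,\freedistance(\varphi_k,\varphi)$, telescope to control monomials, and conclude $\varphi_k(w)\to\varphi(w)$ on the dense $*$-algebra generated by the $u_{ij}$. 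This is correct and in fact anticipates the content of Proposition~\ref{prop: Lipschitz dense}, which the paper proves only later. The paper's approach is shorter and reuses the first half of the lemma together with nondegeneracy; your approach is self-contained, avoids appealing to compactness of $\mathcal{T}(C(S_n^+))$, and yields an explicit rate of convergence on generators.
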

	
	\begin{proof}
		Since $\mathcal{T}(C(S_n^+))$ is compact and metrizable, it suffices to show that, given sequences $\varphi_{1,m} \to \varphi_1$ and $\varphi_{2,m} \to \varphi_2$, we have
		\[
		\freedistance(\varphi_1,\varphi_2) \leq \liminf_{m \to \infty} \freedistance(\varphi_{1,m}, \varphi_{2,m}).
		\]
		Let $f$ be as in the proof of Lemma \ref{lem: infimum achieved}, and let $\psi_m \in \mathcal{T}_{\varphi_{1,m},\varphi_{2,m}}$ achieve the infimum of $f$ over $\mathcal{T}_{\varphi_{1,m},\varphi_{2,m}}(A * A)$.  Since $\mathcal{T}(A * A)$ is compact, there exists a subsequence $\psi_{m_\ell}$ that weak-$*$ converges to some $\psi \in \mathcal{T}(A * A)$.  Note that
		\[
		\psi \circ \iota_j = \lim_{\ell \to \infty} \psi_{m_\ell} \circ \iota_j = \lim_{\ell \to \infty} \varphi_{j,m_\ell} = \varphi_j \text{ for } j = 1, 2,
		\]
		hence $\psi \in \mathcal{T}_{\varphi_1,\varphi_2}(A * A)$.  Then
		\[
		\freedistance(\varphi_1,\varphi_2) \leq f(\psi) \leq \liminf_{\ell \to \infty} f(\psi_{m_\ell}) \leq \liminf_{m \to \infty} f(\psi_m) = \liminf_{m \to \infty} \freedistance(\varphi_{1,m},\varphi_{2,m}),
		\]
		which completes the proof of lower semi-continuity.
		
		For the second claim, suppose $\varphi_k \to \varphi$ with respect to $\freedistance$.  Since $\mathcal{T}(C(S_n^+))$ is compact and metrizable, it suffices to show that for every subsequence of $\varphi_k$ has a further subsequence that converges to $\varphi$.  If $\varphi_{k_\ell}$ is a subsequence, pass to a subsequence that converges to some $\psi$.  By lower semi-continuity, $\freedistance(\psi,\varphi) \leq \liminf_{\ell \to \infty} \freedistance(\varphi_{k_\ell},\varphi) = 0$, and hence $\psi = \varphi$ by non-degeneracy of the metric.
	\end{proof}

	\subsection{Behavior under convolution}
	
	Recall that the comultiplication map $\Delta$ on $C(S_n^+)$, is given by  $\Delta(u_{ij}):= \sum_{k=1}^n u_{ik} \otimes u_{kj}$ and the convolution of two states $\tau, \sigma \in \mathcal{T}(C(S_n^+)) \in \mathcal{T}(M \otimes N)$.
	\[
	\tau * \sigma = (\tau \otimes \sigma) \circ \Delta.
	\]
	
	\begin{proposition} \label{prop: convolution}
		Let $\varphi_1$, $\varphi_2$, $\psi_1$, $\psi_2  \in \mathcal{T}(C(S_n^+))$.  Then
		\[
		\freedistance(\varphi_1 * \psi_1, \varphi_2 * \psi_2) \leq \freedistance(\varphi_1,\varphi_2) + \freedistance(\psi_1,\psi_2).
		\]
	\end{proposition}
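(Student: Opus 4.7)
The plan is to construct a tracial coupling of $(\varphi_1 * \psi_1, \varphi_2 * \psi_2)$ by taking the tensor product of given couplings of $(\varphi_1,\varphi_2)$ and $(\psi_1,\psi_2)$ and pushing forward along the comultiplication. Let $(M_1,\tau_1,\alpha_1,\alpha_2)$ and $(M_2,\tau_2,\beta_1,\beta_2)$ be tracial couplings of $(\varphi_1,\varphi_2)$ and $(\psi_1,\psi_2)$, respectively. On $(M_1 \overline{\otimes} M_2,\tau_1 \otimes \tau_2)$, define $\gamma_k := (\alpha_k \otimes \beta_k) \circ \Delta$ for $k = 1,2$. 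This is a $*$-homomorphism $C(S_n^+) \to M_1 \overline{\otimes} M_2$, and a direct computation gives $(\tau_1 \otimes \tau_2) \circ \gamma_k = (\varphi_k \otimes \psi_k) \circ \Delta = \varphi_k * \psi_k$, so $(M_1 \overline{\otimes} M_2, \tau_1 \otimes \tau_2, \gamma_1, \gamma_2)$ is a tracial coupling of $(\varphi_1 * \psi_1, \varphi_2 * \psi_2)$.

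Next I estimate the cost of this coupling. Observe that for fixed $k$ and $i$, the projections $\{\alpha_k(u_{i\ell})\}_\ell$ are pairwise orthogonal (being images under a $*$-homomorphism of the pairwise orthogonal projections $\{u_{i\ell}\}_\ell$ that sum to $1$). Hence the decomposition
\[
\gamma_k(u_{ij}) = \sum_{\ell=1}^n \alpha_k(u_{i\ell}) \otimes \beta_k(u_{\ell j})
\]
is a sum of pairwise orthogonal projections in $M_1 \overline{\otimes} M_2$. Applying Fact \ref{fact: min versus sum} followed by Fact \ref{fact: tensor versus min}, and using that $\tau_1 \otimes \tau_2$ splits on elementary tensors, I obtain
\[
(\tau_1 \otimes \tau_2)\bigl(\gamma_1(u_{ij}) \wedge \gamma_2(u_{ij})\bigr) \geq \sum_{\ell=1}^n a_{i\ell} \, b_{\ell j},
\]
where $a_{i\ell} := \tau_1(\alpha_1(u_{i\ell}) \wedge \alpha_2(u_{i\ell}))$ and $b_{\ell j} := \tau_2(\beta_1(u_{\ell j}) \wedge \beta_2(u_{\ell j}))$.

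Setting $r_\ell := \sum_{i} a_{i\ell}$ and $c_\ell := \sum_{j} b_{\ell j}$, the bound rearranges to
\[
\frac{1}{n}\sum_{i,j} (\tau_1 \otimes \tau_2)\bigl(\gamma_1(u_{ij}) \wedge \gamma_2(u_{ij})\bigr) \geq \frac{1}{n}\sum_{\ell} r_\ell c_\ell.
\]
The stochasticity relations $\sum_i \alpha_k(u_{i\ell}) = 1 = \sum_j \beta_k(u_{\ell j})$ (combined with $a_{i\ell} \leq \tau_1(\alpha_1(u_{i\ell}))$ and similarly for $b$) give $r_\ell, c_\ell \in [0,1]$. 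Consequently $(1-r_\ell)(1-c_\ell) \geq 0$, which is the key algebraic inequality $r_\ell + c_\ell - r_\ell c_\ell \leq 1$. Summing over $\ell$ and dividing by $n$ yields
\[
1 - \frac{1}{n}\sum_\ell r_\ell c_\ell \;\leq\; \Bigl(1 - \frac{1}{n}\sum_\ell r_\ell\Bigr) + \Bigl(1 - \frac{1}{n}\sum_\ell c_\ell\Bigr),
\]
and the right-hand side equals the sum of the costs of the two given couplings. Taking the infimum over the two couplings and invoking Lemma \ref{lem: infimum achieved} (or simply the definition of infimum) gives the desired inequality.

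The one place to be careful is verifying that all orthogonality and lattice identities used for the inequality involving $\wedge$ and $\otimes$ apply as stated in Facts \ref{fact: min versus sum} and \ref{fact: tensor versus min}; once those are in hand, the rest is a clean computation, and the final scalar inequality $r + c - rc \leq 1$ for $r,c \in [0,1]$ is trivial. There is no need to appeal to amalgamated free products here, in contrast with the triangle inequality proof.
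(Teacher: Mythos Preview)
Your proof is correct and follows essentially the same route as the paper: form the tensor-product coupling $\gamma_k=(\alpha_k\otimes\beta_k)\circ\Delta$, apply Facts~\ref{fact: min versus sum} and~\ref{fact: tensor versus min} to bound $\gamma_1(u_{ij})\wedge\gamma_2(u_{ij})$ from below by $\sum_\ell (\alpha_1(u_{i\ell})\wedge\alpha_2(u_{i\ell}))\otimes(\beta_1(u_{\ell j})\wedge\beta_2(u_{\ell j}))$, and finish with the same scalar inequality. Your packaging of the last step via $(1-r_\ell)(1-c_\ell)\ge 0$ is in fact a bit cleaner than the paper's equivalent chain of add-and-subtract manipulations.
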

	
	\begin{proof}
		Consider tracial couplings $(M, \tau, \alpha_1, \alpha_2)$ for $(\varphi_1,\varphi_2)$ and $(N, \sigma, \beta_1, \beta_2)$ for $(\psi_1,\psi_2)$.  Consider the tracial von Neumann algebra $(M \overline{\otimes} N, \tau \otimes \sigma)$, and define $\gamma_1, \gamma_2: C(S_n^+) \to M \overline{\otimes} N$ by $\gamma_j := (\alpha_j \otimes \beta_j) \circ \Delta$.  For $j = 1, 2$, we have
		\begin{align*}
			(\tau \otimes \sigma) \circ \gamma_j & = (\tau \otimes \sigma) \circ (\alpha_j \otimes \beta_j) \circ \Delta \\
			& = ((\tau \circ \alpha_j) \otimes (\sigma \circ \beta_j)) \circ \Delta \\
			& = (\varphi_j \otimes \psi_j) \circ \Delta\\
			& = \varphi_j * \psi_j.
		\end{align*}
		Therefore, $(N \overline{\otimes} M, \tau \otimes \sigma, \gamma_1, \gamma_2)$ is a tracial coupling for $\varphi_1 * \psi_1$ and $\varphi_2 * \psi_2$. For any $i,j \in [n]$, note that $\alpha_1(u_{ik}) \otimes \beta_1(u_{kj})$ and $\alpha_1(u_{ik'}) \otimes \beta_1(u_{k'j})$ are orthogonal for $k \neq k'$ (and similarly with $\alpha_2$ and $\beta_2$ instead of $\alpha_1$ and $\beta_1$), and so we compute using Facts \ref{fact: min versus sum} and \ref{fact: tensor versus min} that
		\begin{align*}
			\gamma_1(u_{ij}) \wedge \gamma_2(u_{ij}) & =  (\alpha_1 \otimes \beta_1)\Delta(u_{ij}) \wedge (\alpha_2 \otimes \beta_2)\Delta(u_{ij})\\
			& =  (\alpha_1 \otimes \beta_1) \left(\sum_{k=1}^n u_{ik} \otimes u_{kj} \right) \wedge (\alpha_2 \otimes \beta_2)\left(\sum_{k=1}^n u_{ik} \otimes u_{kj}\right)\\
			& = \left( \sum_{k=1}^n (\alpha_1(u_{ik}) \otimes \beta_1(u_{kj}) ) \right) \wedge \left( \sum_{k=1}^n (\alpha_2(u_{ik}) \otimes \beta_2(u_{kj})) \right) \\
			& \geq \sum_{k=1}^n (\alpha_1(u_{ik}) \otimes \beta_1(u_{ik})) \wedge (\alpha_2(u_{ik})) \otimes \beta_2(u_{kj})) \\
			&= \sum_{k=1}^n (\alpha_1(u_{ik}) \wedge \alpha_2(u_{ik})) \otimes (\beta_1(u_{kj}) \wedge \beta_2(u_{kj})).
		\end{align*}
		By positivity of $\tau \otimes \sigma$, 
		\begin{align*}
			(\tau \otimes \sigma)(\gamma_1 (u_{ij}) \wedge \gamma_2(u_{ij})) & \geq \sum_{k=1}^n (\tau \otimes \sigma) ((\alpha_1(u_{ik}) \wedge \alpha_2(u_{ik})) \otimes (\beta_1(u_{kj}) \wedge \beta_2(u_{kj}))\\
			& = \sum_{k=1}^n \tau (\alpha_1(u_{ik}) \wedge \alpha_2(u_{ik})) \sigma(\beta_1(u_{kj}) \wedge \beta_2(u_{kj}))
		\end{align*}
		Therefore, 
		\begin{align*}
			& 1 - \frac{1}{n} \sum_{i,j=1}^n (\tau \otimes \sigma)(\gamma_1 (u_{ij}) \wedge \gamma_2(u_{ij})) \\
			& \leq 1 - \frac{1}{n} \sum_{i,k=1}^n  \tau (\alpha_1(u_{ik}) \wedge \alpha_2(u_{ik})) + \frac{1}{n} \sum_{i,k=1}^n \tau (\alpha_1(u_{ik}) \wedge \alpha_2(u_{ik})) \\
			& \quad -\frac{1}{n} \sum_{i,j,k=1}^n \tau (\alpha_1(u_{ik}) \wedge \alpha_2(u_{ik})) \sigma(\beta_1(u_{kj}) \wedge \beta_2(u_{kj})) \\
			& \leq 1 - \frac{1}{n} \sum_{i,k=1}^n  \tau (\alpha_1(u_{ik}) \wedge \alpha_2(u_{ik})) + \frac{1}{n} \sum_{i,k=1}^n \tau (\alpha_1(u_{ik}) \wedge \alpha_2(u_{ik})) (1 -  \sum_{j=1}^n \sigma(\beta_1(u_{kj} ) \wedge \beta_2(u_{kj})) \\
			& \leq 1 - \frac{1}{n} \sum_{i,k=1}^n  \tau (\alpha_1(u_{ik}) \wedge \alpha_2(u_{ik}))  + 1 - \frac{1}{n} \sum_{k, j=1}^n \sigma(\beta_1(u_{kj} ) \wedge \beta_2(u_{kj})).
		\end{align*}
		Since the couplings of $(\varphi_1,\varphi_2)$ and $(\psi_1,\psi_2)$ were arbitrary, we get the required inequality. The small thing to note here is the identification for the state as mentioned in the paragraph before \cref{prop: convolution}.
	\end{proof}

	\subsection{Distance via $L^1$-norm}
	
	Recall the second distance $\ldistance$ from Definition
	\ref{def:l1distace} given by 
	\[
	\ldistance(\varphi_1, \varphi_2) = \inf_{(M,\tau, \alpha_1, \alpha_2)}  \frac{1}{n} \sum_{i,j=1}^n \frac{1}{2} \left \| \alpha_1(u_{ij}) - \alpha_2(u_{ij}) \right \|_{L^1(M, \tau)},
	\]
	where $L^1(M,\tau)$ is the non-commutative $L^1$ space (see \S \ref{subsec: vN basic}).
	
	\begin{lemma}\label{lem:inf-l1distance}
		The infimum in \cref{def:l1distace} is achieved. 
	\end{lemma}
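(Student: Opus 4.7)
The plan is to mirror the proof of Lemma \ref{lem: infimum achieved} by reformulating the infimum over tracial couplings as an infimum over the weak-$*$ compact set $\mathcal{T}_{\varphi_1,\varphi_2}(A * A)$, where $A = C(S_n^+)$. The crucial simplification, compared to the $\freedistance$ case, is that the $L^1$-norm $\norm{\alpha_1(u_{ij}) - \alpha_2(u_{ij})}_{L^1(M,\tau)}$ does not require a supremum over $k$, because $\iota_1(u_{ij}) - \iota_2(u_{ij})$ is already self-adjoint, so its absolute value is an honest element of the $\Cst$-algebra $A * A$.

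In detail, first I observe that for each $i,j$, the element $x_{ij} := \iota_1(u_{ij}) - \iota_2(u_{ij})$ is self-adjoint in $A * A$. Hence by continuous functional calculus applied in $A * A$, the element $|x_{ij}| = (x_{ij}^2)^{1/2}$ lies in the unital $\Cst$-subalgebra generated by $x_{ij}$, and in particular in $A * A$. I then define
\[
G(\varphi) := \frac{1}{2n} \sum_{i,j=1}^n \varphi(|x_{ij}|), \qquad \varphi \in \mathcal{T}(A * A).
\]
Because each $|x_{ij}|$ is a fixed element of $A * A$, the functional $G$ is weak-$*$ continuous on $\mathcal{T}(A * A)$.

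Next I verify that $G$ computes the $L^1$-cost under the correspondence of Lemma \ref{lemma:tracialcouplingequivalent}. Given any tracial coupling $(M,\tau,\alpha_1,\alpha_2)$ with associated $\varphi = \tau \circ (\alpha_1 * \alpha_2) \in \mathcal{T}_{\varphi_1,\varphi_2}(A * A)$, the $*$-homomorphism $\alpha_1 * \alpha_2 : A * A \to M$ intertwines continuous functional calculus, so
\[
|\alpha_1(u_{ij}) - \alpha_2(u_{ij})| = (\alpha_1 * \alpha_2)(|x_{ij}|),
\]
and therefore
\[
\norm{\alpha_1(u_{ij}) - \alpha_2(u_{ij})}_{L^1(M,\tau)} = \tau\bigl((\alpha_1 * \alpha_2)(|x_{ij}|)\bigr) = \varphi(|x_{ij}|).
\]
Summing yields $\frac{1}{2n} \sum_{i,j} \norm{\alpha_1(u_{ij}) - \alpha_2(u_{ij})}_{L^1(M,\tau)} = G(\varphi)$. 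Combining this with Lemma \ref{lemma:tracialcouplingequivalent} (and the fact that any tracial coupling is equivalent to the GNS construction from some $\varphi \in \mathcal{T}_{\varphi_1,\varphi_2}(A * A)$) gives
\[
\ldistance(\varphi_1,\varphi_2) = \inf_{\varphi \in \mathcal{T}_{\varphi_1,\varphi_2}(A * A)} G(\varphi).
\]

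Finally, the set $\mathcal{T}_{\varphi_1,\varphi_2}(A * A)$ is weak-$*$ closed in the compact set $\mathcal{T}(A * A)$ since the marginal conditions $\varphi \circ \iota_j = \varphi_j$ are weak-$*$ closed. Hence it is weak-$*$ compact, and the weak-$*$ continuous function $G$ attains its infimum on it at some $\varphi^*$. Applying Lemma \ref{lemma:tracialcouplingequivalent}(1) to $\varphi^*$ produces a tracial coupling achieving the infimum in Definition \ref{def:l1distace}. The main (minor) subtlety is simply remembering that $|x_{ij}| \in A * A$ lives already at the $\Cst$-algebra level; once this is noted, no polynomial approximation or supremum argument is required, and the weak-$*$ continuity of $G$ follows for free.
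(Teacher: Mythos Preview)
Your proof is correct and follows essentially the same approach as the paper: both define the element $C_1 = \frac{1}{2n}\sum_{i,j}|\iota_1(u_{ij}) - \iota_2(u_{ij})| \in A * A$, observe that the cost of a tracial coupling equals $\varphi(C_1)$ for the associated $\varphi \in \mathcal{T}_{\varphi_1,\varphi_2}(A * A)$, and conclude by weak-$*$ continuity on the compact set $\mathcal{T}_{\varphi_1,\varphi_2}(A * A)$. Your version is slightly more explicit about why $|x_{ij}|$ lies in $A * A$ and why the $*$-homomorphism preserves the absolute value, but the argument is the same.
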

	
	\begin{proof}
		Let $C_1$ be the element of the universal free product $A * A$ given by
		\[
		C_1 = \frac{1}{2n} \sum_{i,j=1}^n |\iota_1(u_{ij}) - \iota_2(u_{ij})|,
		\]
		where $|x| = (x^*x)^{1/2}$.  If $(M,\tau,\alpha_1,\alpha_2)$ is a tracial coupling and $\varphi \in \mathcal{T}_{\varphi_1,\varphi_2}(A * A)$ is the corresponding state, then
		\[
		\inf_{(M,\tau, \alpha_1, \alpha_2)}  \frac{1}{n} \sum_{i,j=1}^n \frac{1}{2} \left \| \alpha_1(u_{ij}) - \alpha_2(u_{ij}) \right \|_{L^1(M, \tau)} = \varphi(C_1).
		\]
		Since $\varphi \mapsto \varphi(C_1)$ is weak-$*$ continuous and $\mathcal{T}_{\varphi_1,\varphi_2}(A * A)$ is compact, the infimum is achieved.
	\end{proof}
	
	We now record the properties of $\ldistance$ analogous to those of $\freedistance$.
	
	\begin{proposition} \label{prop: L distance properties}
		Let $\varphi_1$, $\varphi_2$, $\psi_1$, $\psi_2 \in \mathcal{T}(C(S_n^+))$.
		\begin{enumerate}
			\item $\ldistance$ is a metric on $\cT(C(S_n^+))$.
			\item $\ldistance$ is weak-$*$ lower semi-continuous on $\mathcal{T}(C(S_n^+)) \times \mathcal{T}(C(S_n^+))$ and convergence in $\ldistance$ implies weak-$*$ convergence.
			\item $\ldistance(\varphi_1 * \psi_1, \varphi_2 * \psi_2) \leq \ldistance(\varphi_1,\varphi_2) + \ldistance(\psi_1, \psi_2)$.
		\end{enumerate}
	\end{proposition}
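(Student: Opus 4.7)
The overall strategy is to mirror the three arguments already given for $\freedistance$, replacing the lattice cost $1 - \frac{1}{n}\sum_{i,j} \tau(\alpha_1(u_{ij}) \wedge \alpha_2(u_{ij}))$ by the $L^1$ cost $\frac{1}{2n}\sum_{i,j} \norm{\alpha_1(u_{ij}) - \alpha_2(u_{ij})}_{L^1(M,\tau)}$.  All of the scaffolding---Lemma \ref{lemma:tracialcouplingequivalent} relating couplings to traces on the full free product $A * A$, the amalgamated free product construction, and the tensor coupling for convolution---transfers essentially verbatim.  The main simplification compared with the $\freedistance$ case is that $\norm{\cdot}_{L^1}$ is itself a norm, so the triangle inequality is applied directly rather than reconstructed from projection-lattice identities.

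For Part (1), nonnegativity and symmetry are immediate from the definition.  For nondegeneracy, the GNS coupling of $(C(S_n^+),\varphi_1)$ with $\alpha_1 = \alpha_2$ gives $\ldistance(\varphi_1,\varphi_1) = 0$; conversely, by Lemma \ref{lem:inf-l1distance} an optimizer exists, and faithfulness of $\tau$ forces $\alpha_1(u_{ij}) = \alpha_2(u_{ij})$ for all $i,j$, hence $\alpha_1 = \alpha_2$ on all generators of $C(S_n^+)$ and $\varphi_1 = \varphi_2$.  The triangle inequality is proved as in Proposition \ref{prop: freedistance is a metric}: given couplings $(M_1,\tau_1,\alpha_1,\alpha_2)$ of $(\varphi_1,\varphi_2)$ and $(M_2,\tau_2,\beta_1,\beta_2)$ of $(\varphi_2,\varphi_3)$, form the amalgamated free product $(M,\tau)$ over $\alpha_2(C(S_n^+)) \cong \beta_1(C(S_n^+))$ using Lemma \ref{lem: law isomorphism} and obtain embeddings $\gamma_1, \gamma_2, \gamma_3$; then the $L^1(M,\tau)$-norm triangle inequality
\[
\norm{\gamma_1(u_{ij}) - \gamma_3(u_{ij})}_{L^1(M,\tau)} \leq \norm{\gamma_1(u_{ij}) - \gamma_2(u_{ij})}_{L^1(M,\tau)} + \norm{\gamma_2(u_{ij}) - \gamma_3(u_{ij})}_{L^1(M,\tau)}
\]
is summed over $i,j$, divided by $2n$, and minimized over the two input couplings.

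For Part (2), the argument copies Lemma \ref{lem: weak* semicontinuity 1} using the element $C_1 \in A * A$ introduced in the proof of Lemma \ref{lem:inf-l1distance}: the identity $\ldistance(\varphi_1,\varphi_2) = \inf_{\varphi \in \mathcal{T}_{\varphi_1,\varphi_2}(A * A)} \varphi(C_1)$, together with weak-$*$ continuity of evaluation at $C_1$ and weak-$*$ compactness of $\mathcal{T}(A * A)$, gives lower semi-continuity along sequences $\varphi_{1,m} \to \varphi_1$ and $\varphi_{2,m} \to \varphi_2$ by passing to a weak-$*$ cluster point of the optimizers $\psi_m \in \mathcal{T}_{\varphi_{1,m},\varphi_{2,m}}(A * A)$.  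The implication from $\ldistance$-convergence to weak-$*$ convergence then follows by the same subsequence argument as before, using nondegeneracy and lower semi-continuity.

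Part (3) uses the tensor coupling $(M \overline{\otimes} N, \tau \otimes \sigma, \gamma_1, \gamma_2)$ with $\gamma_j = (\alpha_j \otimes \beta_j) \circ \Delta$, exactly as in Proposition \ref{prop: convolution}.  The key computation is the telescoping identity
\[
\gamma_1(u_{ij}) - \gamma_2(u_{ij}) = \sum_{k=1}^n \bigl[(\alpha_1(u_{ik}) - \alpha_2(u_{ik})) \otimes \beta_1(u_{kj}) + \alpha_2(u_{ik}) \otimes (\beta_1(u_{kj}) - \beta_2(u_{kj}))\bigr].
\]
Applying the $L^1$ triangle inequality together with the factorization $\norm{x \otimes y}_{L^1(\tau \otimes \sigma)} = \norm{x}_{L^1(\tau)} \norm{y}_{L^1(\sigma)}$ (which holds because $|x \otimes y| = |x| \otimes |y|$), summing over $i$ and $j$, and using $\sum_j \sigma(\beta_1(u_{kj})) = 1 = \sum_i \tau(\alpha_2(u_{ik}))$ to collapse the cross marginal sums, yields the claim.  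The only delicate point is bookkeeping which marginal sums cancel; no conceptual obstacle arises because global subadditivity of $\norm{\cdot}_{L^1}$ obviates the projection-lattice facts (\ref{fact: trace of max/min of projections}, \ref{fact: min versus sum}, \ref{fact: tensor versus min}) needed in the $\freedistance$ case.
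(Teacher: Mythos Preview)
Your proposal is correct and follows essentially the same approach as the paper: the same GNS/optimizer/amalgamated-free-product argument for the metric properties, the same compactness-plus-$C_1$ argument for lower semi-continuity, and the identical telescoping-plus-tensor-factorization computation for the convolution inequality. Your explicit remark that the $L^1$ norm obviates the projection-lattice facts is a helpful clarification, but otherwise the structure and details match the paper's proof.
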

	
	\begin{proof}
		(1) $\ldistance(\varphi_1, \varphi_2) \geq 0$ is straightforward. For $\varphi_1 = \varphi_2$, choose the tracial coupling $(M, \tau, \alpha_1, \alpha_2)$ such that $\tau \circ \alpha_1 = \varphi_1 = \varphi_2 = \tau \circ \alpha_2$. So, for each $i,j$, $\tau(\alpha_1(u_{ij}) - \alpha_2(u_ij)) = \tau(\alpha_1 - \alpha_2 (u_{ij}^* u_{ij})) = \tau (\alpha_1(u_{ij}) - \alpha_2(u_{ij}))^* (\alpha_1(u_{ij}) - \alpha_2(u_{ij})) $. Since $\tau$ is faithful, we get $\alpha_1(u_{ij}) = \alpha_2(u_{ij})$ for all $i,j$, so $\ldistance (\varphi_1, \varphi_2) = 0$. By Lemma \ref{lem:inf-l1distance}, the infimum is achieved $\ldistance(\varphi_1, \varphi_2) = 0 \implies $ there is a coupling $(M, \tau, \alpha_1, \alpha_2)$ such that $\| \alpha_1(u_ij)-\alpha_2(u_ij)\| = 0 \implies \alpha_1(u_{ij}) = \alpha_2(u_{ij})$ for all $i,j$ and therefore $\alpha_1 = \alpha_2$. Then $\varphi_1=\tau \circ \alpha_1 = \tau \circ \alpha_2 = \varphi_2$. Symmetry is similar to \cref{prop: freedistance is a metric}. The triangle inequality follows from the triangle inequality for the norm of $L^1(M,\tau)$ and constructing the amalgamated free product as in the case of $\freedistance$.
		
		(2) The proof is completely analogous to Lemma \ref{lem: weak* semicontinuity 1}, so we leave the details to the reader.
		
		(3) As in Proposition \ref{prop: convolution}, consider tracial couplings $(M,\tau,\alpha_1,\alpha_2)$ of $\varphi_1$ and $\varphi_2$ and $(N,\sigma,\beta_1,\beta_2)$ of $\psi_1$ and $\psi_2$.  Let $\gamma_j = (\alpha_j \otimes \beta_j) \circ \Delta: C(S_n^+) \to M \overline{\otimes} N$, so that $(M \overline{\otimes} N, \tau \otimes \sigma, \gamma_1,\gamma_2)$ is a tracial coupling of $\varphi_1 * \psi_1$ and $\varphi_2 * \psi_2$.  Note that $\norm{x \otimes y}_{L^1(M \overline{\otimes} N, \tau \otimes \sigma)} = \norm{x}_{L^1(M,\tau)} \norm{y}_{L^1(N,\sigma)}$.  Thus,
		\begin{align*}
			\lVert \gamma_1(u_{ij}) & - \gamma_2(u_{ij}) \rVert_{L^1(M \overline{\otimes} N,\tau \otimes \sigma)} \\
			&=  \norm{(\alpha_1 \otimes \beta_1)\Delta(u_{ij}) - (\alpha_2 \otimes \beta_2)\Delta(u_{ij})}_{L^1(M \overline{\otimes} N,\tau \otimes \sigma)} \\
			& = \norm*{ \sum_{k=1}^n \alpha_1(u_{ik}) \otimes \beta_1(u_{kj}) - \alpha_2(u_{ik}) \otimes \beta_2(u_{kj}) }_{L^1(M \overline{\otimes} N,\tau \otimes \sigma)}
			\\
			& = \norm*{ \sum_{k=1}^n (\alpha_1(u_{ik}) - \alpha_2(u_{ik})) \otimes \beta_1(u_{kj}) + \alpha_2(u_{ik}) \otimes (\beta_1(u_{kj}) - \beta_2(u_{kj})) }_{L^1(M \overline{\otimes} N,\tau \otimes \sigma)}
			\\
			&\leq \sum_{k=1}^n \norm{ \alpha_1(u_{ik}) - \alpha_2(u_{ik})}_{L^1(M,\tau)} \norm{\beta_1(u_{kj})}_{L^1(N,\sigma)} + \norm{\alpha_2(u_{ik})}_{L^1(M,\tau)} \norm{ \beta_1(u_{kj}) - \beta_2(u_{kj})}_{L^1(N,\sigma)} \\
			&= \sum_{k=1}^n \norm{ \alpha_1(u_{ik}) - \alpha_2(u_{ik})}_{L^1(M,\tau)} \sigma(\beta_1(u_{kj})) + \tau(\alpha_2(u_{ik})) \norm{ \beta_1(u_{kj}) - \beta_2(u_{kj})}_{L^1(N,\sigma)}.
		\end{align*}
		Therefore,
		\begin{align*}
			& \quad \frac{1}{2n} \sum_{i,j=1}^n \lVert \gamma_1(u_{ij}) - \gamma_2(u_{ij}) \rVert_{L^1(M \overline{\otimes} N,\tau \otimes \sigma)} \\
			&\leq \frac{1}{2n} \sum_{i,j,k=1}^n \norm{ \alpha_1(u_{ik}) - \alpha_2(u_{ik})}_{L^1(M,\tau)} \sigma(\beta_1(u_{kj})) + \frac{1}{2n} \sum_{i,j,k=1}^n \tau(\alpha_2(u_{ik})) \norm{ \beta_1(u_{kj}) - \beta_2(u_{kj})}_{L^1(N,\sigma)} \\
			&= \frac{1}{2n} \sum_{i,k=1}^n \norm{ \alpha_1(u_{ik}) - \alpha_2(u_{ik})}_{L^1(M,\tau)} \left[ \sum_{j=1}^n \sigma(\beta_1(u_{kj})) \right] + \frac{1}{2n} \sum_{j,k=1}^n \left[ \sum_{i=1}^n \tau(\alpha_2(u_{ik})) \right] \norm{ \beta_1(u_{kj}) - \beta_2(u_{kj})}_{L^1(N,\sigma)} \\
			&= \frac{1}{2n} \sum_{i,k=1}^n \norm{ \alpha_1(u_{ik}) - \alpha_2(u_{ik})}_{L^1(M,\tau)}  + \frac{1}{2n} \sum_{j,k=1}^n \norm{ \beta_1(u_{kj}) - \beta_2(u_{kj})}_{L^1(N,\sigma)}.
		\end{align*}
		Since the couplings were arbitrary, we obtain $\ldistance(\varphi_1 * \psi_1, \varphi_2 * \psi_2) \leq \ldistance(\varphi_1,\varphi_2) + \ldistance(\psi_1,\psi_2)$ as desired.
	\end{proof}

	\section{Distance via tensor couplings} \label{sec: tensor distance}
	
	\subsection{Tracial tensor couplings}
	
	\begin{definition}
		Let $A$ be a $\mathrm{C}^*$-algebra, and let $\varphi_1$, $\varphi_2 \in \mathcal{T}(A)$.  A \emph{tracial tensor coupling} of $(\varphi_1,\varphi_2)$ is a trace $\varphi \in \mathcal{T}(A \otimes A)$ such that $\varphi(a \otimes 1) = \varphi_1(a)$ and $\varphi(1 \otimes a) = \varphi_2(a)$ for $a \in A$.  We denote the set of tracial tensor couplings by $\Pi_{\tr,\otimes}(\varphi_1,\varphi_2)$.
	\end{definition}
	
	\begin{definition}
		A \emph{cost operator} is an element $C \in (A \otimes A)_+$.  We say that $C$ is \emph{symmetric} if $C_{2,1} = C$.  We say that $C$ satisfies the \emph{triangle inequality} if $\iota_{1,3}(C) \leq \iota_{1,2}(C) + \iota_{2,3}(C)$, where we employ tensor-leg notation.
	\end{definition}
	
	\begin{definition}
		For a cost operator $C$ and $\varphi_1$, $\varphi_2 \in \mathcal{T}(A)$, define
		\[
		W_C(\varphi_1,\varphi_2) = \inf_{\varphi \in \Pi_{\tr,\otimes}(\varphi_1,\varphi_2)} \varphi(C).
		\]
	\end{definition}
	
	\begin{remark}
		The infimum is always achieved because $\Pi_{\tr,\otimes}$ is a weak-$*$ compact subset of $\mathcal{T}(A)$ and $\varphi \mapsto \varphi(C)$ is weak-$*$ continuous.
	\end{remark}
	
	\begin{proposition} \label{prop: general tensor coupling properties}
		Let $C \in (A \otimes A)_+$ be a cost operator and $\varphi_1$, $\varphi_2$, $\varphi_3 \in \mathcal{T}(A)$.
		\begin{enumerate}
			\item $W_C(\varphi_1,\varphi_2) \geq 0$.
			\item If $C$ is symmetric, then $W_C(\varphi_1,\varphi_2) = W_C(\varphi_2,\varphi_1)$.
			\item If $C$ satisfies the triangle inequality, then $W_C(\varphi_1,\varphi_3) \leq W_C(\varphi_1,\varphi_2) + W_C(\varphi_2,\varphi_3)$.
		\end{enumerate}
	\end{proposition}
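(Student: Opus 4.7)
Parts (1) and (2) are short. For (1), any $\varphi \in \cT(A \otimes A)$ has $\varphi(C) \geq 0$ since $C \geq 0$, so the infimum is nonnegative. For (2), the tensor flip $\sigma: A \otimes A \to A \otimes A$, $a \otimes b \mapsto b \otimes a$, is a $*$-isomorphism sending $C$ to $C_{2,1} = C$ by symmetry. It induces a bijection $\Pi_{\tr,\otimes}(\varphi_1,\varphi_2) \to \Pi_{\tr,\otimes}(\varphi_2,\varphi_1)$ via $\varphi \mapsto \varphi \circ \sigma$, which preserves $\varphi(C)$; hence the two infimums coincide.

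For part (3), the plan is to amalgamate couplings. Given $\mu \in \Pi_{\tr,\otimes}(\varphi_1,\varphi_2)$ and $\nu \in \Pi_{\tr,\otimes}(\varphi_2,\varphi_3)$, I would construct a tracial state $\lambda \in \cT(A \otimes A \otimes A)$ with $\lambda \circ \iota_{1,2} = \mu$ and $\lambda \circ \iota_{2,3} = \nu$. From such a $\lambda$, the state $\lambda \circ \iota_{1,3}$ is a tracial tensor coupling of $(\varphi_1,\varphi_3)$ (the marginals match because $\iota_{1,3}(a \otimes 1) = \iota_{1,2}(a \otimes 1)$ and $\iota_{1,3}(1 \otimes b) = \iota_{2,3}(1 \otimes b)$), and the triangle inequality hypothesis together with positivity of $\lambda$ gives
\[
(\lambda \circ \iota_{1,3})(C) = \lambda(\iota_{1,3}(C)) \leq \lambda(\iota_{1,2}(C)) + \lambda(\iota_{2,3}(C)) = \mu(C) + \nu(C).
\]
Taking infimums over $\mu$ and $\nu$ then yields $W_C(\varphi_1,\varphi_3) \leq W_C(\varphi_1,\varphi_2) + W_C(\varphi_2,\varphi_3)$.

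The main obstacle is constructing $\lambda$. As flagged in the introduction, this amalgamation hinges on traciality via the Choquet simplex structure of the trace spaces, and the analogous construction for general states fails. My plan is to represent $\mu$ through its Choquet decomposition as an integral over extreme points of $\cT(A \otimes A)$, using that tracial extreme points are expected to factor as products $\phi_1 \otimes \phi_2$ of extreme traces of $A$, so that $\mu$ corresponds to a probability measure $m_\mu$ on $\partial_e\cT(A) \times \partial_e\cT(A)$, and similarly $m_\nu$ for $\nu$. The shared marginal $\varphi_2$ forces $m_\mu$ and $m_\nu$ to have a common marginal on $\partial_e\cT(A)$, namely the Choquet decomposition of $\varphi_2$, so classical measure-theoretic gluing along this common marginal produces a probability measure $m_\lambda$ on $\partial_e\cT(A)^3$ with the correct projections, and integrating the pointwise tensor $(\phi_1,\phi_2,\phi_3) \mapsto \phi_1 \otimes \phi_2 \otimes \phi_3$ against $m_\lambda$ defines $\lambda$. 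The most delicate step is justifying the factorization of extreme tracial states on the tensor product and ensuring the resulting integral yields a bona fide tracial state on the minimal tensor product convention used in the paper.
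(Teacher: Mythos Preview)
Your proposal is correct and follows essentially the same route as the paper: parts (1) and (2) are handled identically, and for (3) the paper likewise amalgamates couplings by passing to Choquet boundary measures, invoking the factorization $\partial_e\mathcal{T}(A\otimes A)\cong\partial_e\mathcal{T}(A)\times\partial_e\mathcal{T}(A)$ (stated as a separate lemma), gluing classically along the common $\varphi_2$-marginal, and then integrating back to obtain $\lambda\in\mathcal{T}(A\otimes A\otimes A)$ with the required restrictions. The factorization of extreme traces on the minimal tensor product that you flag as the delicate point is exactly the lemma the paper isolates and relies on.
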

	
	The main challenge in Proposition \ref{prop: general tensor coupling properties} is to prove the triangle inequality.  In order to do this, we need to consider $\tau_1 \in \Pi_{\tr,\otimes}(\varphi_1,\varphi_2)$ and $\tau_2 \in \Pi_{\tr,\otimes}(\varphi_2,\varphi_3)$ and amalgamate them to obtain an element of $\Pi_{\tr,\otimes}(\varphi_1,\varphi_3)$.
	
	The key property that makes this proposition work is that the trace space is a \emph{Choquet simplex}.  Recall that for a weak-$*$ closed convex subset $X$ in the dual of some Banach space, we denote by $\partial_e X$ the set of extreme points.  Choquet's theorem (see e.g.\ \cite[\S I.4]{Alfsen1971}) says for each $x \in X$, there exists a probability measure $\mu$ on $\partial_e X$ such that $x = \int_{\partial_e X} y\,d\mu(y)$; in this case, we say that $x$ is the \emph{barycenter} of $\mu$.  If the measure $\mu$ is always \emph{unique}, then $X$ is called a Choquet simplex (see e.g.\ \cite[\S II.3]{Alfsen1971}).  For a $\mathrm{C}^*$-algebra $A$, both $\mathcal{S}(A)$ and $\mathcal{T}(A)$ are weak-$*$ closed convex subsets of the dual of $A$.  However, $\mathcal{S}(A)$ is never a Choquet simplex unless $A$ is commutative.  On the other hand, $\mathcal{T}(A)$ is always a Choquet simplex.  A simple proof of this fact is given in \cite{BlackadarRordam2024}.
	
	\begin{lemma} \label{lem: trace decomposition}
		Let $A$ be a unital $\mathrm{C}^*$-algebra.  For every $\varphi \in \mathcal{T}(A)$, there exists a unique $\mu \in \mathcal{P}(\partial_e \mathcal{T}(A))$ such that
		\[
		\varphi(a) = \int_{\partial_e \mathcal{T}(A)} \tau(a) \,d\mu(\tau).
		\]
	\end{lemma}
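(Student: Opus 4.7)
The plan is to derive this statement from the classical fact, recalled just before the lemma, that $\mathcal{T}(A)$ is a Choquet simplex, combined with Choquet's representation theorem. Existence of some representing measure $\mu$ on $\partial_e \mathcal{T}(A)$ is immediate from the Choquet--Bishop--de Leeuw theorem applied to the weak-$*$ compact convex set $\mathcal{T}(A)$ (which is metrizable when $A$ is separable, so that Choquet's original theorem suffices there). Uniqueness is then precisely the defining property of a Choquet simplex. So the only real content of the proof is to reduce to the simplex property, which is exactly the content cited from \cite{BlackadarRordam2024}.

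To establish the simplex property, I would follow the standard route through the bidual. Each $\varphi \in \mathcal{T}(A)$ extends uniquely to a normal tracial state $\widetilde{\varphi}$ on the enveloping von Neumann algebra $A^{**}$. The central support of $\widetilde{\varphi}$ lies in the finite summand $A^{**}_{\operatorname{fin}}$, which carries a canonical center-valued trace. Composition with this center-valued trace sets up an affine weak-$*$ homeomorphism between $\mathcal{T}(A)$ and the normal state space of the abelian von Neumann algebra $Z(A^{**}_{\operatorname{fin}})$. The latter is the state space of a commutative $\Cst$-algebra and hence, by Gelfand duality, is affinely homeomorphic to a space of probability measures on a compact Hausdorff space; this is a Bauer simplex, in particular a Choquet simplex. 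Transporting the simplex structure back along the affine homeomorphism yields that $\mathcal{T}(A)$ is itself a Choquet simplex.

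Combining the two ingredients produces the desired $\mu$, unique by the simplex property. I expect the main technical obstacle to be the correspondence between tracial states on $A$ and normal states on $Z(A^{**}_{\operatorname{fin}})$: verifying that it is well-defined, affine, and a weak-$*$ homeomorphism involves some bookkeeping with the central decomposition of $A^{**}$ into its finite and properly infinite parts and the uniqueness of trace extensions to the bidual. Since this is carried out in detail in \cite{BlackadarRordam2024}, I would simply quote that result rather than reproducing it, and restrict the written proof to the short deduction of existence and uniqueness of $\mu$ from the simplex property plus Choquet's theorem.
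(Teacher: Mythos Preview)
Your proposal is correct and matches the paper's treatment: the paper does not give a formal proof of this lemma at all, but rather states it as an immediate consequence of the preceding paragraph, which recalls Choquet's theorem for existence and the fact (cited from \cite{BlackadarRordam2024}) that $\mathcal{T}(A)$ is a Choquet simplex for uniqueness. Your sketch of the bidual/center-valued-trace argument for the simplex property goes beyond what the paper writes out, but is exactly what the cited reference does, so quoting it as you suggest is appropriate.
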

	
	In order to study traces on the tensor product, we also need to use the following fact.
	
	\begin{lemma} \label{lem: tensor product traces}
		Let $A$ and $B$ be $\mathrm{C}^*$-algebras.  Let $\tau \in \mathcal{T}(A \otimes B)$.  Then $\tau$ is extreme if and only if $\tau = \tau_1 \otimes \tau_2$ where $\tau_1 \in \partial_e \mathcal{T}(A)$ and $\tau_2 \in \partial_e \mathcal{T}(B)$.  Moreover, this relationship defines a weak-$*$ homeomorphism $\partial_e \mathcal{T}(A \otimes B) \cong \partial_e \mathcal{T}(A) \times \partial_e \mathcal{T}(B)$.
	\end{lemma}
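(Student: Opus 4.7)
The plan is to use the standard characterization that a tracial state $\tau$ on a unital $\Cst$-algebra $C$ is extreme in $\mathcal{T}(C)$ if and only if its GNS von Neumann algebra $\pi_\tau(C)''$ is a factor, together with the fact that the spatial tensor product of two factors is again a factor. I will prove the two implications separately and then upgrade the resulting bijection to a weak-$*$ homeomorphism.

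For the ``if'' direction, given $\sigma_1 \in \partial_e \mathcal{T}(A)$ and $\sigma_2 \in \partial_e \mathcal{T}(B)$, the product $\sigma_1 \otimes \sigma_2$ extends to a tracial state on the minimal tensor product $A \otimes B$, whose GNS representation I would identify as $H_{\sigma_1} \otimes H_{\sigma_2}$ with $\pi_{\sigma_1 \otimes \sigma_2}(a \otimes b) = \pi_{\sigma_1}(a) \otimes \pi_{\sigma_2}(b)$. The corresponding von Neumann algebra is then $\pi_{\sigma_1}(A)'' \overline{\otimes} \pi_{\sigma_2}(B)''$; since each of $\pi_{\sigma_i}(A)''$ is a factor by extremality of $\sigma_i$, so is their tensor product, and hence $\sigma_1 \otimes \sigma_2$ is extreme in $\mathcal{T}(A \otimes B)$.

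For the ``only if'' direction, let $\tau \in \partial_e \mathcal{T}(A \otimes B)$ and set $M = \pi_\tau(A \otimes B)''$, $M_A = \pi_\tau(A \otimes 1)''$, and $M_B = \pi_\tau(1 \otimes B)''$. These commute and generate $M$. The first step is to show $M_A$ is a factor: any $z \in Z(M_A)$ commutes with $M_A$ and with $M_B$, hence with all of $M$, so $z \in Z(M) = \mathbb{C}$; symmetrically $M_B$ is a factor. The second and critical step is to prove $\tau(xy) = \tau(x)\tau(y)$ for $x \in M_A$, $y \in M_B$. For this I would apply the trace-preserving conditional expectation $E_{M_A} : M \to M_A$ and exploit that, for $y \in M_B$ and $a \in M_A$, the commutation $ay = ya$ together with the $M_A$-bimodule property of $E_{M_A}$ yields $a E_{M_A}(y) = E_{M_A}(ay) = E_{M_A}(ya) = E_{M_A}(y) a$; therefore $E_{M_A}(y) \in Z(M_A) = \mathbb{C}$ and must equal $\tau(y) \cdot 1$, whence $\tau(xy) = \tau(x E_{M_A}(y)) = \tau(x)\tau(y)$. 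Specializing to $x = \pi_\tau(a \otimes 1)$, $y = \pi_\tau(1 \otimes b)$ and extending by linearity and density gives $\tau = \tau_1 \otimes \tau_2$ for the marginals $\tau_1 = \tau(\,\cdot\, \otimes 1)$ and $\tau_2 = \tau(1 \otimes \,\cdot\,)$.

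To finish, I would verify extremality of the marginals and the homeomorphism claim. For $\tau_1$, the GNS of $\tau_1$ is realized on the closure $H_A$ of $M_A \widehat{1}$ in $H_\tau$, so $\pi_{\tau_1}(A)'' \cong M_A p_A$ where $p_A \in M_A'$ is the projection onto $H_A$; the reduction of a factor by a nonzero projection in its commutant is again a factor (since in a factor every nonzero projection in the commutant has central support $1$), so $\tau_1$ is extreme, and symmetrically for $\tau_2$. The homeomorphism statement then follows because $(\sigma_1,\sigma_2) \mapsto \sigma_1 \otimes \sigma_2$ is weak-$*$ continuous, with weak-$*$ continuous inverse given by taking marginals. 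I expect the main obstacle to be the multiplicativity step in the third paragraph: it uses both the traciality of $\tau$ and the factoriality of $M_A$ in an essential way, and I do not see a softer Choquet-theoretic shortcut.
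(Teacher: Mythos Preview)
The paper states this lemma without proof, treating it as a known fact to be used in the subsequent argument (Lemma~\ref{lem: tracial coupling glued}). Your proof is correct and follows the standard route: the characterization of extreme tracial states by factoriality of the GNS von Neumann algebra, together with the commutant formula for tensor products, handles the ``if'' direction; and for the ``only if'' direction your key observation that $E_{M_A}(M_B) \subseteq Z(M_A) = \mathbb{C}$, forced by commutation and factoriality, cleanly gives the multiplicativity $\tau = \tau_1 \otimes \tau_2$.

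One minor simplification: once you have established $\tau = \tau_1 \otimes \tau_2$, the GNS space decomposes as $H_\tau \cong H_{\tau_1} \otimes H_{\tau_2}$ with $\pi_\tau(a \otimes 1) = \pi_{\tau_1}(a) \otimes 1$, so $M_A \cong \pi_{\tau_1}(A)'' \otimes 1$ directly, and extremality of $\tau_1$ follows immediately from the factoriality of $M_A$ you already proved. This avoids the corner/reduction argument, though your version is also correct.
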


	\begin{lemma} \label{lem: tracial coupling glued}
		Let $A$ be a unital $\mathrm{C}^*$-algebra.  Let $\varphi_1$, $\varphi_2$, $\varphi_3 \in \mathcal{T}(A)$.  Fix couplings $\rho_1 \in \Pi_{\tr}(\varphi_1,\varphi_2)$ and $\rho_2 \in \Pi_{\tr}(\varphi_2,\varphi_3)$.  Then there exists a unique $\sigma \in \mathcal{T}(A \otimes A \otimes A)$ such that
		\[
		\rho \circ \iota_{1,2} = \rho_1, \qquad \rho \circ \iota_{2,3} = \rho_2,
		\]
		where $\iota_{i,j}$ is the natural tensor leg inclusion.
	\end{lemma}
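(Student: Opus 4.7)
The plan is to leverage the Choquet simplex structure of the trace space and reduce the problem to a classical measure-theoretic gluing on the extreme boundary of $\mathcal{T}(A)$. Write $\nu_j \in \mathcal{P}(\partial_e \mathcal{T}(A))$ for the Choquet measure representing $\varphi_j$ via Lemma~\ref{lem: trace decomposition}. By Lemma~\ref{lem: tensor product traces}, the extreme boundary of $\mathcal{T}(A \otimes A)$ is canonically identified with $\partial_e \mathcal{T}(A) \times \partial_e \mathcal{T}(A)$, so Choquet's theorem produces a unique probability measure $\mu_1$ on $\partial_e \mathcal{T}(A)^2$ with
\[
\rho_1(a \otimes b) = \int \tau_1(a) \tau_2(b)\, d\mu_1(\tau_1,\tau_2).
\]
The marginal conditions $\rho_1 \circ \iota_1 = \varphi_1$ and $\rho_1 \circ \iota_2 = \varphi_2$, combined with uniqueness of the Choquet representation on $\mathcal{T}(A)$, force the two marginals of $\mu_1$ to equal $\nu_1$ and $\nu_2$. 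The same applies to $\rho_2$, giving a measure $\mu_2$ on $\partial_e \mathcal{T}(A)^2$ with marginals $\nu_2$ and $\nu_3$.

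The next step is to glue $\mu_1$ and $\mu_2$ across their shared marginal $\nu_2$ by classical disintegration. Because $A = C(S_n^+)$ is separable, $\mathcal{T}(A)$ is weak-$*$ metrizable and compact, so $\partial_e \mathcal{T}(A)$ is a standard Borel space and regular conditional probabilities exist. I obtain Borel Markov kernels $K_1$ and $K_2$ with
\[
\mu_1 = \int K_1(\tau_2, \cdot) \otimes \delta_{\tau_2}\, d\nu_2(\tau_2), \qquad \mu_2 = \int \delta_{\tau_2} \otimes K_2(\tau_2, \cdot)\, d\nu_2(\tau_2).
\]
Defining a probability measure on $\partial_e \mathcal{T}(A)^3$ by
\[
\mu(d\tau_1, d\tau_2, d\tau_3) = K_1(\tau_2, d\tau_1) \otimes K_2(\tau_2, d\tau_3)\, \nu_2(d\tau_2)
\]
and setting $\sigma(x) := \int (\tau_1 \otimes \tau_2 \otimes \tau_3)(x)\, d\mu$ for $x \in A \otimes A \otimes A$ yields a tracial state on the minimal tensor product, since each integrand $\tau_1 \otimes \tau_2 \otimes \tau_3$ is already a tracial state. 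Verifying the marginal conditions is a routine application of Fubini's theorem: for $a, b \in A$,
\[
\sigma \circ \iota_{1,2}(a \otimes b) = \int \tau_1(a)\tau_2(b)\, K_1(\tau_2, d\tau_1)\, \nu_2(d\tau_2) = \int \tau_1(a)\tau_2(b)\, d\mu_1 = \rho_1(a \otimes b),
\]
and symmetrically $\sigma \circ \iota_{2,3} = \rho_2$.

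The main obstacle, and where extra care is needed, is the \emph{uniqueness} assertion. By the uniqueness of the Choquet representation on $\mathcal{T}(A \otimes A \otimes A)$, the state $\sigma$ corresponds to a unique probability measure on $\partial_e \mathcal{T}(A)^3$; however, specifying only the $(1,2)$- and $(2,3)$-marginals of such a measure does not generally pin it down (as one sees already with two-point toy examples, where the joint law has two free parameters after imposing the pairwise marginals). I therefore anticipate that the statement should be understood either as claiming existence of a canonical $\sigma$ (namely, the conditionally-independent-over-$\tau_2$ gluing produced above), or under a stronger interpretation of what ``$\sigma$'' is required to be. For the downstream application to the triangle inequality for $\tensordistance$, existence of \emph{some} such $\sigma$ is what is actually used, so the construction above suffices.
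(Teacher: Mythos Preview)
Your argument is essentially the same as the paper's: reduce to classical gluing of probability measures on $\partial_e \mathcal{T}(A)$ via the Choquet simplex structure and Lemma~\ref{lem: tensor product traces}, glue the two couplings over their common marginal, and take the barycenter. The paper simply invokes ``standard results on conditional independence in probability theory'' where you spell out the disintegration via Markov kernels, but the content is identical.

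Your observation about uniqueness is well taken and worth emphasizing: the paper's proof, like yours, establishes only existence, and the uniqueness assertion in the lemma statement is in fact false in general (the conditionally independent gluing is one choice among many). Since the only downstream use is in the triangle inequality for $\tensordistance$ (Proposition~\ref{prop: general tensor coupling properties}(3)), where existence alone is required, this does not affect the paper's results. One small caveat: you appeal to separability of $A$ to get a standard Borel structure for disintegration, whereas the lemma is stated for arbitrary unital $A$; the paper's proof has the same gap, and for the intended application $A = C(S_n^+)$ this is of course harmless.
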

	
	\begin{proof}
		Recall that $\partial_e \mathcal{T}(A \otimes A) \cong \partial_e \mathcal{T}(A) \otimes \partial_e \mathcal{T}(A)$, and let $\pi_1$, $\pi_2: \partial_e \mathcal{T}(A \otimes A) \to \partial_e \mathcal{T}(A)$ be the two canonical coordinate projections.
		
		For $j = 1$, $2$, $3$ let $\mu_j \in P(\partial_e \mathcal{T}(A))$ be the unique measure such that $\varphi_j = \int_{\partial_e \mathcal{T}(A)} \tau \,d\mu_j(\tau)$.  Similarly, for $j = 1$, $2$, let $\nu_j \in P(\partial_e \mathcal{T}(A \otimes A))$ be the unique measure such that $\rho_j$ is the barycenter of $\nu_j$.  Note that $(\pi_2)_* \nu_1$ gives a measure on $\partial_e \mathcal{T}(A)$ whose barycenter is $\varphi_2$, and therefore since $\mathcal{T}(A)$ is a Choquet simplex, we have $(\pi_2)_* \nu_1 = \mu_2$.  Similarly, $(\pi_1)_* \nu_2 = \mu_2$.  Thus, $\nu_1$ and $\nu_2$ yield are two probability measures on $\partial_e \mathcal{T}(A) \times \partial_e \mathcal{T}(A)$ such that the second marginal of $\nu_1$ agrees with the first marginal of $\nu_2$.  By standard results on conditional independence in probability theory, there exists a probability measure $\lambda$ on $\partial_e \mathcal{T}(A) \times \partial_e \mathcal{T}(A) \times \partial_e \mathcal{T}(A) \cong \partial_e \mathcal{T}(A \otimes A \otimes A)$ whose projection on the first two coordinates is $\nu_1$ and whose projection on the second two coordinates is $\nu_2$.  Then let
		\[
		\rho = \int_{\partial_e \mathcal{T}(A)^{\times 3}} \tau_1 \otimes \tau_2 \otimes \tau_3 \,d\lambda(\tau_1,\tau_2,\tau_3).
		\]
		Then
		\[
		\rho \circ \iota_{1,2} = \int_{\partial_e \mathcal{T}(A)^{\times 3}} \tau_1 \otimes \tau_2 \,d\lambda(\tau_1,\tau_2,\tau_3) = \int_{\partial_e \mathcal{T}(A)^{\times 2}} \tau_1 \otimes \tau_2 \,d\nu_1(\tau_1,\tau_2) = \rho_1,
		\]
		and similarly $\rho \circ \iota_{2,3} = \rho_2$.
	\end{proof}
	
	\begin{proof}[Proof of Proposition \ref{prop: general tensor coupling properties}]
		(1) follows because for any $\varphi \in \Pi_{\tr,\otimes}(\varphi_1,\varphi_2)$, we have $\varphi(C) \geq 0$ by positivity of $C$.
		
		For (2), note that if $\varphi \in \Pi_{\tr,\otimes}$, then so is $\varphi$ composed with the tensor flip.
		
		It remains to prove (3) that $W_C$ satisfies the triangle inequality provided that $C_{1,3} \leq C_{1,2} + C_{2,3}$.  Let $\varphi_1$, $\varphi_2$, $\varphi_3 \in \mathcal{T}(A)$.  Fix $\psi_1 \in \Pi_{\tr,\otimes}(\varphi_1,\varphi_2)$ and $\psi_2 \in \Pi_{\tr,\otimes}(\varphi_2,\varphi_3)$.  By Lemma \ref{lem: tracial coupling glued}, there exists some $\rho \in \mathcal{T}(A \otimes A \otimes A)$ such that
		\[
		\rho \circ \iota_{1,2} = \psi_1, \qquad \rho \circ \iota_{2,3} = \psi_2.
		\]
		Note that $\psi := \rho \circ \iota_{1,3} \in \Pi_{\tr,\otimes}(\varphi_1,\varphi_3)$.  Therefore,
		\begin{align*}
			\rho(\iota_{1,3}(C)) &\leq \rho(\iota_{1,2}(C)) + \rho(\iota_{2,3}(C)) \\
			\psi(C) &\leq \psi_1(C) + \psi_2(C).
		\end{align*}
		Hence, taking the infimum over $\psi_1 \in \Pi_{\tr,\otimes}(\varphi_1,\varphi_2)$ and $\psi_2 \in \Pi_{\tr,\otimes}(\varphi_2,\varphi_3)$, we have
		\[
		W_C(\varphi_1,\varphi_3) \leq W_C(\varphi_1,\varphi_2) + W_C(\varphi_2,\varphi_3),
		\]
		as desired.
	\end{proof}
	
	\begin{remark}
		In fact, both properties (1) and (2) also work if we use general states rather than traces, and consider the set $\Pi_\otimes(\varphi_1,\varphi_2)$ of states $\varphi$ on $A \otimes A$ with $\varphi(a \otimes 1) = \varphi_1(a)$ and $\varphi(1 \otimes a) = \varphi_2(a)$.  Moreover, the triangle inequality (3) will hold whenever the middle state $\varphi_2$ admits a unique decomposition in terms of extreme points, e.g.\ if $\varphi_2$ is itself extreme.  Compare \cite[Theorem 1]{BunthPitrikTitkosVirosztek2024} where certain quantum Wasserstein distances are shown to satisfy the triangle inequality when the middle state is pure.
	\end{remark}
	
	We finally remark that by decomposing a trace in terms of extreme traces, the tracial couplings can be reduced in some sense to classical couplings.  This lemma follows from the same reasoning used in the proof of Lemma \ref{lem: tracial coupling glued}.
	
	\begin{lemma} \label{lem: classical decomposition of tensor cost}
		Let $C$ be a cost operator, and for $\tau_1$, $\tau_2 \in \partial_e \mathcal{T}(A)$, define
		\[
		c(\tau_1,\tau_2) := (\tau_1 \otimes \tau_2)(C).
		\]
		Let $\varphi_1$, $\varphi_2 \in \mathcal{T}(A)$.  Let $\mu_1$ and $\mu_2$ be the unique probability measures on $\partial_e \mathcal{T}(A)$ with barycenters $\varphi_1$ and $\varphi_2$.  Then any $\varphi \in \Pi_{\tr,\otimes}(\varphi_1,\varphi_2)$ is the barycenter of a unique probability measure $\mu$ on $\partial_e \mathcal{T}(A) \times \partial_e \mathcal{T}(A)$ which has marginals $\mu_1$ and $\mu_2$, and we have
		\[
		\varphi(C) = \int_{\partial_e \mathcal{T}(A) \times \partial_e \mathcal{T}(A)} c(\tau_1,\tau_2) \,d\mu(\tau_1,\tau_2).
		\]
	\end{lemma}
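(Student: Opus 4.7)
The plan is to lift $\varphi$ through the barycentric decomposition on $\mathcal{T}(A \otimes A)$ and then identify the resulting measure with one on the product $\partial_e \mathcal{T}(A) \times \partial_e \mathcal{T}(A)$, using the identification from Lemma \ref{lem: tensor product traces}. Since $\mathcal{T}(A \otimes A)$ is a Choquet simplex, Lemma \ref{lem: trace decomposition} applied to $\varphi \in \mathcal{T}(A \otimes A)$ yields a unique probability measure $\widetilde{\mu}$ on $\partial_e \mathcal{T}(A \otimes A)$ whose barycenter is $\varphi$. By Lemma \ref{lem: tensor product traces}, the map $(\tau_1, \tau_2) \mapsto \tau_1 \otimes \tau_2$ is a weak-$*$ homeomorphism $\partial_e \mathcal{T}(A) \times \partial_e \mathcal{T}(A) \to \partial_e \mathcal{T}(A \otimes A)$; let $\mu$ be the pullback of $\widetilde{\mu}$ under this homeomorphism.

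Next, I would verify the marginal conditions. Letting $\pi_1, \pi_2: \partial_e \mathcal{T}(A) \times \partial_e \mathcal{T}(A) \to \partial_e \mathcal{T}(A)$ denote the coordinate projections, for any $a \in A$ I compute
\[
\int_{\partial_e \mathcal{T}(A)} \tau(a) \, d((\pi_1)_* \mu)(\tau) = \int_{\partial_e \mathcal{T}(A)^{\times 2}} \tau_1(a) \, d\mu(\tau_1,\tau_2) = \int (\tau_1 \otimes \tau_2)(a \otimes 1) \, d\mu(\tau_1,\tau_2) = \varphi(a \otimes 1) = \varphi_1(a).
\]
Thus $(\pi_1)_* \mu$ is a probability measure on $\partial_e \mathcal{T}(A)$ with barycenter $\varphi_1$. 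By the uniqueness clause of Lemma \ref{lem: trace decomposition} applied to $\varphi_1$, we conclude $(\pi_1)_* \mu = \mu_1$, and symmetrically $(\pi_2)_* \mu = \mu_2$. Uniqueness of $\mu$ with marginals $\mu_1$, $\mu_2$ and barycenter $\varphi$ follows from the uniqueness of $\widetilde{\mu}$ together with the bijectivity of the identification in Lemma \ref{lem: tensor product traces}.

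Finally, the integral formula for $\varphi(C)$ falls out from the definition of barycenter: since $\varphi = \int \tau_1 \otimes \tau_2 \, d\mu(\tau_1,\tau_2)$ in the weak-$*$ sense, evaluating both sides at $C$ yields
\[
\varphi(C) = \int_{\partial_e \mathcal{T}(A) \times \partial_e \mathcal{T}(A)} (\tau_1 \otimes \tau_2)(C) \, d\mu(\tau_1,\tau_2) = \int c(\tau_1,\tau_2) \, d\mu(\tau_1,\tau_2).
\]

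The only nontrivial ingredients are the Choquet-simplex structure of $\mathcal{T}(A \otimes A)$ (so that $\widetilde{\mu}$ exists and is unique) and the product description of its extreme boundary, both of which are already recorded in the preliminaries. I do not expect any significant obstacle; the argument is essentially a direct transport of measures through the canonical identification, with the marginal condition forced by uniqueness of the Choquet decomposition of each $\varphi_j$.
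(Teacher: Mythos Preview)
Your proof is correct and follows essentially the same approach as the paper, which simply remarks that the lemma follows from the reasoning in Lemma~\ref{lem: tracial coupling glued}: decompose $\varphi$ via the Choquet-simplex structure of $\mathcal{T}(A\otimes A)$, transport the boundary measure through the identification $\partial_e\mathcal{T}(A\otimes A)\cong\partial_e\mathcal{T}(A)\times\partial_e\mathcal{T}(A)$, and use uniqueness of the Choquet decomposition of each $\varphi_j$ to pin down the marginals. Your write-up spells this out in full and is, if anything, more explicit than what the paper records.
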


	\subsection{Tensor Hamming distance for $C(S_n^+)$} \label{subsec: tensor Hamming}
	
	\begin{definition}
		The Hamming cost operator is the element $C_H \in C(S_n^+) \otimes C(S_n^+)$ given by
		\[
		C_H = 1 - \frac{1}{n} \sum_{i,j=1}^n u_{ij} \otimes u_{ij}.
		\]
	\end{definition}
	
	\begin{remark}
		The Hamming cost for quantum permutations can be expressed in another way as follows.  Let $U$ be the canonical $n \times n$ unitary matrix in $M_n(\C) \otimes C(S_n^+)$.  Then
		\[
		\frac{1}{n} \sum_{i,j=1}^n \rho(u_{ij} \otimes u_{ij}) = [\tr_n \otimes \rho](\iota_{1,2}(U^*) \iota_{1,3}(U)),
		\]
		where the element on the right-hand side is in $M_n(\mathbb{C}) \otimes C(S_n^+) \otimes C(S_n^+)$.  Therefore,
		\[
		C_H = 1 - [\tr_n \otimes \rho](\iota_{1,2}(U^*) \iota_{1,3}(U)).
		\]
		Since $U$ is unitary, it is clear that $[\tr_n \otimes \rho](\iota_{1,2}(U^*) \iota_{1,3}(U))$ has norm at most $1$.  We also have that $C_H$ is self-adjoint since the $u_{ij}$'s are projections.  Therefore, $C_H \geq 0$, so it is a valid cost operator.
	\end{remark}
	
	\begin{lemma} \label{lem: tensor metric properties}
		The Hamming cost $C_H$ is symmetric and satisfies the triangle inequality $\iota_{1,3}(C_H) \leq \iota_{1,2}(C_H) + \iota_{2,3}(C_H)$.  Consequently, $\tensordistance$ is nonnegative and symmetric, and it satisfies the triangle inequality.
	\end{lemma}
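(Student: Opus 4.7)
The plan is to verify the two properties of $C_H$ and then invoke Proposition \ref{prop: general tensor coupling properties}.

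Symmetry is immediate: under the flip $C(S_n^+) \otimes C(S_n^+) \to C(S_n^+) \otimes C(S_n^+)$, each summand $u_{ij} \otimes u_{ij}$ is fixed, so $C_H$ is fixed.

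For the triangle inequality, I expand
\[
\iota_{1,2}(C_H) + \iota_{2,3}(C_H) - \iota_{1,3}(C_H) = 1 - \frac{1}{n} \sum_{i=1}^n X_i, \qquad X_i := \sum_{j=1}^n \bigl(u_{ij} \otimes u_{ij} \otimes 1 + 1 \otimes u_{ij} \otimes u_{ij} - u_{ij} \otimes 1 \otimes u_{ij}\bigr),
\]
and reduce to showing $\frac{1}{n} \sum_i X_i \leq 1$ in $C(S_n^+)^{\otimes 3}$. My plan is to show $X_i \leq 1$ for each fixed $i$, then sum and divide. The key observation is that for a fixed $i$, the projections $u_{ij}$ ($j=1,\dots,n$) are pairwise orthogonal and sum to $1$, so $u_{i1},\dots,u_{in}$ generate a copy of $\C^n$ inside $C(S_n^+)$. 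Thus the three families $\{u_{ij} \otimes 1 \otimes 1\}_j$, $\{1 \otimes u_{ij} \otimes 1\}_j$, $\{1 \otimes 1 \otimes u_{ij}\}_j$ mutually commute (they live in different tensor legs) and each is a partition of unity by orthogonal projections. Hence they generate a finite-dimensional commutative $\Cst$-subalgebra isomorphic to $\C^{n^3}$, with spectrum indexed by triples $(j_1,j_2,j_3) \in [n]^3$ on which the three elements evaluate to $\mathds{1}_{j=j_1}$, $\mathds{1}_{j=j_2}$, $\mathds{1}_{j=j_3}$ respectively.

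Inside this commutative subalgebra, $X_i$ evaluates at $(j_1,j_2,j_3)$ to $\mathds{1}_{j_1=j_2} + \mathds{1}_{j_2=j_3} - \mathds{1}_{j_1=j_3}$. A quick case check on whether the three indices coincide (all equal gives $1$; exactly one coincidence among $j_1=j_2$, $j_2=j_3$ gives $1$; only $j_1=j_3$ gives $-1$; all distinct gives $0$) shows that this quantity lies in $\{-1,0,1\}$ and in particular is $\leq 1$. Therefore $X_i \leq 1$ in the commutative subalgebra, hence in $C(S_n^+)^{\otimes 3}$. Averaging over $i$ yields $\frac{1}{n} \sum_i X_i \leq 1$, so $\iota_{1,3}(C_H) \leq \iota_{1,2}(C_H) + \iota_{2,3}(C_H)$.

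The main (mild) obstacle is simply justifying that the three commuting families generate a commutative algebra isomorphic to $\C^{n^3}$ rather than something smaller—this follows because the three families are in different tensor legs and each is a full partition of unity. Positivity of $C_H$ itself, which is needed to call $C_H$ a cost operator, follows by the same pointwise argument on the two-legged commutative subalgebra. Once both properties are established, Proposition \ref{prop: general tensor coupling properties} applied to $C = C_H$ immediately yields that $\tensordistance = W_{C_H}$ is nonnegative, symmetric, and satisfies the triangle inequality.
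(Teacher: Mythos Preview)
Your proof is correct and follows the same strategy as the paper: both fix $i$, observe that the three families $(u_{ij})_j$ placed in different tensor legs mutually commute and each forms a partition of unity, and then verify the inequality inside the resulting commutative subalgebra. The execution differs only in style: you pass to the Gelfand spectrum and check $\mathds{1}_{j_1=j_2} + \mathds{1}_{j_2=j_3} - \mathds{1}_{j_1=j_3} \leq 1$ pointwise, whereas the paper stays algebraic, setting $p_{1,2} = \sum_j u_{ij}\otimes u_{ij}\otimes 1$, $p_{2,3}$, $p_{1,3}$ as commuting projections and using $(1-p_{1,2})+(1-p_{2,3}) \geq 1 - p_{1,2}p_{2,3}$ together with the direct computation $p_{1,2}p_{2,3} = \sum_j u_{ij}\otimes u_{ij}\otimes u_{ij} \leq p_{1,3}$. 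Your pointwise check is arguably more transparent; the paper's version avoids naming the spectrum at all.

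One minor simplification: your worry about whether the commutative subalgebra is exactly $\C^{n^3}$ is unnecessary for the argument. Even if some of the minimal projections $u_{ij_1}\otimes u_{ij_2}\otimes u_{ij_3}$ happened to vanish, the spectrum of $X_i$ would still be contained in $\{-1,0,1\}$, so $X_i \leq 1$ holds regardless.
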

	
	\begin{proof}
		We already verified $C_H \geq 0$, and it is immediate that $C_H$ is symmetric under the tensor flip.  It remains to show that $C_H$ satisfies the triangle inequality, that is,
		\[
		1 - \frac{1}{n} \sum_{i,j=1}^n u_{ij} \otimes 1 \otimes u_{ij} \leq \left( 1 - \frac{1}{n} \sum_{i,j=1}^n u_{ij} \otimes u_{ij} \otimes 1 \right) + \left( 1 - \frac{1}{n} \sum_{i,j=1}^n 1 \otimes u_{ij} \otimes u_{ij} \right).
		\]
		It suffices to prove for each $i$ that
		\[
		1 - \sum_{j=1}^n u_{ij} \otimes 1 \otimes u_{ij} \leq \left( 1 - \sum_{j=1}^n u_{ij} \otimes u_{ij} \otimes 1 \right) + \left( 1 - \sum_{j=1}^n 1 \otimes u_{ij} \otimes u_{ij} \right),
		\]
		since of course this operator inequality can be summed over $i$ then divided by $n$.  Therefore, fix $i$, and write
		\begin{align*}
			p_{1,2} &= \sum_{j=1}^n u_{ij} \otimes u_{ij} \otimes 1 \\
			p_{2,3} &= \sum_{j=1}^n 1 \otimes u_{ij} \otimes u_{ij}  \\
			p_{1,3} &= \sum_{j=1}^n u_{ij} \otimes 1 \otimes u_{ij}.
		\end{align*}
		Then we want to prove that $1 - p_{1,3} \leq (1 - p_{1,2}) + (1 - p_{2,3})$.  Recall that $(u_{ij})_{j=1}^n$ is a family of projections that sum up to $1$, and in particular, they commute.  Similarly, $(u_{ji})_{j=1}^n$ is a family of projections that add up to $1$.  Therefore, for a fixed value of $i$, all the operators in the equations above commute with each other.  Note that $p_{1,2}$ is a projection since $u_{ij} \otimes u_{ij}$ for $j = 1$, \dots, $n$ are disjoint projections.  Similarly, the other four operators are projections.  Since we have commuting projections,
		\[
		(1 - p_{1,2}) + (1 - p_{2,3}) \geq (1 - p_{1,2}) \vee (1 - p_{2,3}) = 1 - p_{1,2} p_{2,3}.
		\]
		Using again the fact $\{u_{ij}\}_{j=1}^n$ are orthogonal
		\begin{align*}
			p_{1,2} p_{2,3} &= \left( \sum_{j=1}^n u_{ij} \otimes u_{ij} \otimes 1 \right) \left( \sum_{j'=1}^n 1 \otimes u_{ij'} \otimes u_{ij'} \right) \\
			&= \sum_{j=1}^n u_{ij} \otimes u_{ij} \otimes u_{ij} \\
			&\leq \sum_{j=1}^n u_{ij} \otimes 1 \otimes u_{ij} \\
			&= p_{1,3}.
		\end{align*}
		Thus, $1 - p_{1,3} \leq 1 - p_{1,2} p_{2,3} \leq (1 - p_{1,2}) + (1 - p_{2,3})$, as desired.
	\end{proof}
	
	\subsection{Bounds in terms of self-cost and equality cases} \label{subsec: equality cases}
	
	Lemma \ref{lem: classical decomposition of tensor cost} allows us to rephrase the tensor Hamming distance for the quantum permutation group in terms of a classical optimization problem.
	
	\begin{lemma} \label{lem: classical decomposition of tensor cost 2}
		Let $\varphi_1$, $\varphi_2 \in \mathcal{T}(C(S_n^+))$.  Let $\mu_j$ be the unique probability measure on $\partial_e \mathcal{T}(C(S_n^+))$ with barycenter $\varphi_j$.  Let
		\[
		c_H: \partial_e \mathcal{T}(C(S_n^+)) \times \partial_e \mathcal{T}(C(S_n^+)) \to [0,\infty)
		\]
		be given by
		\[
		c_H(\tau_1,\tau_2) = (\tau_1 \otimes \tau_2)(C_H) = 1 - \frac{1}{n} \sum_{i,j=1}^n \tau_1(u_{ij}) \tau_2(u_{ij}).
		\]
		Then
		\[
		\tensordistance(\varphi_1,\varphi_2) = \inf_\mu \int_{\partial_e \mathcal{T}(C(S_n^+)) \times \partial_e \mathcal{T}(C(S_n^+))} c_H(\tau_1,\tau_2)\,d\mu(\tau_1,\tau_2),
		\]
		where $\mu$ ranges over probability measures on $\partial_e \mathcal{T}(C(S_n^+)) \times \partial_e \mathcal{T}(C(S_n^+))$ with marginals $\mu_1$ and $\mu_2$ respectively.
	\end{lemma}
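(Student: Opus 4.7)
The plan is to obtain this as a direct specialization of Lemma \ref{lem: classical decomposition of tensor cost}. Since the Hamming cost operator $C_H = 1 - (1/n) \sum_{i,j=1}^n u_{ij} \otimes u_{ij}$ lies in $C(S_n^+) \otimes C(S_n^+)_+$ (as verified in \S\ref{subsec: tensor Hamming}), the earlier lemma applies with $C = C_H$. First I would evaluate
\[
c(\tau_1,\tau_2) := (\tau_1 \otimes \tau_2)(C_H) = 1 - \frac{1}{n}\sum_{i,j=1}^n \tau_1(u_{ij}) \tau_2(u_{ij}),
\]
which is exactly the function $c_H$ appearing in the statement.

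Next, I would invoke Lemma \ref{lem: classical decomposition of tensor cost}, which asserts that any $\varphi \in \Pi_{\tr,\otimes}(\varphi_1,\varphi_2)$ is the barycenter of a unique probability measure $\mu$ on $\partial_e \mathcal{T}(C(S_n^+)) \times \partial_e \mathcal{T}(C(S_n^+))$ with marginals $\mu_1$, $\mu_2$, and that $\varphi(C_H) = \int c_H(\tau_1,\tau_2)\,d\mu(\tau_1,\tau_2)$. In the reverse direction, for any probability measure $\mu$ on $\partial_e \mathcal{T}(C(S_n^+)) \times \partial_e \mathcal{T}(C(S_n^+))$ with marginals $\mu_1$, $\mu_2$, the barycenter $\varphi := \int \tau_1 \otimes \tau_2\,d\mu(\tau_1,\tau_2)$ is a tracial state (since the integrand is a tracial state and traciality is preserved under integration) and it satisfies $\varphi(a \otimes 1) = \int \tau_1(a)\,d\mu_1(\tau_1) = \varphi_1(a)$ and similarly on the other tensor leg by Lemma \ref{lem: tensor product traces}. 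Hence $\varphi \in \Pi_{\tr,\otimes}(\varphi_1,\varphi_2)$.

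Combining the two directions gives a bijection between $\Pi_{\tr,\otimes}(\varphi_1,\varphi_2)$ and the set of probability measures on $\partial_e\mathcal{T}(C(S_n^+)) \times \partial_e \mathcal{T}(C(S_n^+))$ with marginals $\mu_1$, $\mu_2$, under which the cost $\varphi(C_H)$ corresponds to $\int c_H\,d\mu$. Taking infima on both sides yields the identity stated in the lemma. The only subtle point is verifying the bijection via the uniqueness of the Choquet decomposition in the trace simplex, but this has already been incorporated into Lemma \ref{lem: classical decomposition of tensor cost}, so the argument here is essentially a matter of unpacking the definitions.
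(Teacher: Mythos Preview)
Your proposal is correct and follows essentially the same approach as the paper's proof: compute $(\tau_1\otimes\tau_2)(C_H)$ directly and then invoke Lemma~\ref{lem: classical decomposition of tensor cost} to pass between tracial tensor couplings and probability measures on the extreme boundary with the given marginals. If anything, you are slightly more careful than the paper, since you spell out the reverse direction of the bijection (that every such $\mu$ yields a $\varphi\in\Pi_{\tr,\otimes}(\varphi_1,\varphi_2)$), which the paper's proof asserts without further comment.
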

	
	\begin{proof}
		The evaluation of $(\tau_1 \otimes \tau_2)(C_H) = 1 - \frac{1}{n} \sum_{i,j=1}^n \tau_1(u_{ij}) \tau_2(u_{ij})$ follows by direct computation.  By Lemma \ref{lem: classical decomposition of tensor cost}, the tensor couplings $\varphi$ of $\varphi_1$ and $\varphi_2$ are in bijection with probability measures $\mu$ on $\partial_e \mathcal{T}(C(S_n^+)) \times \partial_e \mathcal{T}(C(S_n^+))$, and the cost of each $\mu$ is given by the expression asserted in the lemma.
	\end{proof}
	
	In particular, this allows us to evaluate the self-distance $\tensordistance(\varphi,\varphi)$ as follows.  Note the parallel with the self-distance estimate for a certain quantum Wasserstein distance \cite[Theorem 1]{DePalmaTrevisan2021}.
	
	\begin{lemma} \label{lem: self distance estimate}
		Let $\varphi_0 \in \mathcal{T}(C(S_n^+))$ and let $\mu_0$ be the corresponding probability measure on $\partial_e \mathcal{T}(C(S_n^+))$.  Then
		\begin{equation} \label{eq: self-distance evaluation}
			\tensordistance(\varphi_0,\varphi_0) = 1 - \frac{1}{n} \sum_{i,j=1}^n \int_{\partial_e \mathcal{T}(C(S_n^+))} \tau(u_{ij})^2 \,d\mu_0(\tau).
		\end{equation}
		Now let $\varphi_1$, $\varphi_2 \in \mathcal{T}(C(S_n^+))$ and let $\mu_1$ and $\mu_2$ be the corresponding probability measures on $\partial_e \mathcal{T}(C(S_n^+))$. Then
		\begin{multline} \label{eq: self-distance estimate}
			\tensordistance(\varphi_1,\varphi_2) - \frac{1}{2} \tensordistance(\varphi_1,\varphi_1) - \frac{1}{2} \tensordistance(\varphi_2,\varphi_2) \\
			= \inf_{\mu}  \int_{\partial_e \mathcal{T}(C(S_n^+)) \times \partial_e \mathcal{T}(C(S_n^+))} \frac{1}{2n} \sum_{i,j=1}^n (\tau_1(u_{ij}) - \tau_2(u_{ij}))^2\,d\mu(\tau_1,\tau_2).
		\end{multline}
	\end{lemma}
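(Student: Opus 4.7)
The plan is to use Lemma \ref{lem: classical decomposition of tensor cost 2} to reduce everything to a classical optimal transport problem on $\partial_e \mathcal{T}(C(S_n^+))$ with cost $c_H(\tau_1,\tau_2) = 1 - \frac{1}{n}\sum_{i,j}\tau_1(u_{ij})\tau_2(u_{ij})$, and then exploit the fact that when expanding $(\tau_1(u_{ij}) - \tau_2(u_{ij}))^2$, the squared-marginal terms do not depend on the coupling $\mu$.

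For equation \eqref{eq: self-distance evaluation}, the key observation is that the AM-GM inequality gives
\[
c_H(\tau_1,\tau_2) \;\geq\; 1 - \frac{1}{2n}\sum_{i,j=1}^n\bigl(\tau_1(u_{ij})^2 + \tau_2(u_{ij})^2\bigr),
\]
and integrating against any coupling $\mu$ with both marginals equal to $\mu_0$ produces a lower bound of $1 - \frac{1}{n}\sum_{i,j}\int \tau(u_{ij})^2\,d\mu_0(\tau)$, since the two integrated terms only see the marginal. The lower bound is attained by the diagonal coupling $\mu = (\mathrm{id}\times\mathrm{id})_*\mu_0$: on the diagonal the AM-GM inequality is an equality because $\tau_1(u_{ij}) = \tau_2(u_{ij})$. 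This establishes \eqref{eq: self-distance evaluation}.

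For \eqref{eq: self-distance estimate}, I would expand
\[
\frac{1}{2n}\sum_{i,j=1}^n(\tau_1(u_{ij}) - \tau_2(u_{ij}))^2 \;=\; \frac{1}{2n}\sum_{i,j=1}^n\tau_1(u_{ij})^2 - \frac{1}{n}\sum_{i,j=1}^n\tau_1(u_{ij})\tau_2(u_{ij}) + \frac{1}{2n}\sum_{i,j=1}^n\tau_2(u_{ij})^2
\]
and integrate against any coupling $\mu$ of $\mu_1, \mu_2$. The first and third terms reduce to $\frac{1}{2n}\sum_{i,j}\int \tau(u_{ij})^2\,d\mu_k(\tau)$ for $k=1,2$ respectively, independent of the choice of $\mu$. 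The middle term is $-\frac{1}{n}\sum_{i,j}\int\tau_1(u_{ij})\tau_2(u_{ij})\,d\mu$, and taking the infimum over $\mu$ corresponds to $\tensordistance(\varphi_1,\varphi_2) - 1$ via Lemma \ref{lem: classical decomposition of tensor cost 2}. Combining with part (1) applied to $\varphi_1$ and $\varphi_2$ and doing straightforward algebra produces the asserted identity.

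There is essentially no hard step here once Lemma \ref{lem: classical decomposition of tensor cost 2} is in hand; the only subtlety is the observation that marginal integrals are constants in the optimization, which converts the distance computation into a ``variance-like'' quantity. The parallel with the classical identity $\mathrm{Cov}(X,Y) = \frac{1}{2}\mathrm{Var}(X) + \frac{1}{2}\mathrm{Var}(Y) - \frac{1}{2}\mathbb{E}[(X-Y)^2]$ is a useful mnemonic for why no further work should be needed.
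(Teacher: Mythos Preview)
Your proposal is correct and follows essentially the same route as the paper: reduce via Lemma \ref{lem: classical decomposition of tensor cost 2} to the classical transport problem, use AM--GM plus the diagonal coupling for \eqref{eq: self-distance evaluation}, and for \eqref{eq: self-distance estimate} expand the square and observe that the marginal terms are constants in the optimization. The paper presents the second computation by starting from $\int c_H\,d\mu$ and subtracting the self-distances, whereas you start from the squared difference and expand, but this is purely cosmetic.
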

	
	\begin{proof}
		For ease of notation, write $\Omega = \partial_e \mathcal{T}(C(S_n^+))$.  Consider $\varphi_1$, $\varphi_2 \in \mathcal{T}(C(S_n^+))$ and corresponding measures $\mu_1$ and $\mu_2$, and let $\mu$ be a measure on $\partial_e \mathcal{T}(C(S_n^+)) \times \partial_e \mathcal{T}(C(S_n^+))$ with marginals $\mu_1$ and $\mu_2$.  Then we have by the arithmetic-geometric mean inequality,
		\begin{align*}
			\int_{\Omega \times \Omega} & \left[ 1 - \frac{1}{n} \sum_{i,j=1}^n \tau_1(u_{ij}) \tau_2(u_{ij})\right] \,d\mu(\tau_1,\tau_2) \\
			&= 1 - \frac{1}{n} \sum_{i,j=1}^n \int_{\Omega \times \Omega} \tau_1(u_{ij}) \tau_2(u_{ij})\,d\mu(\tau_1,\tau_2) \\
			&\geq 1 - \frac{1}{n} \sum_{i,j=1}^n \int_{\Omega \times \Omega} \left[ \frac{1}{2} \tau_1(u_{ij})^2 + \frac{1}{2} \tau_2(u_{ij})^2 \right] \,d\mu(\tau_1,\tau_2) \\
			&= \frac{1}{2} \int_{\Omega} \left[ 1 - \frac{1}{n} \sum_{i,j=1}^n \tau_1(u_{ij})^2 \right]\,d\mu_1(\tau_1) + \frac{1}{2} \int_{\Omega} \left[ 1 - \frac{1}{n} \sum_{i,j=1}^n \tau_2(u_{ij})^2 \right]\,d\mu_2(\tau_2).
		\end{align*}
		In particular, in the case when $\mu_1 = \mu_2 = \mu_0$, we obtain
		\[
		\int_{\Omega \times \Omega} \left[ 1 - \frac{1}{n} \sum_{i,j=1}^n \tau_1(u_{ij}) \tau_2(u_{ij})\right] \,d\mu(\tau_1,\tau_2) \geq \int_{\Omega} \left[ 1 - \frac{1}{n} \sum_{i,j=1}^n \tau(u_{ij})^2 \right]\,d\mu_0(\tau),
		\]
		and thus, since $\mu$ was arbitrary,
		\[
		\tensordistance(\varphi_0,\varphi_0) \geq \int_{\partial_e \mathcal{T}(C(S_n^+))} \left[ 1 - \frac{1}{n} \sum_{i,j=1}^n \tau(u_{ij})^2 \right]\,d\mu_0(\tau).
		\]
		On the other hand, equality is achieved when we take $\mu$ to be a copy of $\mu_0$ supported on the diagonal of $\Omega \times \Omega$.  Therefore, we have proved \eqref{eq: self-distance evaluation}.
		
		We then observe that
		\begin{align*}
			\int_{\Omega \times \Omega} & \left[ 1 - \frac{1}{n} \sum_{i,j=1}^n \tau_1(u_{ij}) \tau_2(u_{ij})\right] \,d\mu(\tau_1,\tau_2) - \frac{1}{2} \tensordistance(\varphi_1,\varphi_1) - \frac{1}{2} \tensordistance(\varphi_2,\varphi_2) \\
			&= \int_{\Omega \times \Omega} \left[ 1 - \frac{1}{n} \sum_{i,j=1}^n \tau_1(u_{ij}) \tau_2(u_{ij})\right] \,d\mu(\tau_1,\tau_2) \\
			& \quad - \frac{1}{2} \int_{\Omega} \left[ 1 - \frac{1}{2} \sum_{i,j=1}^n \tau_1(u_{ij})^2 \right] \,d\mu_1(\tau_1) - \frac{1}{2} \int_{\Omega} \left[1 - \frac{1}{n} \sum_{i,j=1}^n \tau_2(u_{ij})^2\right] \,d\mu_2(\tau_2) \\
			&= \int_{\Omega \times \Omega} \frac{1}{2n} \sum_{i,j=1}^n (\tau_1(u_{ij}) - \tau_2(u_{ij}))^2\,d\mu(\tau_1,\tau_2).
		\end{align*}
		By taking the infimum over $\mu$ with marginals $\mu_1$ and $\mu_2$ on both sides of the equation, we obtain \eqref{eq: self-distance estimate}.
	\end{proof}
	
	As a consequence of this formula, we next observe that the self-distance vanishes if and only if $\varphi$ arises from a classical probability measure on $S_n$.  Thus, ``most'' tracial states on $C(S_n^+)$ have nonzero self-distance.
	
	\begin{lemma}
		Let $\varphi$ be a tracial state on $C(S_n^+)$.  Then $\tensordistance(\varphi,\varphi) = 0$ if and only if $\varphi$ has the form $\varphi(a) = \int_{S_n} \theta(a)\,d\nu$ for some $\nu \in P(S_n)$, where $\theta: C(S_n^+) \to C(S_n)$ is the canonical quotient map.
	\end{lemma}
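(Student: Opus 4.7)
The plan is to exploit the formula
\[
\tensordistance(\varphi,\varphi) = 1 - \frac{1}{n} \sum_{i,j=1}^n \int_{\partial_e \mathcal{T}(C(S_n^+))} \tau(u_{ij})^2 \,d\mu_0(\tau)
\]
proved in the preceding lemma, and to convert the vanishing condition into a $\mu_0$-almost everywhere condition on the extreme traces. First I would observe that, because $\sum_i u_{ij} = 1$ in $C(S_n^+)$, every trace $\tau$ satisfies $\frac{1}{n}\sum_{i,j}\tau(u_{ij}) = 1$. Since each $u_{ij}$ is a projection, $0 \leq \tau(u_{ij})^2 \leq \tau(u_{ij}) \leq 1$, so the integrand is pointwise at most $1$, with equality at $\tau$ if and only if $\tau(u_{ij}) \in \{0,1\}$ for every $i,j$. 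Thus $\tensordistance(\varphi,\varphi) = 0$ iff $\mu_0$-almost every extreme trace $\tau$ takes only $\{0,1\}$-values on the generators.

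Next I would show that any such extreme trace is of the form $\delta_\sigma \circ \theta$ for some $\sigma \in S_n$. Let $(M_\tau,\tau)$ be the tracial von Neumann algebra from the GNS construction (Lemma \ref{lem: GNS tracial von Neumann algebra}). Faithfulness of $\tau$ on $M_\tau$ together with $\tau(u_{ij}) \in \{0,1\}$ forces each projection $\pi_\tau(u_{ij})$ to be either $0$ or $1$. The row/column relations $\sum_i u_{ij} = \sum_j u_{ij} = 1$ then guarantee that for each $j$ there is exactly one index $i$ with $\pi_\tau(u_{ij}) = 1$, and the map $j \mapsto i$ so determined is a permutation $\sigma^{-1}$. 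Equivalently, $\tau$ factors through $\theta$ as evaluation at $\sigma$, i.e.\ $\tau = \delta_\sigma \circ \theta$.

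For the final identification, I would note that each $\delta_\sigma \circ \theta$ is a character on $C(S_n^+)$, hence pure in $\mathcal{S}(C(S_n^+))$, and therefore extreme within $\mathcal{T}(C(S_n^+)) \subseteq \mathcal{S}(C(S_n^+))$. Hence $\mu_0$ is concentrated on the finite set $\{\delta_\sigma \circ \theta : \sigma \in S_n\} \subseteq \partial_e \mathcal{T}(C(S_n^+))$, and pulling back along $\sigma \mapsto \delta_\sigma \circ \theta$ produces a probability measure $\nu \in P(S_n)$ satisfying $\varphi(a) = \int_{S_n} \theta(a)\,d\nu$. Conversely, if $\varphi(a) = \int_{S_n}\theta(a)\,d\nu$, then $\varphi$ is the barycenter of the pushforward measure on the set of characters $\{\delta_\sigma \circ \theta\}$, and by uniqueness of barycentric decomposition in the Choquet simplex $\mathcal{T}(C(S_n^+))$ (Lemma \ref{lem: trace decomposition}), this pushforward is $\mu_0$; since the integrand in the self-distance formula is identically $1$ on this support, $\tensordistance(\varphi,\varphi) = 0$.

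The main subtlety is the step extracting a classical permutation from an extreme trace with $\{0,1\}$-valued generators: one must be careful to pass from $\tau(u_{ij}) \in \{0,1\}$ to $\pi_\tau(u_{ij}) \in \{0,1\}$ using faithfulness on the GNS von Neumann algebra, and then to verify that the remaining relations in $C(S_n^+)$ (which quotient it to $C(S_n)$) are automatically satisfied once the $\pi_\tau(u_{ij})$ lie in $\{0,1\}$, so that $\tau$ genuinely factors through $\theta$.
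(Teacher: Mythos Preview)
Your proposal is correct and follows essentially the same approach as the paper. The forward direction is identical in spirit: both use the self-distance formula to force $\tau(u_{ij})\in\{0,1\}$ for $\mu_0$-almost every extreme $\tau$, then invoke faithfulness of the GNS trace to upgrade this to $\pi_\tau(u_{ij})\in\{0,1\}$, and conclude that $\tau$ factors through $\theta$. The paper phrases the last step slightly more abstractly (the GNS image is one-dimensional, so $\tau$ is a character and vanishes on $\ker\theta$, hence so does $\varphi$), whereas you identify the specific permutation $\sigma$ and push $\mu_0$ forward to $S_n$; these are equivalent. For the converse, the paper offers two routes: deferring to the later result that $\tensordistance$ restricts to the classical $W_1$ on measures coming from $S_n$, or observing that the Dirac masses $\delta_\sigma\circ\theta$ are extreme and plugging into the self-distance formula directly---your argument is exactly this second alternative.
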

	
	\begin{proof}
		Suppose that $\tensordistance(\varphi,\varphi) = 0$.  Let $\mu$ be the measure on $\partial_e \mathcal{T}(C(S_n^+))$ corresponding to $\varphi$.  Then
		\begin{align*}
			\int_{\partial_e \mathcal{T}(C(S_n^+))} \frac{1}{n} \sum_{i,j=1}^n [\tau(u_{ij}) - \tau(u_{ij})^2]\,d\mu(\tau) &= \int_{\partial_e \mathcal{T}(C(S_n^+))} \left[ \frac{1}{n} \sum_{i=1}^n \tau(1) - \frac{1}{n} \sum_{i,j=1}^n \tau(u_{i,j})^2] \right] \,d\mu(\tau) \\
			&= 1 - \int_{\partial_e \mathcal{T}(C(S_n^+))} \left[ \frac{1}{n} \sum_{i,j=1}^n \tau(u_{ij})^2] \right] \,d\mu(\tau) \\
			&= \tensordistance(\varphi,\varphi).
		\end{align*}
		Since $0 \leq \tau(u_{ij}) - \tau(u_{ij})^2 \leq 1$, we conclude that for $\mu$-almost every $\tau$, for all $i,j$, we have $\tau(u_{ij}) = \tau(u_{ij})^2$, and so $\tau(u_{ij}) \in \{0,1\}$.  For any $\tau$ that satisfies this condition, the projection $\pi_{\tau}(u_{ij})$ is either $0$ or $1$ by faithfulness of the trace on $\pi_{\tau}(A)''$.  Since $\pi_{\tau}(A)''$ is generated by $\{\pi_{\tau}(u_{ij})\}_{i,j=1}^n$, this forces\ $\pi_{\tau}(A)''$ to be one-dimensional, and so $\pi_{\tau}(a) = \tau(a) 1$ for all $a$.  Hence, $\tau$ is $*$-homomorphism $C(S_n^+) \to \C$ and so it vanishes on $\ker(\theta)$.  Since this holds for $\mu$-almost every $\tau$, we see that $\varphi$ vanishes on $\ker(\theta)$.  Hence, there exists a tracial state $\overline{\varphi}$ on $C(S_n)$ such that $\varphi = \overline{\varphi} \circ \theta$; indeed, one can verify readily that $\overline{\varphi}$ is well-defined as a linear map since $\varphi$ vanishes on the kernel and then check that $\overline{\varphi}$ is unital, positive, and tracial.  But every tracial state on $C(S_n)$ is given by a probability measure on $S_n$.
		
		For the converse direction, we defer to Proposition \ref{prop: recovery of classical distance} below which shows that for tracial states that arise from probability measures on $S_n$, our $\tensordistance$ agrees with the classical $L^1$ Wasserstein distance, so in particular the self-distance is zero.  Alternatively, one can argue that the Dirac delta masses on $C(S_n)$ produce extreme states on $C(S_n^+)$, so the decomposition of $\nu$ in terms of such Dirac masses gives a direct description of boundary measure $\mu$ to use in \eqref{eq: self-distance evaluation}.
	\end{proof}

	The preceding lemma allows us to express $\tensordistance$ in terms of measures on the Birkhoff polytope
	\[
	B_n = \{A \in M_n(\mathbb{R}): A_{ij} \in [0,1], \sum_i A_{ij} = 1, \sum_j A_{ij} = 1 \}
	\]
	of bistochastic matrices. We remark that if $\varphi$ is a state on $C(S_n^+)$, then the matrix
	\[
	\omega(\varphi) := [\varphi(u_{ij})]_{i,j=1}^n \text{ is in } B_n
	\]
	because $\sum_i u_{ij} = 1$ and $\sum_j u_{ij} = 1$.
	
	\begin{lemma}
		Let $\varphi_1$, $\varphi_2 \in \mathcal{T}(C(S_n^+))$, and let $\mu_j$ be the probability measure on $\partial_e \mathcal{T}(C(S_n^+))$ with barycenter $\varphi_j$.  Let $\omega_* \mu_j \in \mathcal{P}(B_n)$ be the pushforward and let $W_2$ be the Wasserstein distance on $\mathcal{P}(B_n)$ with respect to the normalized Hilbert-Schmidt metric $\norm{A - B}_2 = \tr_n[(A - B)^2]^{1/2}$.  Then we have
		\[
		\tensordistance(\varphi_1,\varphi_2) - \frac{1}{2} \tensordistance(\varphi_1,\varphi_1) - \frac{1}{2} \tensordistance(\varphi_2,\varphi_2) = \frac{1}{2} W_2(\omega_* \mu_1,\omega_* \mu_2)^2.
		\]
		In particular, the left-hand side vanishes if and only if $\omega_* \mu_1 = \omega_* \mu_2$.
	\end{lemma}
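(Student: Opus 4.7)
The plan is to start from the identity established in Lemma \ref{lem: self distance estimate} and recognize the integrand there as half the squared normalized Hilbert--Schmidt distance between the bistochastic matrices $\omega(\tau_1)$ and $\omega(\tau_2)$. Since $\norm{A}_2^2 = \tr_n(A^2) = \frac{1}{n}\sum_{i,j=1}^n A_{ij}^2$, one immediately has
\[
\frac{1}{2n}\sum_{i,j=1}^n (\tau_1(u_{ij}) - \tau_2(u_{ij}))^2 = \frac{1}{2} \norm{\omega(\tau_1) - \omega(\tau_2)}_2^2,
\]
so that by Lemma \ref{lem: self distance estimate} the left-hand side of the lemma equals
\[
\frac{1}{2}\,\inf_{\mu} \int \norm{\omega(\tau_1) - \omega(\tau_2)}_2^2 \, d\mu(\tau_1,\tau_2),
\]
where the infimum runs over probability measures $\mu$ on $\partial_e \cT(C(S_n^+)) \times \partial_e \cT(C(S_n^+))$ with marginals $\mu_1$ and $\mu_2$. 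The task thus reduces to showing that this infimum equals $W_2(\omega_* \mu_1, \omega_* \mu_2)^2$.

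The ``$\geq$'' direction is immediate: for any coupling $\mu$ of $(\mu_1,\mu_2)$, the pushforward $(\omega \times \omega)_*\mu$ is a coupling of $(\omega_*\mu_1, \omega_*\mu_2)$ whose cost with respect to $\norm{\cdot}_2^2$ agrees with that of $\mu$, since the integrand factors through $\omega \times \omega$. Taking the infimum yields one inequality. For the reverse inequality I will use a disintegration/gluing argument. Given a coupling $\pi$ of $(\omega_*\mu_1,\omega_*\mu_2)$ on $B_n \times B_n$, I disintegrate each $\mu_j$ along the continuous map $\omega$ as $\mu_j = \int_{B_n} \mu_j^A \, d(\omega_*\mu_j)(A)$, where $\mu_j^A$ is supported on the fibre $\omega^{-1}(A) \cap \partial_e \cT(C(S_n^+))$ for $(\omega_*\mu_j)$-a.e.\ $A$. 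Then I set
\[
\mu = \int_{B_n \times B_n} \mu_1^A \otimes \mu_2^B \, d\pi(A,B).
\]
One verifies that $\mu$ has marginals $\mu_1$ and $\mu_2$ because $\pi$ has the correct marginals on $B_n \times B_n$ and the disintegration reconstructs each $\mu_j$. Since $\omega(\tau_j) = A_j$ holds $\mu_j^{A_j}$-a.e., the cost of $\mu$ collapses to $\int_{B_n \times B_n} \norm{A - B}_2^2 \, d\pi(A,B)$, and taking the infimum over $\pi$ produces the matching upper bound.

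The main obstacle is the measure-theoretic bookkeeping in this lifting step: one has to justify the disintegration, for which it suffices that $\cT(C(S_n^+))$ is compact metrizable (hence standard Borel), that $\partial_e \cT(C(S_n^+))$ is a $G_\delta$ subset thereof, and that $B_n$ is standard Borel with $\omega$ continuous; and one has to confirm that $\mu$ is supported on the product of extreme boundaries, which follows because each $\mu_j^A$ inherits the support property from $\mu_j$. Once the main identity is in place, the ``in particular'' clause is immediate from the fact that $W_2$ is a genuine metric on $\mathcal{P}(B_n)$, so the self-distance correction vanishes if and only if $\omega_*\mu_1 = \omega_*\mu_2$.
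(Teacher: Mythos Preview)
Your proposal is correct and follows essentially the same route as the paper. Both arguments reduce to Lemma~\ref{lem: self distance estimate}, obtain the $\geq$ direction by pushing forward an optimal coupling via $\omega\times\omega$, and handle the $\leq$ direction by lifting an optimal $W_2$-coupling on $B_n\times B_n$ back to the extreme boundary; you phrase this lift via explicit disintegration of each $\mu_j$ along $\omega$, while the paper packages the same construction as a conditionally independent join on the four-fold product $\partial_e\mathcal{T}(C(S_n^+))\times B_n\times\partial_e\mathcal{T}(C(S_n^+))\times B_n$ and then projects, but the content is identical.
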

	
	\begin{proof}
		First, let $\varphi$ be a trace that achieves the optimum in the definition of $\tensordistance(\varphi_1,\varphi_2)$, and let $\mu$ be the corresponding probability measure on $\partial_e \mathcal{T}(C(S_n^+)) \times \partial_e \mathcal{T}(C(S_n^+))$.  Then $\nu = (\omega \times \omega)_* \mu$ defines a classical coupling of $\nu_1 = \omega_* \mu_1$ and $\nu_2 = \omega_* \nu_2$.  It follows from \eqref{eq: self-distance estimate} and the definition of $\nu_1$, $\nu_2$, that
		\begin{align*}
			\tensordistance(\varphi_1,\varphi_2) & - \frac{1}{2} \tensordistance(\varphi_1,\varphi_1) - \frac{1}{2} \tensordistance(\varphi_2,\varphi_2) \\
			&= \int_{\partial_e \mathcal{T}(C(S_n^+)) \times \partial_e \mathcal{T}(C(S_n^+))} \frac{1}{2n} \sum_{i,j=1}^n (\tau_1(u_{ij}) - \tau_2(u_{ij}))^2\,d\mu(\tau_1,\tau_2) \\
			&= \int_{B_n \times B_n} \frac{1}{2n} \sum_{i,j=1}^n (A_{ij} - A_{ij}')^2 \,d\nu(A,A') \\
			&= \int_{B_n \times B_n} \frac{1}{2} \norm{A - A'}_2^2 \,d\nu(A,A') \\
			&\geq \frac{1}{2} W_2(\nu_1,\nu_2)^2.
		\end{align*}
		
		To prove the opposite inequality, let $\nu \in \mathcal{P}(B_n \times B_n)$ be an optimal coupling of $\nu_1$ and $\nu_2$ for $W_2$.  To prove the lemma, it suffices to show that $\nu = (\omega \times \omega)_* \mu$ for some coupling $\mu$ of $(\mu_1,\mu_2)$ on $\partial_e \mathcal{T}(C(S_n^+)) \times \partial_e \mathcal{T}(C(S_n^+))$, because then the computations that we did above can be applied to this $\mu$.
		
		For $j = 1$, $2$, define a measure $\rho_j \in \mathcal{P}(\partial_e \mathcal{T}(C(S_n^+)) \times B_n)$ by $\rho_j = (\id, \omega)_* \mu_j$.  By construction the marginals of $\rho_j$ are $\mu_j$ and $\nu_j$.  We also have a measure $\nu$ on $B_n \times B_n$ whose marginals are $\nu_1$ and $\nu_2$.  By taking conditionally independent joins or fibred products of the probability spaces, there exists a probability measure $\rho$ on $\partial_e \mathcal{T}(C(S_n^+)) \times B_n \times \partial_e \mathcal{T}(C(S_n^+)) \times B_n$ such that
		\begin{itemize}
			\item The marginal of $\rho$ on the first two coordinates is $\rho_1$.
			\item The marginal of $\rho$ on the second and fourth coordinates (the copies of $B_n$) is $\nu$.
			\item The marginal of $\rho$ on the last two coordinates is $\rho_1$.
		\end{itemize}
		Let $\mu \in \mathcal{P}(\partial_e \mathcal{T}(C(S_n^+)) \times \partial_e \mathcal{T}(C(S_n^+)))$ be the marginal of $\rho$ on the first and third coordinates.  Thus, the marginals of $\mu$ are the marginals of $\rho$ on the first and third coordinates respectively, which are $\mu_1$ and $\mu_2$.  So it remains to show that $(\omega \times \omega)_* \mu = \nu$.  For this, note that $\rho_1$ and $\rho_2$ are supported on $\{(\tau,A) \in \partial_e \mathcal{T}(C(S_n^+)) \times B_n: \omega(\tau) = A\}$.  Hence, $\rho$ is supported on
		\[
		\{ (\tau,A,\tau',A'): \omega(\tau) = A, \omega(\tau') = A' \}.
		\]
		Therefore, for each Borel set $E \subseteq B_n \times B_n$, we have
		\begin{align*}
			\nu(E) &= \rho(\{(\tau,A,\tau',A'): (A,A') \in E\}) \\
			&= \rho(\{ (\tau,A,\tau',A'): \omega(\tau) = A, \omega(\tau') = A', (A,A') \in E \}) \\
			&= \rho(\{ (\tau,A,\tau',A'): \omega(\tau) = A, \omega(\tau') = A', (\tau,\tau') \in (\omega \times \omega)^{-1}(E) \}) \\
			&= \rho(\{ (\tau,A,\tau',A'):  (\tau,\tau') \in (\omega \times \omega)^{-1}(E) \}) \\
			&= \mu((\omega \times \omega)^{-1}(E)).
		\end{align*}
		Hence, $\nu = (\omega \times \omega)_* \mu$ as desired.
	\end{proof}

	\subsection{Behavior under convolution} \label{subsec: tensor convolution}
	
	In this section, we show the subadditivity property for the distance with respect to convolution.  This is the dual version or quantum analog of the multiplication operation being $1$-Lipschitz in each argument for a metric group.
	
	We define the following $*$-homomorphism using the Sweedler notation
	\[
	\tilde{\Delta}: A \otimes A \to A \otimes A \otimes A \otimes A \text{ defined as }
	\]
	\[ 
	\tilde{\Delta}(a \otimes 1)  = \sum_{(a)} a_{(1)} \otimes 1 \otimes a_{(2)} \otimes 1 = \iota_{1,3}(\Delta(a) \otimes 1) \]
	\[
	\tilde{\Delta}(1 \otimes a) = \sum_{(a)} 1 \otimes a_{(1)} \otimes 1 \otimes a_{(2)} = \iota_{2,4}( 1\otimes \Delta(a))
	\] 
	where $\iota_{i,j} : A \otimes A \otimes A \otimes A \to A \otimes A \otimes A \otimes A$ are embeddings to the $(i,j)$-th legs.
	
	\begin{lemma}\label{comult-inequality}
		For the Hamming cost $C_H \in (A \otimes A)_{+}$, we have 
		\[
		\tilde{\Delta}(C_H) \leq \iota_{1,2}(C_H) + \iota_{3,4}(C_H)
		\]
	\end{lemma}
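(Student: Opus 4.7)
My strategy mimics the classical proof of the bi-invariant triangle inequality
\[
d_H(s_1 t_1, s_2 t_2) \leq d_H(s_1 t_1, s_2 t_1) + d_H(s_2 t_1, s_2 t_2) = d_H(s_1, s_2) + d_H(t_1, t_2),
\]
which passes through the midpoint $s_2 t_1$. Writing $P = \sum_{i, j} u_{ij} \otimes u_{ij}$ and $P_i = \sum_j u_{ij} \otimes u_{ij}$, the stated inequality is equivalent to the operator inequality $\iota_{1,2}(P) + \iota_{3,4}(P) - \tilde{\Delta}(P) \leq n \cdot 1$ in $A^{\otimes 4}$. I plan to split each of the three terms as a sum indexed by $i = 1, \ldots, n$, and then establish a local inequality for each $i$.

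Set $p_i := \iota_{1,2}(P_i)$ and $r_i := \tilde{\Delta}(P_i)$, so that $\sum_i p_i = \iota_{1,2}(P)$ and $\sum_i r_i = \tilde{\Delta}(P)$. The crucial construction is the \emph{quantum midpoint}
\[
q_i := \sum_\ell 1 \otimes u_{i\ell} \otimes P_\ell, \qquad P_\ell := \sum_j u_{\ell j} \otimes u_{\ell j} \text{ (on legs 3, 4).}
\]
The column relation $\sum_i u_{i\ell} = 1$ together with $\sum_\ell P_\ell = P$ yields $\sum_i q_i = \iota_{3,4}(P)$. Classically, $q_i$ corresponds to $\mathbbm{1}_{t_1(s_2(i)) = t_2(s_2(i))}$, which expresses the agreement of $t_1$ and $t_2$ at the point $s_2(i)$: precisely the contribution of the midpoint step in the classical proof.

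The heart of the argument is the per-$i$ inequality $p_i + q_i - r_i \leq 1$. For fixed $i$, the family $\Theta_{k, \ell} := u_{ik} \otimes u_{i\ell} \otimes 1 \otimes 1$ consists of mutually orthogonal projections in $A^{\otimes 4}$ summing to $1$ (by $u_{ik}u_{ik'} = \delta_{k k'} u_{ik}$ and $\sum_k u_{ik} = 1$), and each commutes with every operator supported on legs $3, 4$. Expanding $1^{[1]} = \sum_k u_{ik}^{[1]}$ in $q_i$ and writing $S_{k, \ell} := \sum_j u_{kj} \otimes u_{\ell j}$ (a projection on legs $3, 4$), one gets
\[
p_i = \sum_k \Theta_{k, k}, \qquad q_i = \sum_{k, \ell} \Theta_{k, \ell} \, S_{\ell, \ell}, \qquad r_i = \sum_{k, \ell} \Theta_{k, \ell} \, S_{k, \ell},
\]
so that
\[
1 - (p_i + q_i - r_i) = \sum_{k, \ell} \Theta_{k, \ell} \bigl[ (1 - \delta_{k \ell}) + (S_{k, \ell} - S_{\ell, \ell}) \bigr].
\]
The bracketed coefficient (acting on legs $3, 4$) vanishes when $k = \ell$, and for $k \neq \ell$ equals $(1 - S_{\ell, \ell}) + S_{k, \ell}$, which is positive because $S_{\ell, \ell}$ is a projection (hence $\leq 1$) and $S_{k, \ell} \geq 0$. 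Each summand is then a projection times a commuting positive operator, hence positive; summing these yields $p_i + q_i - r_i \leq 1$. Summing over $i$ produces the claim.

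The main obstacle is identifying the right midpoint $q_i$: the naive choice $\iota_{3,4}(P_i)$ fails since the per-$i$ inequality breaks down even classically, but the classical bi-invariance argument points unambiguously to the operator above, and its leg-$2$ expansion $1 = \sum_k u_{ik}$ meshes perfectly with the orthogonal partition $\{\Theta_{k, \ell}\}$ used---in a simpler form---in the proof of Lemma \ref{lem: tensor metric properties}.
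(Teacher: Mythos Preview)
Your proof is correct. The expansions of $p_i$, $q_i$, $r_i$ in terms of the orthogonal partition $\{\Theta_{k,\ell}\}$ are accurate, $S_{k,\ell}$ is indeed a projection for all $k,\ell$ (since $u_{kj}u_{kj'}=\delta_{jj'}u_{kj}$), and the positivity of each summand $\Theta_{k,\ell}\bigl[(1-S_{\ell,\ell})+S_{k,\ell}\bigr]$ follows as you say. The identity $\sum_i q_i = \iota_{3,4}(P)$ via the column relation is also correct.

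Your route is genuinely different from the paper's, however. The paper does not introduce any midpoint or per-$i$ localization. It simply expands $\tilde{\Delta}(C_H)$, observes that the off-diagonal terms $\sum_{k\neq l} u_{ik}\otimes u_{il}\otimes u_{kj}\otimes u_{lj}$ form a positive operator (a sum of mutually orthogonal projections) and drops them, then rewrites the remaining diagonal part as $\iota_{1,2}(C_H)+\frac{1}{n}\sum_{i,k} u_{ik}\otimes u_{ik}\otimes(1-P_k)$ and bounds the second summand by $\iota_{3,4}(C_H)$ using $\sum_i u_{ik}\otimes u_{ik}\leq 1$. This is two lines of inequalities with no auxiliary objects. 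Your argument trades that brevity for a cleaner conceptual picture: the midpoint $q_i$ makes the analogy with the classical bi-invariant triangle inequality transparent, and the block decomposition along $\{\Theta_{k,\ell}\}$ isolates exactly where positivity is used. Both proofs ultimately rest on the same elementary facts (orthogonality of $u_{ij}$ along rows and columns, and that the relevant sums are projections), so neither is more general; the paper's is shorter, yours is more structured and better motivated.
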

	\begin{proof}
		\begin{align*}
			\tilde{\Delta}(C_H) & =\tilde{\Delta} \left(1 - \frac{1}{n} \sum_{i,j=1}^n u_{ij} \otimes u_{ij} \right) \\
			& = 1 \otimes 1 \otimes 1 \otimes 1 - \frac{1}{n} \sum_{i,j,k,l=1}^n u_{ik} \otimes u_{il} \otimes u_{kj} \otimes u_{lj} \\
			&  \leq 1 \otimes 1 \otimes 1 \otimes 1   - \frac{1}{n} \sum_{i,j,k=1}^n u_{ik} \otimes u_{ik} \otimes u_{kj} \otimes u_{kj} \\
			&  = 1 \otimes 1 \otimes 1 \otimes 1 - \frac{1}{n} \sum_{i,k=1}^n u_{ik} \otimes u_{ik} \otimes 1 \otimes 1 + \frac{1}{n} \sum_{i,k=1}^n u_{ik} \otimes u_{ik} \otimes 1 \otimes 1\\
			&
			- \frac{1}{n} \sum_{i,j,k=1}^n u_{ik} \otimes u_{ik} \otimes u_{kj} \otimes u_{kj} \\
			& = \iota_{1,2} (C_H) + \frac{1}{n} \sum_{i,k=1}^n u_{ik} \otimes u_{ik} \otimes 1 \otimes 1 \left( 1 - \sum_{j=1}^n 1 \otimes 1 \otimes u_{kj} \otimes u_{kj}
			\right) \\
			& \leq \iota_{1,2}(C_H) + \iota_{3,4}(C_H).
		\end{align*}
	\end{proof}

	\begin{proposition} \label{prop: convolution for tensor distance}
		Let $\phi_1, \phi_2, \psi_1, \psi_2 \in \cT(A)$. Then 
		\[
		\tensordistance(\phi_1 * \psi_1, \phi_2 * \psi_2) \leq \tensordistance(\phi_1, \phi_2) + \tensordistance(\psi_1, \psi_2).
		\]
	\end{proposition}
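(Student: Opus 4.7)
The plan is to mirror the strategy used for $\freedistance$ in Proposition \ref{prop: convolution}: given tracial tensor couplings $\varphi \in \Pi_{\tr,\otimes}(\phi_1,\phi_2)$ and $\eta \in \Pi_{\tr,\otimes}(\psi_1,\psi_2)$, I would construct an explicit tracial tensor coupling of $(\phi_1*\psi_1,\phi_2*\psi_2)$ using the auxiliary $*$-homomorphism $\tilde{\Delta}$ introduced just before Lemma \ref{comult-inequality}, and then bound its cost by invoking that lemma.

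First I would form the product trace $\varphi \otimes \eta$ on $(A \otimes A) \otimes (A \otimes A) \cong A^{\otimes 4}$, which is a trace because both factors are. Pulling it back along the $*$-homomorphism $\tilde{\Delta} \colon A \otimes A \to A^{\otimes 4}$ defines
\[
\xi := (\varphi \otimes \eta) \circ \tilde{\Delta} \in \mathcal{T}(A \otimes A).
\]
Next I would verify that $\xi$ has the correct marginals. Writing $\Delta(a) = \sum_{(a)} a_{(1)} \otimes a_{(2)}$ in Sweedler notation and using the identity $\tilde{\Delta}(a \otimes 1) = \iota_{1,3}(\Delta(a) \otimes 1)$ from the paragraph defining $\tilde{\Delta}$, I compute
\[
\xi(a \otimes 1) = \sum_{(a)} \varphi(a_{(1)} \otimes 1)\, \eta(a_{(2)} \otimes 1) = \sum_{(a)} \phi_1(a_{(1)})\, \psi_1(a_{(2)}) = (\phi_1 * \psi_1)(a),
\]
and symmetrically, using $\tilde{\Delta}(1 \otimes a) = \iota_{2,4}(1 \otimes \Delta(a))$, that $\xi(1 \otimes a) = (\phi_2 * \psi_2)(a)$. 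Hence $\xi \in \Pi_{\tr,\otimes}(\phi_1*\psi_1,\phi_2*\psi_2)$.

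Finally I would apply Lemma \ref{comult-inequality} together with the positivity of $\varphi \otimes \eta$ to conclude
\[
\xi(C_H) = (\varphi \otimes \eta)(\tilde{\Delta}(C_H)) \leq (\varphi \otimes \eta)(\iota_{1,2}(C_H)) + (\varphi \otimes \eta)(\iota_{3,4}(C_H)) = \varphi(C_H) + \eta(C_H),
\]
where the last equality uses the marginal property of $\iota_{1,2}$ and $\iota_{3,4}$ together with the fact that $\varphi$ and $\eta$ are states. Taking the infimum first over $\varphi \in \Pi_{\tr,\otimes}(\phi_1,\phi_2)$ and then over $\eta \in \Pi_{\tr,\otimes}(\psi_1,\psi_2)$ yields the required subadditivity.

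I expect no serious obstacle, since the heart of the matter has already been isolated in Lemma \ref{comult-inequality}; everything else is bookkeeping. The only mild subtlety is to confirm that $\tilde{\Delta}$ is well-defined as a $*$-homomorphism between the relevant minimal tensor products, but this follows from $\Delta$ being a $*$-homomorphism $A \to A \otimes A$ composed with the evident tensor inclusions, and from the standard extension of product tracial states from the algebraic to the minimal tensor product.
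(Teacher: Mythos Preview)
Your proposal is correct and follows essentially the same approach as the paper's own proof: both construct the coupling $(\varphi \otimes \eta) \circ \tilde{\Delta}$, verify its marginals via Sweedler notation, and then apply Lemma \ref{comult-inequality} together with positivity of $\varphi \otimes \eta$ before taking the infimum.
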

	
	\begin{proof}
		Let $\phi, \psi$ be the tracial tensor couplings for $\phi_1, \phi_2$ and $\psi_1, \psi_2$ respectively. We define $\phi \tilde{*} \psi = (\phi \otimes \psi) \circ \tilde{\Delta} \in \cT (A \otimes A)$ and show that it is a tracial tensor coupling for the convolutions $\phi_1 * \psi_1$ and $\phi_2 * \psi_2$. For any $a \in A$, we have 
		\begin{align*}
			\phi \tilde{*} \psi (a \otimes 1) & = (\phi \otimes \psi) \tilde{\Delta}(a \otimes 1) \\
			& = (\phi \otimes \psi) \left( \sum_{(a)} a_{(1)} \otimes 1 \otimes a_{(2)} \otimes 1 \right)\\
			& = \sum_{(a)} \phi(a_{(1)} \otimes 1) \psi(a_{(2)} \otimes 1) \\
			& = \sum_{(a)} \phi_1(a_{(1)}) \psi_1(a_{(2)})\\
			& = (\phi_1 \otimes \psi_1 )(\Delta(a)) = \phi_1 * \psi_1 (a).
		\end{align*}
		Similarly, we get $\phi \tilde{*} \psi (1 \otimes a) = \phi_2 * \psi_2 (a)$ for all $a \in A$. For any such coupling of the convolutions $\phi_1 * \psi_1 $ and $\phi_2 * \psi_2$, we have the following inequalty.
		Since $\phi \otimes \psi $ is positive, by \cref{comult-inequality}, we get
		\begin{align*}
			\phi \tilde{* } \psi (C_H) & = (\phi \otimes \psi) (\tilde{\Delta}(C_H))\\
			& \leq \phi \otimes \psi (\iota_{1,2}(C_H)) + \phi \otimes \psi (\iota_{3,4} (C_H))\\
			& = \phi (C_H) \psi(1 \otimes 1) + \phi(1 \otimes 1) \psi (C(H)) = \phi(C_H) + \psi(C_H).
		\end{align*}
		
		Now, taking the infimum over all the couplings of the convolutions gives us the required inequality
	\end{proof}

	\section{Connections and future directions}
	
	\subsection{Comparison of distances and recovery of classical case} \label{subsec: comparison}
	
	The next proposition gives inequalities relating the different versions of the Hamming distance that we have defined.
	
	\begin{proposition} \label{prop: comparison of distances}
		For $\varphi_1$, $\varphi_2 \in \mathcal{T}(C(S_n^+))$, we have
		\[
		\ldistance(\varphi_1,\varphi_2) \leq \freedistance(\varphi_1,\varphi_2) \leq \tensordistance(\varphi_1,\varphi_2) \leq 1.
		\]
	\end{proposition}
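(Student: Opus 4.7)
The plan is to prove the three inequalities $\ldistance \leq \freedistance \leq \tensordistance \leq 1$ in turn, each via an explicit coupling or a direct comparison of the integrands.

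For $\tensordistance(\varphi_1,\varphi_2) \leq 1$, the product trace $\varphi_1 \otimes \varphi_2$ is a tracial tensor coupling in $\Pi_{\tr,\otimes}(\varphi_1,\varphi_2)$, and by direct computation
\[
(\varphi_1 \otimes \varphi_2)(C_H) = 1 - \frac{1}{n} \sum_{i,j=1}^n \varphi_1(u_{ij}) \varphi_2(u_{ij}) \leq 1,
\]
since each $\varphi_k(u_{ij}) \geq 0$.

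For $\freedistance \leq \tensordistance$, the key observation is that every tracial tensor coupling $\varphi \in \Pi_{\tr,\otimes}(\varphi_1,\varphi_2)$ canonically yields a tracial coupling with \emph{commuting} images. Applying the GNS construction of Lemma \ref{lem: GNS tracial von Neumann algebra} to $\varphi$ produces a tracial von Neumann algebra $(M,\tau)$ with $M = \pi_\varphi(A \otimes A)''$; define $\alpha_1(a) = \pi_\varphi(a \otimes 1)$ and $\alpha_2(a) = \pi_\varphi(1 \otimes a)$. Then $\alpha_1$ and $\alpha_2$ are $*$-homomorphisms $A \to M$ with $\tau \circ \alpha_k = \varphi_k$ and with commuting ranges. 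Commuting projections satisfy $p \wedge q = pq$, so
\[
1 - \frac{1}{n} \sum_{i,j=1}^n \tau(\alpha_1(u_{ij}) \wedge \alpha_2(u_{ij})) = 1 - \frac{1}{n} \sum_{i,j=1}^n \varphi(u_{ij} \otimes u_{ij}) = \varphi(C_H).
\]
Taking the infimum over $\varphi$ gives $\freedistance(\varphi_1,\varphi_2) \leq \tensordistance(\varphi_1,\varphi_2)$.

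For $\ldistance \leq \freedistance$, the main ingredient is the operator inequality $|p - q| \leq p \vee q - p \wedge q$ valid for any two projections $p, q$ in a von Neumann algebra. To see this, note that $p - q$ vanishes on $1 - (p \vee q)$ (both $p, q$ are zero there) and on $p \wedge q$ (both act as the identity), so $p - q$ is supported on the projection $r := p \vee q - p \wedge q$; combined with $\norm{p-q} \leq 1$, this yields $|p - q| \leq r$ as operators. Taking the trace and applying Fact \ref{fact: trace of max/min of projections} gives
\[
\norm{p-q}_{L^1(M,\tau)} \leq \tau(p) + \tau(q) - 2\tau(p \wedge q).
\]
For any tracial coupling $(M,\tau,\alpha_1,\alpha_2)$ of $(\varphi_1,\varphi_2)$, apply this with $p = \alpha_1(u_{ij})$ and $q = \alpha_2(u_{ij})$, sum over $i,j$, and use $\sum_{i,j} \varphi_k(u_{ij}) = n$ to obtain
\[
\frac{1}{2n} \sum_{i,j=1}^n \norm{\alpha_1(u_{ij}) - \alpha_2(u_{ij})}_{L^1(M,\tau)} \leq 1 - \frac{1}{n} \sum_{i,j=1}^n \tau(\alpha_1(u_{ij}) \wedge \alpha_2(u_{ij})),
\]
and take the infimum over tracial couplings.

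No step presents a serious obstacle; the one piece requiring a short justification is the projection inequality $|p-q| \leq p \vee q - p \wedge q$, but this is immediate from the support containment and the bound $\norm{p-q} \leq 1$. Everything else is direct bookkeeping using the universal property of tracial couplings and the positivity properties of $C_H$.
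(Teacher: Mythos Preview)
Your proof is correct and follows essentially the same route as the paper. The only minor variation is in the step $\ldistance \leq \freedistance$: the paper bounds $\norm{p-q}_{L^1}$ via the $L^1$ triangle inequality through the intermediate point $p \wedge q$, obtaining $\tau(p) + \tau(q) - 2\tau(p \wedge q)$ directly, whereas you establish the operator inequality $|p-q| \leq p \vee q - p \wedge q$ and then invoke Fact~\ref{fact: trace of max/min of projections}; both arguments yield the same bound with comparable effort.
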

	
	\begin{proof}
		To show that $\ldistance(\varphi_1,\varphi_2) \leq \freedistance(\varphi_1,\varphi_2)$, consider a tracial coupling $(M,\tau,\alpha_1,\alpha_2)$.  Observe that
		\begin{align*}
			\norm{\alpha_1(u_{ij}) - \alpha_2(u_{ij})}_{L^1(M,\tau)} &\leq \norm{\alpha_1(u_{ij}) - \alpha_1(u_{ij}) \wedge \alpha_2(u_{ij})}_{L^1(M,\tau)} +  \norm{\alpha_2(u_{ij}) - \alpha_1(u_{ij}) \wedge \alpha_2(u_{ij})}_{L^1(M,\tau)} \\
			&= \tau[\alpha_1(u_{ij}) - \alpha_1(u_{ij}) \wedge \alpha_2(u_{ij})] + \tau[\alpha_2(u_{ij}) - \alpha_1(u_{ij}) \wedge \alpha_2(u_{ij})] \\
			&= \tau(\alpha_1(u_{ij})) + \tau(\alpha_2(u_{ij})) - 2 \tau(\alpha_1(u_{ij}) \wedge \alpha_2(u_{ij}))
		\end{align*}
		since $\alpha_1(u_{ij}) \geq \alpha_1(u_{ij}) \wedge \alpha_2(u_{ij})$ and the same for $\alpha_2(u_{ij})$.  Since $\frac{1}{n} \sum_{i,j=1}^n u_{ij} = 1$, this implies that
		\[
		\frac{1}{n} \sum_{i,j=1}^n \norm{\alpha_1(u_{ij}) - \alpha_2(u_{ij})}_{L^1(M,\tau)} \leq 2 \left( 1 - \frac{1}{n} \sum_{i,j=1}^n \tau(\alpha_1(u_{ij}) \wedge \alpha_2(u_{ij})) \right).
		\]
		Since the tracial coupling was arbitary, we obtain the desired inequality upon dividing by $2$.
		
		To show that $\freedistance(\varphi_1,\varphi_2) \leq \tensordistance(\varphi_1,\varphi_2)$, let $\varphi \in \Pi_{\tr,\otimes}(\varphi_1,\varphi_2)$.  Let $(H_\varphi,\pi_\varphi)$ be the GNS construction associated to $A \otimes A$ and $\varphi$.  Let $M = \pi_{\varphi}(A \otimes A)''$ and let $\tau$ be the vector state on $B(H_\varphi)$ given by $\widehat{1}$.  By Lemma \ref{lem: GNS tracial von Neumann algebra}, $(M,\tau)$ is a tracial von Neumann algebra.  Let $\alpha_1$, $\alpha_2: A \to M$ be the inclusions of the first and second tensorands into $A \otimes A$ composed with the map $\pi_{\varphi}: A \otimes A \to M$.  Then $(M,\tau,\alpha_1,\alpha_2)$ is a tracial coupling in the sense of Definition \ref{def: tracial coupling}.  Therefore,
		\[
		\freedistance(\varphi_1,\varphi_2) \leq 1 - \frac{1}{n} \sum_{i,j=1}^n \tau(\alpha_1(u_{ij}) \wedge \alpha_2(u_{ij})).
		\]
		Since $\alpha_1(A)$ and $\alpha_2(A)$ commute, $\alpha_1(u_{ij}) \wedge \alpha_2(u_{ij}) = \alpha_1(u_{ij}) \alpha_2(u_{ij})$.  Thus,
		\[
		\freedistance(\varphi_1,\varphi_2) \leq 1 - \frac{1}{n} \sum_{i,j=1}^n \tau(\alpha_1(u_{ij}) \alpha_2(u_{ij})) = 1 - \frac{1}{n} \sum_{i,j=1}^n \varphi(u_{ij} \otimes u_{ij}).
		\]
		Taking the infimum over $\varphi$, we have $\freedistance(\varphi_1,\varphi_2) \leq \tensordistance(\varphi_1,\varphi_2)$ as desired.
		
		The inequality $\tensordistance(\varphi_1,\varphi_2) \leq 1$ is immediate because the cost operator $C_H$ satisfies $C_H \leq 1$.
	\end{proof}
	
	We can further compare the distances defined in this paper with the norm $\norm{\varphi_1 - \varphi_2}_{C(S_n^+)^*}$, where $C(S_n^+)^*$ denotes the dual space.  This is motivated by the fact that Wasserstein distance is bounded by a constant time the total variation distance for probability measures on a compact metric space.  Recall that probability measures on a compact Hausdorff space $X$ are equivalent to states on $C(X)$.  The total variation distance of two probability measures is $d_{TV}(\mu,\nu) = \sup_{A \subseteq X} \{|\mu(A) - \nu(A)|\}$ where $A$ ranges over Borel subsets of $X$, and this agrees with $(1/2) \norm{\mu - \nu}_{C(X)^*}$.  Hence, for states $\varphi_1$, $\varphi_2$ on a $\mathrm{C}^*$-algebra $A$, the quantity $(1/2) \norm{\varphi_1 - \varphi_2}_{A^*}$ serves as an analog of the total variation distance.

	\begin{proposition} \label{prop: comparison TV}
		For $\varphi_1$, $\varphi_2 \in \mathcal{T}(C(S_n^+))$, we have
		\[
		\freedistance(\varphi_1,\varphi_2) \leq \frac{1}{2} \norm{\varphi_1 - \varphi_2}_{C(S_n^+)^*}.
		\]
	\end{proposition}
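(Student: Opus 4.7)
The plan is to mimic the classical argument that Wasserstein distance with respect to a diameter-$1$ cost is bounded by half the total variation. Set $A = C(S_n^+)$ and $t = \tfrac{1}{2}\norm{\varphi_1 - \varphi_2}_{A^*}$. The first step is to produce a common positive tracial functional $\psi$ with $\psi \leq \varphi_1$, $\psi \leq \varphi_2$, and $\psi(1) = 1-t$; the remaining ``singular'' parts $\varphi_j - \psi$ will then carry total mass $t$ each.

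To construct $\psi$, take the Jordan decomposition $\varphi_1 - \varphi_2 = \nu_+ - \nu_-$ in $A^*$, so that $\norm{\nu_+} + \norm{\nu_-} = 2t$.  Since $\varphi_j(1) = 1$ forces $\nu_+(1) = \nu_-(1)$, we get $\nu_\pm(1) = \norm{\nu_\pm} = t$.  Both $\nu_\pm$ are tracial: for each unitary $u \in A$, the functional $\nu \mapsto \nu(u^* \cdot u)$ preserves positivity and norm, so applying it to the Jordan decomposition of $\varphi_1 - \varphi_2$ and using uniqueness gives $\nu_\pm = \nu_\pm(u^* \cdot u)$, so $\nu_\pm$ are unitarily invariant and hence tracial.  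To see that $\psi := \varphi_1 - \nu_+ = \varphi_2 - \nu_-$ is positive, pass to the GNS tracial von Neumann algebra $(M_\omega, \tau_\omega, \pi_\omega)$ of $\omega = \tfrac{1}{2}(\varphi_1 + \varphi_2)$ via Lemma \ref{lem: GNS tracial von Neumann algebra}: since $\varphi_j \leq 2\omega$, Radon--Nikodym in $L^1(M_\omega, \tau_\omega)$ provides central positive densities $h_1, h_2$ with $\varphi_j(x) = \tau_\omega(h_j \pi_\omega(x))$ and $h_1 + h_2 = 2$.  The Jordan decomposition then corresponds to $\nu_\pm(x) = \tau_\omega((h_1 - h_2)_\pm \pi_\omega(x))$, and the pointwise inequality $h_1 \geq (h_1 - h_2)_+$ on the spectrum of the central element $h_1 - h_2$ yields $\varphi_1 \geq \nu_+$, i.e.\ $\psi \geq 0$.

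With $\psi$ in hand, I build the coupling as a direct sum.  Write $\tilde\psi = \psi/(1-t)$, $\tilde\mu_1 = \nu_+/t$, $\tilde\mu_2 = \nu_-/t$ (all tracial states), so that $\varphi_j = (1-t)\tilde\psi + t \tilde\mu_j$.  Let $(M_0, \tau_0, \pi_0)$ be the GNS tracial von Neumann algebra of $\tilde\psi$, and let $(M', \tau', \alpha_1', \alpha_2')$ be any tracial coupling of $(\tilde\mu_1, \tilde\mu_2)$ (for instance, via the tensor product GNS).  Form $M = M_0 \oplus M'$ with trace $\tau(x_0, x') = (1-t)\tau_0(x_0) + t\tau'(x')$, which is faithful, normal, and tracial when $0 < t < 1$, and set $\alpha_j(a) = (\pi_0(a), \alpha_j'(a))$; then $\tau \circ \alpha_j = \varphi_j$ by direct computation.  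Since projection-lattice operations in a direct sum decompose blockwise, $\alpha_1(u_{ij}) \wedge \alpha_2(u_{ij}) = (\pi_0(u_{ij}), \alpha_1'(u_{ij}) \wedge \alpha_2'(u_{ij}))$, so
\[
\tau(\alpha_1(u_{ij}) \wedge \alpha_2(u_{ij})) \geq (1-t)\tilde\psi(u_{ij}).
\]
Summing over $i,j$ and using $\sum_{i,j=1}^n u_{ij} = n \cdot 1$ gives $\freedistance(\varphi_1,\varphi_2) \leq 1 - \tfrac{1}{n}(1-t)n = t$, as required.  The edge cases $t = 0$ (which forces $\varphi_1 = \varphi_2$ and hence $\freedistance = 0$) and $t = 1$ (where the bound reduces to Proposition \ref{prop: comparison of distances}) should be handled separately to avoid dividing by zero in the normalization.

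The main technical point is the positivity of $\psi$: the conclusion $\varphi_1 \geq \nu_+$ is not automatic for general states (it can fail already on $M_2$), and exploits crucially that traces correspond to \emph{central} $L^1$-densities, on which functional calculus behaves classically.
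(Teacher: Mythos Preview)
Your proof is correct and follows essentially the same strategy as the paper: extract a common positive tracial part $\psi$ of mass $1-t$, normalize the three pieces, and build a direct-sum coupling in which the two maps agree on the $\psi$-block, so that the cost is bounded by the mass $t$ of the remaining block. The edge cases $t=0$ and $t=1$ are handled identically.

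The one genuine difference is in how you establish the key inequality $\varphi_1 \geq \nu_+$ (equivalently $\psi \geq 0$). The paper cites the tracial Jordan decomposition of Cuntz--Pedersen \cite[Proposition 2.8]{CuntzPedersen1979} and uses the explicit sup-formula from that proof to deduce $\varphi_+ \leq \varphi_1$ directly. You instead pass to the GNS von Neumann algebra of the average $\omega = \tfrac12(\varphi_1+\varphi_2)$, use that tracial functionals dominated by $\omega$ have \emph{central} Radon--Nikodym densities, and then read off the inequality from the commutative identity $h_1 \geq (h_1-h_2)_+$. This is a nice and self-contained alternative: it avoids an external reference and makes transparent why traciality is essential (centrality of the densities is exactly what fails for general states, as you note). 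The paper's route is slightly shorter once Cuntz--Pedersen is granted, but yours is arguably more elementary. Your separate argument for traciality of $\nu_\pm$ via unitary invariance of the Jordan decomposition is also correct, though once you have the central-density picture it becomes redundant (the $(h_1-h_2)_\pm$ are visibly central, hence the associated functionals tracial).
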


	\begin{proof}
		We begin by recalling the analog of Jordan decomposition for tracial functionals.  Suppose $A$ is a $\mathrm{C}^*$-algebra and $\varphi \in A^*$ is self-adjoint ($\varphi(a^*) = \overline{\varphi(a)}$) and tracial.  By \cite[Proposition 2.8]{CuntzPedersen1979}, $\varphi$ has a unique decomposition as $\varphi_+ - \varphi_-$, where $\varphi_+$ and $\varphi_-$ are positive and tracial with $\norm{\varphi} = \norm{\varphi_+} + \norm{\varphi_-}$.  Furthermore, the proof gives the following description of $\varphi_+$.  For $x, y \in A_+$, write $x \sim y$ if there exist elements $(u_n)_{n \in \N}$ in $A$ such that
		\[
		x = \sum_{n \in \N} u_n^*u_n, \qquad y = \sum_{n \in \N} u_nu_n^*,
		\]
		where the sums are norm-convergent (see \cite[p. 136]{CuntzPedersen1979}).  Then for $x \in A_+$,
		\begin{align*}
			\varphi_+(x) &= \sup \{ \varphi(y): y, z \in A_+, y + z \sim x\} \\
			\varphi_-(x) &= \sup \{ -\varphi(y): y, z \in A_+, y + z \sim x\}.
		\end{align*}
		Now suppose that $\varphi = \varphi_1 - \varphi_2$ where $\varphi_1$, $\varphi_2$ are tracial states on $A$ (note that such a $\varphi$ is automatically tracial and self-adjoint).  In this case, if $y + z \sim x$, then
		\[
		\varphi(y) = \varphi_1(y) - \varphi_2(y) \leq \varphi_1(y) \leq \varphi_1(y+z) = \varphi_1(x).
		\]
		Taking the supremum over such $y$, $z$, we obtain $\varphi_+(x) \leq \varphi_1(x)$.  By symmetrical reasoning $\varphi_- \leq \varphi_2$.
		
		Hence, for tracial states $\varphi_1$, $\varphi_2$ on $A$, there exist tracial positive functionals $\varphi_+$ and $\varphi_-$ with
		\begin{align*}
			\varphi_1 - \varphi_2 &= \varphi_+ - \varphi_-, & \varphi_+ &\leq \varphi_1, \\
			\norm{\varphi_1 - \varphi_2}_{A^*} &= \norm{\varphi_+}_{A^*} + \norm{\varphi_-}_{A^*}, & \varphi_- &\leq \varphi_2,
		\end{align*}
		and furthermore,
		\[
		\norm{\varphi_+}_{A^*} - \norm{\varphi_-}_{A^*} = \varphi_+(1) - \varphi_-(1) = \varphi_1(1) - \varphi_2(1) = 0,
		\]
		whence
		\[
		\norm{\varphi_+}_{A^*} = \norm{\varphi_-}_{A^*} = \frac{1}{2} \norm{\varphi_1 - \varphi_2}_{A^*}.
		\]
		
		Now let us specialize to the case of $A = C(S_n^+)$.  Note that if $\varphi_1 = \varphi_2$, then the claimed statement holds with both sides being zero.  Moreover, if $\norm{\varphi_1 - \varphi_2}_{A^*} = 2$, it is also immediate since $\freedistance(\varphi_1,\varphi_2) \leq 1$ by Proposition \ref{prop: comparison of distances}.  Hence, assume without loss of generality that $0 < \norm{\varphi_1 - \varphi_2}_{A^*} < 2$, so that $0 < \norm{\varphi_+}_{A^*} < 1$.  Let
		\[
		\psi = \varphi_1 - \varphi_+ = \varphi_2 - \varphi_-.
		\]
		Let $\overline{\psi} = \psi / \norm{\psi}$ and define $\overline{\varphi}_+$ and $\overline{\varphi}_-$ analogously.  Thus, $\overline{\psi}$, $\overline{\varphi}_+$, and $\overline{\varphi}_-$ are tracial states on $A$.  Let $N = \pi_{\overline{\psi}}(A)''$, $M_+ = \pi_{\overline{\varphi}_+}(A)''$, and $M_- = \pi_{\overline{\varphi}_-}(A)''$ be the corresponding von Neumann algebras.  Then let
		\[
		M = N \oplus (M_+ \overline{\otimes} M_-), \qquad \tau = \psi(1) \overline{\psi} \oplus \varphi_+(1) (\overline{\varphi}_+ \otimes \overline{\varphi}_-).
		\]
		Let $\alpha_1, \alpha_2: A \to M$ be given by
		\begin{align*}
			\alpha_1(a) &= \pi_{\overline{\psi}}(a) \oplus (\pi_{\overline{\varphi}_+}(a) \otimes 1)\\
			\alpha_2(a) &= \pi_{\overline{\psi}}(a) \oplus (1 \otimes \pi_{\overline{\varphi}_-}(a)).
		\end{align*}
		Thus, $(M,\tau,\alpha_1,\alpha_2)$ is a tracial coupling of $\varphi_1 = \psi + \varphi_+$ and $\varphi_2 = \psi + \varphi_-$.  Note that
		\[
		\alpha_1(u_{ij}) \wedge \alpha_2(u_{ij}) \geq \pi_{\overline{\psi}}(u_{ij}) \oplus 0.
		\]
		Hence,
		\begin{multline*}
			1 - \frac{1}{n} \sum_{i,j=1}^n \tau(\alpha_1(u_{ij}) \wedge \alpha_2(u_{ij})) \leq 1 - \frac{1}{n} \sum_{i,j=1}^n \psi(1) \overline{\psi}(u_{ij}) = 1 - \frac{1}{n} \sum_{i=1}^n \psi(1) \\
			= 1 - \psi(1) = \varphi_+(1) = \frac{1}{2} \norm{\varphi_1 - \varphi_2}_{C(S_n^+)^*},
		\end{multline*}
		which proves the asserted inequality.
	\end{proof}
	
	Every probability measure on $S_n$ gives rise to a state on $C(S_n)$ and hence a state on $C(S_n^+)$ via the quotient map $\theta$ from \ref{theorem:quotientmap}. We will now show that the non-commutative Wasserstein distance $\freedistance(\varphi_1,\varphi_2)$ agrees with the classical $L^1$ Wasserstein distance of $\mu_1$ and $\mu_2$ with respect to the normalized Hamming distance. To prove this, we must extract tracial couplings of the states from classical couplings of the measures and vice versa.  To obtain a classical coupling from a tracial coupling a von Neumann algebra $M$ that is not necessarily commutative, we use the trick of separating the left and right multiplication as in \cite[Theorem 1.5]{BianeVoiculescu2001}, closely related to Connes' joint distribution trick.
	
	\begin{proposition} \label{prop: recovery of classical distance}
		Let $\mu_1$ and $\mu_2 \in \mathcal{P}(S_n)$, and let $\varphi_1$, $\varphi_2$ be the corresponding traces on $C(S_n^+)$ given by
		\[
		\varphi_j(a) = \int_{S_n} \theta(a) \,d\mu_j.
		\]
		Let $W_1$ be the classical $L^1$-Wasserstein distance on $\mathcal{P}(S_n)$ associated to the normalized Hamming distance $d_H$ on $S_n$.  Then
		\[
		\ldistance(\varphi_1,\varphi_2) = \freedistance(\varphi_1,\varphi_2) = \tensordistance(\varphi_1,\varphi_2) = W_1(\mu_1,\mu_2).
		\]
	\end{proposition}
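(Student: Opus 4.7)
The plan is to prove the two bookend inequalities $\tensordistance(\varphi_1,\varphi_2) \leq W_1(\mu_1,\mu_2) \leq \ldistance(\varphi_1,\varphi_2)$; together with Proposition \ref{prop: comparison of distances} these squeeze all three distances to $W_1(\mu_1,\mu_2)$.

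For the upper bound, I would take any classical coupling $\pi \in \mathcal{P}(S_n \times S_n)$ of $(\mu_1,\mu_2)$, viewed as a tracial state on $C(S_n \times S_n) \cong C(S_n) \otimes C(S_n)$, and pull back via $\theta \otimes \theta : C(S_n^+) \otimes C(S_n^+) \to C(S_n) \otimes C(S_n)$ to obtain $\varphi := \pi \circ (\theta \otimes \theta) \in \Pi_{\tr,\otimes}(\varphi_1,\varphi_2)$. A short computation using $\theta(u_{ij}) = \mathbbm{1}_{\sigma(i)=j}$ and $\sum_j \mathbbm{1}_{\sigma(i)=j} \mathbbm{1}_{\sigma'(i)=j} = \mathbbm{1}_{\sigma(i)=\sigma'(i)}$ gives $\varphi(C_H) = \int d_H\,d\pi$, so minimizing over $\pi$ yields $\tensordistance(\varphi_1,\varphi_2) \leq W_1(\mu_1,\mu_2)$.

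For the lower bound, let $(M,\tau,\alpha_1,\alpha_2)$ be any tracial coupling. Since $\varphi_j$ vanishes on $\ker(\theta)$, the identity $\tau(\alpha_j(x)^* \alpha_j(x)) = \varphi_j(x^*x) = 0$ combined with faithfulness of $\tau$ on $M$ forces $\alpha_j$ to kill $\ker(\theta)$, hence $\alpha_j = \overline{\alpha}_j \circ \theta$ for a $*$-homomorphism $\overline{\alpha}_j : C(S_n) \to M$. The two images are commutative but need not commute with each other, so to extract a classical coupling I would exploit the left-right representations on $L^2(M,\tau)$: the commuting pair $f \mapsto \lambda(\overline{\alpha}_1(f))$ and $g \mapsto \rho(\overline{\alpha}_2(g))$ assembles into a representation of $C(S_n \times S_n)$, and the vector state at $\widehat{1}$ yields a classical coupling $\pi$ of $\mu_1,\mu_2$. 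Using $\rho(b)\widehat{1} = \widehat{b}$, $\lambda(a)\widehat{b} = \widehat{ab}$, and traciality shows
\[
\int d_H\,d\pi \;=\; 1 - \frac{1}{n} \sum_{i,j=1}^n \tau(\alpha_1(u_{ij}) \alpha_2(u_{ij})).
\]

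The final ingredient is a projection inequality: for any two projections $p,q$ in a von Neumann algebra, $|p-q|$ is a positive contraction, so functional calculus (using $\sqrt{t} \geq t$ on $[0,1]$) gives $|p-q| \geq (p-q)^2 = p + q - pq - qp$, whence $\tau(|p-q|) \geq \tau(p) + \tau(q) - 2\tau(pq)$. Applying this with $p = \alpha_1(u_{ij})$, $q = \alpha_2(u_{ij})$, summing over $i,j$, and using $\sum_{i,j} \tau(\alpha_k(u_{ij})) = n$ (from $\sum_j u_{ij} = 1$), I obtain
\[
\frac{1}{2n} \sum_{i,j=1}^n \norm{\alpha_1(u_{ij}) - \alpha_2(u_{ij})}_{L^1(M,\tau)} \;\geq\; 1 - \frac{1}{n} \sum_{i,j=1}^n \tau(\alpha_1(u_{ij}) \alpha_2(u_{ij})) \;=\; \int d_H\,d\pi \;\geq\; W_1(\mu_1,\mu_2).
\]
Taking the infimum over tracial couplings completes the chain. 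The main obstacle is the first step of the lower bound---producing a classical coupling of $\mu_1,\mu_2$ from a noncommutative tracial coupling---for which the left-right regular representation, available specifically in the tracial setting, is the crucial tool.
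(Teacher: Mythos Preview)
Your proposal is correct and follows essentially the same route as the paper: prove $\tensordistance \leq W_1$ by pulling back classical couplings through $\theta \otimes \theta$, and prove $W_1 \leq \ldistance$ by factoring $\alpha_j$ through $C(S_n)$ and then using the left--right representation on $L^2(M,\tau)$ to extract a classical coupling. The only cosmetic differences are that the paper invokes a GNS-comparison lemma for the factorization step (you use faithfulness of $\tau$ directly, which is cleaner) and cites the Powers--St{\o}rmer inequality for the final estimate $\norm{p-q}_{L^2}^2 \leq \norm{p-q}_{L^1}$, whereas your functional-calculus argument $|p-q| \geq (p-q)^2$ is exactly the special case of Powers--St{\o}rmer for projections.
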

	
	\begin{proof}
		By Proposition \ref{prop: comparison of distances}, it suffices to show that
		\[
		\tensordistance(\varphi_1,\varphi_2) \leq W_1(\mu_1,\mu_2)
		\]
		and
		\[
		W_1(\mu_1,\mu_2) \leq \ldistance(\varphi_1,\varphi_2).
		\]
		For the first claim, suppose that $\mu$ is a classical coupling of $\mu_1$ and $\mu_2$.  Let $\varphi \in \mathcal{T}(C(S_n^+) \otimes C(S_n^+))$ be the state given by $\varphi(a) = \int_{S_n \times S_n}(\theta \otimes \theta)(a)\,d\mu$.  Then $\varphi(a \otimes 1) = \varphi_1(a)$ and symmetrically for $\varphi_2$.  We also have that
		\[
		\varphi(C_H) = \int_{S_n \times S_n} \left(1 - \frac{1}{n} \sum_{i,j=1}^n \theta(u_{ij}) \otimes \theta(u_{ij}) \right)\,d\mu = \int_{S_n \times S_n} d_H\,d\mu.
		\]
		Since $\mu$ was arbitrary, $\tensordistance(\varphi_1,\varphi_2) \leq W_1(\mu_1,\mu_2)$ which completes the proof.
		
		For the second claim, consider a tracial coupling $(M,\tau,\alpha_1,\alpha_2)$ of $\varphi_1$ and $\varphi_2$.  In order to extract a classical coupling from this, we will first reinterpret $(M,\tau,\alpha_1,\alpha_2)$ as a tracial coupling of states on the commutative $C(S_n)$.  Let $\overline{\varphi}_i$ be the state on $C(S_n)$ given by the measure $\mu_i$, so that $\varphi_i = \overline{\varphi}_i \circ \theta$.  Note that the image $\alpha_i (C(S_n^+))$ is $*$-isomorphic to image of the GNS representation $\pi_{\varphi_i} (C(S_n^+))$ using Lemma \ref{lem: quotient GNS isomorphism} (3) and the faithfulness of $\tau$, and the state $\tau|_{\alpha_i(C(S_n^+))}$ corresponds to $\varphi_i$. Moreover, by Lemma \ref{lem: quotient GNS isomorphism} (2), $\theta$ passes to an isomorphism from $\pi_{\varphi_i}(C(S_n^+))$ to $\pi_{\overline{\varphi}_i}(C(S_n))$ which also preserves the given states.  Hence, $\alpha_i(C(S_n^+))$ is isomorphic to $\pi_{\overline{\varphi}_i}(C(S_n))$, which implies that the map $\alpha_i$ factors through $C(S_n)$, or $\alpha_i = \overline{\alpha}_i \circ \theta$ for some $*$-homomorphism $\overline{\alpha}_i: C(S_n) \to M$; moreover, $\overline{\varphi}_i = \tau \circ \overline{\alpha}_i$. Therefore, $(M, \tau, \overline{\alpha}_1, \overline{\alpha}_2)$ is a tracial coupling of $\overline{\varphi}_1$ and $\overline{\varphi}_2$.
		
		Next, we construct a classical coupling out of $(M,\tau,\overline{\alpha}_1,\overline{\alpha}_2)$.  Recall from \S \ref{subsec: vN basic} that the left and right multiplication representations $\lambda: M \to B(L^2(M,\tau))$ and $\rho: M^{\op} \to B(L^2(M,\tau))$ commute.  Hence, $\lambda \circ \overline{\alpha}_1$ and $\rho \circ \overline{\alpha}_2$ are commuting representations of $C(S_n)$ and $C(S_n^{\op})$ on $L^2(M,\tau)$, and of course $C(S_n)^{\op} = C(S_n)$ since $C(S_n)$ is commutative.  Thus, $(\lambda \circ \alpha_1) \otimes (\rho \circ \alpha_2)$ gives a $*$-representation of $C(S_n) \otimes C(S_n)$ on $L^2(M,\tau)$.  Let $\overline{\varphi}$ be the state on $C(S_n) \otimes C(S_n)$ given by
		\[
		\overline{\varphi}(x) = \ip{\widehat{1}_M,(\lambda \circ \alpha_1) \otimes (\rho \circ \alpha_2)(x) \widehat{1}_M}_{L^2(M,\tau)},
		\]
		and let $\mu$ be the measure on $S_n \times S_n$ corresponding to the state $\overline{\varphi}$.  Note that given $a, b \in C(S_n)$,   
		\[
		\overline{\varphi}(a \otimes b) = \ip{\widehat{1}, \lambda(\alpha_1(a))\rho(\alpha_2(b)) \widehat{1}} = \ip{\widehat{1}, \widehat{\alpha_1(a) 1 \alpha_2(b) }} = \tau(\overline{\alpha}_1(a)\overline{\alpha}_2(b)).
		\]
		In particular, for $a \in C(S_n)$,
		\[
		\int_{S_n \times S_n} a \otimes 1 \, d\mu = \overline{\varphi}(a \otimes 1) = \tau(\overline{\alpha}_1(a)) = \overline{\varphi}_1 (a) = \int_{S_n} a \, d \mu_1,
		\]
		which shows that the first marginal of $\mu$ is $\mu_1$, and symmetrical the second marginal is $\mu_2$.  Hence, $\mu$ is a classical coupling of $(\mu_1,\mu_2)$.
		
		The cost of the classical coupling $\mu$ is given by 
		\begin{align*}
			\int_{S_n \times S_n} d_H(\sigma, \sigma') \, d\mu = 1 - \frac{1}{n} \sum_{i,j}^n \overline{\varphi}(\theta(u_{ij})\otimes \theta(u_{ij})) &= 1 - \frac{1}{n} \sum_{i,j=1}^n \tau(\overline{\alpha}_1(\theta(u_{ij})) \overline{\alpha}_2(\theta(u_{ij}))) \\
			&= 1 - \frac{1}{n} \sum_{i,j=1}^n \tau(\alpha_1(u_{ij}) \alpha_2(u_{ij})) \\
			&= \frac{1}{n} \sum_{i,j=1}^n \left( \frac{1}{2} \tau(\alpha_1(u_{ij})) + \frac{1}{2} \tau(\alpha_2(u_{ij}) - \tau(\alpha_1(u_{ij}) \alpha_2(u_{ij})) \right) \\
			&= \frac{1}{n} \sum_{i,j=1}^n \frac{1}{2} \norm{\alpha_1(u_{ij}) - \alpha_2(u_{ij})}_{L^2(M,\tau)}^2.
		\end{align*}
		Since $\alpha_1(u_{ij})$ and $\alpha_2(u_{ij})$ are nonnegative operators, the Powers-St{\o}rmer inequality shows that
		\[
		\norm{\alpha_1(u_{ij}) - \alpha_2(u_{ij})}_{L^2(M,\tau)}^2 \leq \norm{\alpha_1(u_{ij})^2 - \alpha_2(u_{ij})^2}_{L^1(M,\tau)} = \norm{\alpha_1(u_{ij}) - \alpha_2(u_{ij})}_{L^1(M,\tau)},
		\]
		and therefore
		\[
		W_1(\mu_1,\mu_2) \leq \int_{S_n \times S_n} d_H(\sigma, \sigma') \, d\mu \leq \frac{1}{n} \sum_{i,j=1}^n \frac{1}{2} \norm{\alpha_1(u_{ij}) - \alpha_2(u_{ij})}_{L^1(M,\tau)}.
		\]
		Since the tracial coupling $(M,\tau,\alpha_1,\alpha_2)$ was arbitrary, we obtain the asserted inequality.
	\end{proof}
	
	The next proposition characterizes when each of the distances studied here achieves the maximum value of $1$.  Condition (4) below means that the two states are supported on disjoint sets of entries of the canonical unitary $U = [u_{ij}]$, namely $\{(i,j): \varphi_1(u_{ij}) > 0\}$ and $\{(i,j): \varphi_2(u_{ij}) > 0\}$ are disjoint.  For instance, if $\varphi_j$ was induced by a measure $\mu_j$ on the classical permutation group as in Proposition \ref{prop: recovery of classical distance}, this would mean that each $\sigma_1 \in \supp(\mu_1)$ and $\sigma_2 \in \supp(\mu_2)$ are supported in disjoint sets of indices, or $\sigma_1^{-1} \sigma_2$ has no fixed points.
	
	\begin{proposition} \label{prop: distance one}
		Let $\varphi_1, \varphi_2 \in \mathcal{T}(C(S_n^+))$.  Then the following are equivalent:
		\begin{enumerate}[(1)]
			\item $\ldistance(\varphi_1,\varphi_2) = 1$.
			\item $\freedistance(\varphi_1,\varphi_2) = 1$.
			\item $\tensordistance(\varphi_1,\varphi_2) = 1$.
			\item $\sum_{i,j=1}^n \varphi_1(u_{ij}) \varphi_2(u_{ij}) = 0$.
		\end{enumerate}
	\end{proposition}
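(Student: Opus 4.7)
The plan is to close a cycle of implications $(1) \Rightarrow (2) \Rightarrow (3) \Rightarrow (4) \Rightarrow (1)$. The first two arrows are free from Proposition \ref{prop: comparison of distances}, which gives the standing chain $\ldistance(\varphi_1,\varphi_2) \leq \freedistance(\varphi_1,\varphi_2) \leq \tensordistance(\varphi_1,\varphi_2) \leq 1$: if the smallest of these three quantities equals $1$, the larger ones are squeezed up to $1$ as well.

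For $(3) \Rightarrow (4)$, I would evaluate $\tensordistance$ against the distinguished product coupling $\varphi_1 \otimes \varphi_2 \in \Pi_{\tr,\otimes}(\varphi_1,\varphi_2)$. A direct computation of the cost operator gives
\[
\tensordistance(\varphi_1,\varphi_2) \leq (\varphi_1 \otimes \varphi_2)(C_H) = 1 - \frac{1}{n} \sum_{i,j=1}^n \varphi_1(u_{ij}) \varphi_2(u_{ij}),
\]
so the hypothesis $\tensordistance(\varphi_1,\varphi_2) = 1$ forces $\sum_{i,j} \varphi_1(u_{ij}) \varphi_2(u_{ij}) \leq 0$. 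Since each factor is nonnegative (as the $u_{ij}$ are projections and the $\varphi_k$ are positive), every term in the sum must vanish.

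The remaining implication $(4) \Rightarrow (1)$ is the substantive step. Fix an arbitrary tracial coupling $(M,\tau,\alpha_1,\alpha_2)$ of $(\varphi_1,\varphi_2)$ and set $p_{ij} = \alpha_1(u_{ij})$, $q_{ij} = \alpha_2(u_{ij})$. Cauchy--Schwarz for the sesquilinear form $(x,y) \mapsto \tau(x^*y)$ applied to these projections gives
\[
|\tau(p_{ij} q_{ij})|^2 \leq \tau(p_{ij}) \tau(q_{ij}) = \varphi_1(u_{ij}) \varphi_2(u_{ij}) = 0.
\]
Using that the $p_{ij}, q_{ij}$ are projections and the tracial property, we have $\tau(p_{ij} q_{ij}) = \tau(q_{ij} p_{ij}^2 q_{ij}) = \|p_{ij} q_{ij}\|_{L^2(M,\tau)}^2$, so faithfulness of $\tau$ promotes the scalar vanishing to the operator identity $p_{ij} q_{ij} = 0$. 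Hence $p_{ij}$ and $q_{ij}$ are orthogonal projections, so $(p_{ij} - q_{ij})^2 = p_{ij} + q_{ij}$ is itself a projection, giving $|p_{ij} - q_{ij}| = p_{ij} + q_{ij}$ and
\[
\|p_{ij} - q_{ij}\|_{L^1(M,\tau)} = \tau(p_{ij}) + \tau(q_{ij}) = \varphi_1(u_{ij}) + \varphi_2(u_{ij}).
\]
Summing over $i,j$ and using the row and column relations $\sum_{i,j} \varphi_k(u_{ij}) = n$ yields $\frac{1}{2n} \sum_{i,j} \|p_{ij} - q_{ij}\|_{L^1(M,\tau)} = 1$. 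Since the coupling was arbitrary, $\ldistance(\varphi_1,\varphi_2) \geq 1$, and the matching upper bound from Proposition \ref{prop: comparison of distances} forces equality.

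There is no genuine obstacle here. The only subtlety worth flagging is the upgrade from the scalar identity $\tau(p_{ij} q_{ij}) = 0$ to the operator-level identity $p_{ij} q_{ij} = 0$, which uses both the traciality and the faithfulness of $\tau$ in an essential way and is what produces the exact evaluation of $\|p_{ij} - q_{ij}\|_{L^1(M,\tau)}$ rather than a mere lower bound.
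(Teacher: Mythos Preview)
Your proof is correct and follows the same cycle of implications as the paper, with the same use of the product coupling $\varphi_1 \otimes \varphi_2$ for $(3)\Rightarrow(4)$. The only difference is cosmetic: in $(4)\Rightarrow(1)$ the paper observes directly that for each $(i,j)$ one of $\varphi_1(u_{ij}), \varphi_2(u_{ij})$ vanishes, so by faithfulness one of $\alpha_1(u_{ij}), \alpha_2(u_{ij})$ is literally zero, whereas you use Cauchy--Schwarz to deduce the (weaker but sufficient) orthogonality $p_{ij}q_{ij}=0$; both routes yield $\lvert p_{ij}-q_{ij}\rvert = p_{ij}+q_{ij}$ and the same $L^1$ computation.
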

	
	\begin{proof}
		(1) $\implies$ (2) $\implies$ (3) is immediate from Proposition \ref{prop: comparison of distances}.
		
		(3) $\implies$ (4), note that $\varphi_1 \otimes \varphi_2$ is a tracial tensor coupling of $\varphi_1$, $\varphi_2$, and hence
		\[
		\tensordistance(\varphi_1,\varphi_2) \leq 1 - \frac{1}{n} \sum_{i,j=1}^n \varphi_1(u_{ij}) \varphi_2(u_{ij}).
		\]
		So if $\tensordistance(\varphi_1,\varphi_2) = 1$, then $\sum_{i,j=1}^n \varphi_1(u_{ij}) \varphi_2(u_{ij}) = 0$.
		
		For (4) $\implies$ (1), assume (4). Let $(M,\tau,\alpha_1,\alpha_2)$ be a tracial coupling of $(\varphi_1,\varphi_2)$.  For every $i,j$, either $\varphi_1(u_{ij}) = 0$ or $\varphi_2(u_{ij}) = 0$.  Thus, either $\tau(\alpha_1(u_{ij})) = 0$ or $\tau(\alpha_2(u_{ij})) = 0$, and hence either $\alpha_1(u_{ij}) = 0$ or $\alpha_2(u_{ij}) = 0$.  Therefore,
		\[
		\norm{\alpha_1(u_{ij}) - \alpha_2(u_{ij})}_{L^1(M,\tau)} = \norm{\alpha_1(u_{ij})}_{L^1(M,\tau)} + \norm{\alpha_2(u_{ij})}_{L^2(M,\tau)} = \tau(\alpha_1(u_{ij})) + \tau(\alpha_2(u_{ij})).
		\]
		Hence,
		\[
		\frac{1}{n} \sum_{i,j=1}^n \frac{1}{2} \norm{\alpha_1(u_{ij}) - \alpha_2(u_{ij})}_{L^1(M,\tau)} = \frac{1}{2n} \sum_{i,j=1}^n [\tau(\alpha_1(u_{ij})) + \tau(\alpha_2(u_{ij}))] = 1,
		\]
		and since $(M,\tau,\alpha_1,\alpha_2)$ was arbitrary, (1) holds.
	\end{proof}
	
	\subsection{Lipschitz seminorms induced by quantum Wasserstein distances} \label{subsec: seminorms}
	
	Recall that a Wasserstein distance on the trace space always induces a corresponding Lipschitz norm on the $\Cst$-algebra. 
	A minimal requirement for a reasonable definition is that the space of Lipschitz elements should be dense in the $\Cst$-algebra, so we now verify this property for $\freedistance$ and $\ldistance$.
	
	\begin{proposition} \label{prop: Lipschitz dense}
		For all indices $i_1$, $j_1$, \dots, $i_\ell$, $j_\ell \in [n]$, we have
		\[
		\norm{u_{i_1j_1} \dots u_{i_\ell j_\ell}}_{\Lip(\freedistance)} \leq \norm{u_{i_1j_1} \dots u_{i_\ell j_\ell}}_{\Lip(\ldistance)} \leq 2n \ell.
		\]
		In particular, the $*$-algebra generated by the $u_{ij}$'s is contained in the domains of the Lipschitz seminorms associated to $\freedistance$ and $\ldistance$ are dense. 
	\end{proposition}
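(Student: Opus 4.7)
The plan is to establish both inequalities directly from the definitions.  The first inequality is a formal consequence of Proposition \ref{prop: comparison of distances}: since $\ldistance \leq \freedistance$ pointwise on $\mathcal{T}(C(S_n^+)) \times \mathcal{T}(C(S_n^+))$, for any $a$ we have
\[
\frac{|\varphi_1(a) - \varphi_2(a)|}{\freedistance(\varphi_1,\varphi_2)} \leq \frac{|\varphi_1(a) - \varphi_2(a)|}{\ldistance(\varphi_1,\varphi_2)},
\]
whenever $\varphi_1 \neq \varphi_2$, and taking the supremum gives $\norm{a}_{\Lip(\freedistance)} \leq \norm{a}_{\Lip(\ldistance)}$.

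For the second inequality, fix a word $a = u_{i_1 j_1} \cdots u_{i_\ell j_\ell}$ and a tracial coupling $(M, \tau, \alpha_1, \alpha_2)$ of $\varphi_1, \varphi_2$.  Write $p_k = \alpha_1(u_{i_k j_k})$ and $q_k = \alpha_2(u_{i_k j_k})$, both of which are projections (so $\norm{p_k}, \norm{q_k} \leq 1$).  I would then use the standard telescoping identity
\[
p_1 \cdots p_\ell - q_1 \cdots q_\ell = \sum_{k=1}^\ell q_1 \cdots q_{k-1} (p_k - q_k) p_{k+1} \cdots p_\ell,
\]
apply $\tau$, and bound each summand using the non-commutative H\"older inequality $|\tau(xyz)| \leq \norm{x}_\infty \norm{y}_{L^1(M,\tau)} \norm{z}_\infty$ (see \S \ref{subsec: vN basic}).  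Since $q_1 \cdots q_{k-1}$ and $p_{k+1} \cdots p_\ell$ have operator norm at most $1$, each term is bounded by $\norm{p_k - q_k}_{L^1(M,\tau)}$, yielding
\[
|\varphi_1(a) - \varphi_2(a)| = |\tau(p_1 \cdots p_\ell) - \tau(q_1 \cdots q_\ell)| \leq \sum_{k=1}^\ell \norm{\alpha_1(u_{i_k j_k}) - \alpha_2(u_{i_k j_k})}_{L^1(M,\tau)}.
\]

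To relate the right-hand side to $\ldistance(\varphi_1,\varphi_2)$, each individual term is at most $\sum_{i,j=1}^n \norm{\alpha_1(u_{ij}) - \alpha_2(u_{ij})}_{L^1(M,\tau)}$, so the sum over $k$ is at most $\ell$ times this quantity, which equals $2n\ell$ times $(1/n) \sum_{i,j} (1/2) \norm{\alpha_1(u_{ij}) - \alpha_2(u_{ij})}_{L^1(M,\tau)}$.  Taking the infimum over tracial couplings (which is achieved by Lemma \ref{lem:inf-l1distance}) yields $|\varphi_1(a) - \varphi_2(a)| \leq 2n\ell \cdot \ldistance(\varphi_1,\varphi_2)$, hence $\norm{a}_{\Lip(\ldistance)} \leq 2n\ell$.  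The linear span of such words is a $*$-subalgebra (products close up, and $u_{ij}^* = u_{ij}$), so Lipschitz elements form a dense subspace of $C(S_n^+)$ with respect to both seminorms, since the $u_{ij}$ generate the $\Cst$-algebra.  I do not anticipate a serious obstacle: the only mild subtlety is making sure the H\"older bound genuinely gives $\norm{p_k - q_k}_{L^1(M,\tau)}$ rather than an $L^2$ norm, which is handled by the projection property $\norm{p_k}_\infty, \norm{q_k}_\infty \leq 1$.
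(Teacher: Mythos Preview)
Your proof is correct and follows essentially the same approach as the paper: both use the telescoping decomposition of $\alpha_1(a)-\alpha_2(a)$ together with the non-commutative H\"older inequality to bound $|\varphi_1(a)-\varphi_2(a)|$ by $\sum_{k=1}^\ell \norm{\alpha_1(u_{i_kj_k})-\alpha_2(u_{i_kj_k})}_{L^1(M,\tau)}$, and then compare this to the $\ldistance$ cost. The only cosmetic differences are the order of the telescoping (you place the $q$'s on the left, the paper places the $\alpha_1$'s on the left) and that the paper passes through $\ell\max_{i,j}\norm{\cdot}_{L^1}$ before bounding by the full sum, while you bound each term by the full sum directly.
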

	
	\begin{proof}
		The first inequality follows because $\ldistance \leq \freedistance$.  For the second inequality, let $\varphi_1$, $\varphi_2 \in \mathcal{T}(C(S_n^+))$, and let $(M,\tau,\alpha_1,\alpha_2)$ be a tracial coupling.  Then using the triangle inequality and non-commutative H{\"o}lder's inequality,
		\begin{align*}
			|\varphi_1(u_{i_1j_1} \dots u_{i_\ell j_\ell}) - \varphi_2(u_{i_1j_1} \dots u_{i_\ell j_\ell})| &\leq |\tau(\alpha_1(u_{i_1j_1} \dots u_{i_\ell j_\ell}) - \alpha_2(u_{i_1j_1} \dots u_{i_\ell j_\ell}))| \\
			&\leq \sum_{t=1}^\ell \lVert \alpha_1(u_{i_1j_1} \dots u_{i_{t-1} j_{t-1}})(\alpha_1(u_{i_tj_t}) - \alpha_2(u_{i_tj_t})) \\
			& \qquad \qquad \alpha_2(u_{i_{t+1}j_{t+1}} \dots u_{i_\ell j_\ell}) \rVert_{L^1(M,\tau)} \\
			&\leq \sum_{t=1}^\ell \norm{\alpha_1(u_{i_tj_t}) - \alpha_2(u_{i_tj_t})}_{L^1(M,\tau)} \\
			&\leq \ell \max_{i,j} \norm{\alpha_1(u_{ij}) - \alpha_2(u_{ij})}_{L^1(M,\tau)} \\
			&\leq 2n \ell \cdot \frac{1}{n} \sum_{i,j=1}^n \frac{1}{2} \norm{\alpha_1(u_{ij}) - \alpha_2(u_{ij})}_{L^1(M,\tau)}.
		\end{align*}
		Taking the infimum over the coupling $(M,\tau,\alpha_1,\alpha_2)$ shows that
		\[
		|\varphi_1(u_{i_1j_1} \dots u_{i_\ell j_\ell}) - \varphi_2(u_{i_1j_1} \dots u_{i_\ell j_\ell})| \leq 2n\ell \ldistance(\varphi_1,\varphi_2),
		\]
		which is the desired bound for the Lipschitz norm.
	\end{proof}

	\subsection{Further questions} \label{sec: questions}
	
	We hope that the different distances defined in this paper and these properties are the beginning of further investigations.  In particular, we propose the following questions.
	
	\begin{question}[Duality for Wasserstein distances and Lipschitz seminorms]
		Do the distances $\freedistance$ and $\ldistance$ defined in this paper satisfy the duality $d(\varphi,\psi) = \sup \{ |\varphi(x) - \psi(x)|: \norm{x}_{\Lip} \leq 1\}$?
	\end{question}
	
	\begin{question}[Topologies on $\mathcal{T}(C(S_n^+))$]
		We now know of at least four topologies on $\mathcal{T}(C(S_n^+))$:
		\begin{enumerate}[(1)]
			\item the weak-$*$ topology,
			\item the topology generated by $\ldistance$,
			\item the topology generated by $\freedistance$,
			\item the topology generated by the total variation distance, i.e.\ the restriction of the norm topology of $C(S_n^+)^*$.
		\end{enumerate}
		Each topology in this list is at least as strong as the ones above it,  thanks to Propositions \ref{prop: L distance properties}, \ref{prop: comparison of distances}, and \ref{prop: comparison TV}. We do not yet know if all these topologies are distinct, but this is quite plausible.  For instance, based on \cite[\S 5.5]{GangboJekelNamShlyakhtenko2022}, we expect that the $\ldistance$ would generate a stronger topology than the weak-$*$ topology.  Moreover, since projections $p_k$ can converge to $p$ without $p_k \wedge p$ converging $p$, we expect that $\freedistance$ is much stronger than $\ldistance$.  The computation of $\freedistance$ may also lead to interesting combinatorial problems about how to arrange non-commuting projections in a way that maximizes the pairwise intersections.
	\end{question}
	
	\begin{question}[Behavior under iterated convolution]
		For any faithful state on $C(S_n^+)$, the repeated convolution $\varphi^{*k}$ converges in the weak-$*$ topology as $k \to \infty$ to the Haar state $h$.  Under what conditions does $\varphi^{*k}$ converge to $h$ with respect to $\ldistance$ or $\freedistance$.  Can we describe the limit of $\varphi^{*k}$ when $\varphi$ is not faithful, and determine whether convergence occurs with respect to the metrics in this paper?  Another related question is how the metrics behave with respect to the convolution semigroups studied in \cite{FranzKulaSkalski2016}.  We remark that convergence in total variation distance for various convolution semigroups of states has been shown in \cite{Freslon2019,FTW2022}, though these states are not tracial.
	\end{question}
	
	\begin{question}[Extreme traces on $C(S_n^+)$]
		The study of $\tensordistance$ raises many questions about extreme points in the trace space $\mathcal{T}(C(S_n^+))$.  In particular, can we classify the extreme points that arise from finite-dimensional representations of $C(S_n^+)$?  How does the trace space relate to corepresentations of $S_n^+$?  Is $\mathcal{T}(C(S_n^+))$ a Poulsen simplex?
	\end{question}
	
	\begin{question}[Projection of extreme traces onto Birkhoff polytope]
		Let $B_n = \{A \in M_n(\mathbb{R}): A_{ij} \in [0,1], \sum_i A_{ij} = 1, \sum_j A_{ij} = 1\}$ be the Birkhoff polytope of doubly stochastic matrices.  Consider the map $\omega: \partial_e \mathcal{T}(C(S_n^+))) \to B_n$ by sending $\varphi$ to $[\varphi(u_{ij})]_{i,j=1}^n$.  Under what conditions does $\omega(\varphi) = \omega(\psi)$?  Are there natural sets on which $\omega$ is injective?
	\end{question}
	
	\bibliographystyle{amsalpha}
	\bibliography{bib-quantum-metric-group}

\newcommand{\etalchar}[1]{$^{#1}$}
\providecommand{\bysame}{\leavevmode\hbox to3em{\hrulefill}\thinspace}
\providecommand{\MR}{\relax\ifhmode\unskip\space\fi MR }
\providecommand{\MRhref}[2]{%
  \href{http://www.ams.org/mathscinet-getitem?mr=#1}{#2}
}
\providecommand{\href}[2]{#2}
\begin{thebibliography}{DPMTL21}

\bibitem[AB]{AguilarBatterman}
Konrad Aguilar and Zoe Batterman, \emph{The hamming distance and the fell topology on af algebras}, preprint, arXiv:2310.10752.

\bibitem[ABOW25]{AguilarBeheraOmlandWu}
Konrad Aguilar, Karina Behera, Tron Omland, and Nicole Wu, \emph{On the bures metric, $c^*$-norm and quantum metric}, Bulletin of the Australian Mathematical Society (2025), 1--11.

\bibitem[ACJW23]{ACJP2023}
Roy Araiza, Yidong Chen, Marius Junge, and Peixue Wu, \emph{Resource-dependent complexity of quantum channels}, Preprint, arXiv:2303.11304, 2023.

\bibitem[AK]{AustadKyed2025}
Are Austad and David Kyed, \emph{Quantum metrics from length functions on quantum groups}, preprint, arXiv:2503.01501.

\bibitem[AK18]{AguilarKaad}
Konrad Aguilar and Jens Kaad, \emph{The podles sphere as a spectral metric space}, Journal of Geometry and Physics \textbf{133} (2018).

\bibitem[AKK22a]{AguilarKaadKyed1}
Konrad Aguilar, Jens Kaad, and David Kyed, \emph{The podleś spheres converge to the sphere}, Communications in Mathematical Physics \textbf{392} (2022).

\bibitem[AKK22b]{AguilarKaadKyed2}
\bysame, \emph{Polynomial approximation of quantum lipschitz functions}, Documenta Mathematica \textbf{27} (2022), 765--787.

\bibitem[Alf71]{Alfsen1971}
Erik~M. Alfsen, \emph{Compact convex sets and boundary integrals}, Ergebnisse der Mathematik und ihrer Grenzgebiete. 2. Folge, Springer-Verlag, Berlin, Heidelberg, 1971.

\bibitem[Avi82]{Avitzour}
Daniel Avitzour, \emph{Free produts of $\mathrm{C}^*$-algebras}, Transactions of the American Mathematical Society \textbf{271} (1982), 423–35.

\bibitem[BBC08]{BanicaBichonCollins2007}
Teodor Banica, Julien Bichon, and Beno{\^i}t Collins, \emph{Quantum permutation groups: a survey}, Banach Center Publications \textbf{78} (2008), no.~1, 13--34.

\bibitem[Bla85]{Blackadar1985}
Bruce Blackadar, \emph{Shape theory for $\mathrm{C}^*$-algebras.}, Mathematica Scandinavica \textbf{56} (1985), 249--275 (eng).

\bibitem[Bla06]{Blackadar2006}
Bruce Blackadar, \emph{Operator algebras: Theory of ${C}^*$-algebras and von {N}eumann algebras}, Encyclopaedia of Mathematical Sciences, vol. 122, Springer-Verlag, Berlin, Heidelberg, 2006.

\bibitem[BO08]{BrownOzawa2008}
Nathaniel~P. Brown and Narutaka Ozawa, \emph{$\mathrm{C}^*$-algebras and finite-dimensional approximations}, Graduate Studies in Mathematics, vol.~88, American Mathematical Society, Providence, 2008.

\bibitem[BPTV24]{BunthPitrikTitkosVirosztek2024}
Gergely Bunth, J\'ozsef Pitrik, Tam\'as Titkos, and D\'aniel Virosztek, \emph{Metric property of quantum wasserstein divergences}, Phys. Rev. A \textbf{110} (2024), 022211.

\bibitem[BR24]{BlackadarRordam2024}
Bruce Blackadar and Mikael R{\o}rdam, \emph{The space of tracial states on a {$\mathrm{C}^*$-algebra}}, Expositiones Mathematicae (2024), 125618.

\bibitem[BV01]{BianeVoiculescu2001}
P.~Biane and D.~Voiculescu, \emph{A free probability analogue of the wasserstein metric on the trace-state space}, Geom. Funct. Anal. \textbf{11} (2001), no.~6, 1125–1138.

\bibitem[CGMP]{CaputoGerolinMoninaPortinale2024}
Emanuele Caputo, Augusto Gerolin, Nataliia Monina, and Lorenzo Portinale, \emph{Quantum optimal transport with convex regularization}, preprint, arXiv:2409.03698.

\bibitem[CGS{\etalchar{+}}23]{CGSTWclassification}
Jos{\'e}~R. Carri{\'o}n, James Gabe, Christopher Schafhauser, Aaron Tikuisis, and Stuart White, \emph{Classifying $*$-homomorphisms i: Unital simple nuclear {$\mathrm{C}^*$-algebras}}, arXiv:2307.06480, 2023.

\bibitem[CM17]{CarlenMaas2017}
Eric~A. Carlen and Jan Maas, \emph{Gradient flow and entropy inequalities for quantum markov semigroups with detailed balance}, Journal of Functional Analysis \textbf{273} (2017), no.~5, 1810--1869.

\bibitem[Con89]{Connes1989}
A.~Connes, \emph{Compact metric spaces, fredholm modules, and hyperfiniteness}, Ergodic Theory Dynam. Systems \textbf{9} (1989), 207--220.

\bibitem[Con94]{Connes1994}
\bysame, \emph{Noncommutative geometry}, Academic Press, Inc., Diego, CA, 1994.

\bibitem[CP79]{CuntzPedersen1979}
J.~Cuntz and G.~K. Pedersen, \emph{Equivalence and traces on {$\mathrm{C}^*$}-algebras}, Journal of Functional Analysis \textbf{33} (1979), no.~2, 135--164.

\bibitem[CR17]{ChristRieffel2017}
Michael Christ and Marc~A. Rieffel, \emph{Nilpotent group $\mathrm{C}^*$-algebras as compact quantum metric spaces}, Canad. Math. Bull \textbf{60} (2017), 77--94.

\bibitem[Dav96]{Davidson1996}
Kenneth~R. Davidson, \emph{$\mathrm{C}^*$-algebras by example}, Fields Institute Monographs, vol.~6, American Mathematical Society, 1996.

\bibitem[DPMTL21]{DePalmaMarvianTrevisanLloyd2021}
Giacomo De~Palma, Milad Marvian, Dario Trevisan, and Seth Lloyd, \emph{The quantum wasserstein distance of order 1}, IEEE Transactions on Information Theory \textbf{67} (2021), no.~10, 6627--6643.

\bibitem[DT21]{DePalmaTrevisan2021}
Giacomo {De Palma} and Dario Trevisan, \emph{Quantum optimal transport with quantum channels}, Ann. Henri Poincar{\'e} \textbf{22} (2021), 3199--3234.

\bibitem[Duv22]{Duvenhage2022}
Rocco Duvenhage, \emph{Quadratic {W}asserstein metrics for von {N}eumann algebras via transport plans}, Journal of Operator Theory \textbf{88} (2022), no.~2, 289--308.

\bibitem[FKS16]{FranzKulaSkalski2016}
Uwe Franz, Anna Kula, and Adam Skalski, \emph{L{\'e}vy processes on quantum permutation groups}, Linear Operators and Linear Systems, vol. 252, pp.~193--259, Birkh{\"a}user Basel, 2016.

\bibitem[Fre19]{Freslon2019}
Amaury Freslon, \emph{Cut-off phenomenon for random walks on free orthogonal quantum groups}, Probab. Theory Relat. Fields \textbf{174} (2019), 731--760.

\bibitem[FSW25]{FreslonSkalskiWang2024}
Amaury Freslon, Adam Skalski, and Simeng Wang, \emph{Tracial central states on compact quantum groups}, Journal of Functional Analysis \textbf{289} (2025), no.~7, 110988.

\bibitem[FTW22]{FTW2022}
Amaury Freslon, Lucas Teyssier, and Simeng Wang, \emph{Cutoff profiles for quantum l{\'e}vy processes and quantum random transpositions}, Probab. Theory Relat. Fields \textbf{183} (2022), 1285--1327.

\bibitem[GJNS22]{GangboJekelNamShlyakhtenko2022}
Wilfrid Gangbo, David Jekel, Kyeongsik Nam, and Dimitri Shlyakhtenko, \emph{Duality for optimal couplings in free probability}, Communications in Mathematical Physics \textbf{396} (2022), no.~3, 903--981.

\bibitem[Jac23a]{Jacelon2023chaotic}
Bhishan Jacelon, \emph{Chaotic tracial dynamics}, Forum of Mathematics, Sigma \textbf{11} (2023), e39.

\bibitem[Jac23b]{Jacelon2023metrics}
Bhishan Jacelon, \emph{Metrics on trace spaces}, Journal of Functional Analysis \textbf{285} (2023), no.~4, 109977.

\bibitem[Jac24]{Jacelon2024}
\bysame, \emph{Quantum metric {C}hoquet simplices}, Preprint, arXiv:2408.04368, 2024.

\bibitem[JB15]{BhomwickVoigtZacharias2015}
Joachim~Zacharias Jyotishman~Bhowmick, Christian~Voigt, \emph{Compact quantum metric spaces from quantum groups of rapid decay}, J. Noncommut. Geom. \textbf{9} (2015), 1175–1200.

\bibitem[JS97]{JonesSunder1997}
V.~Jones and V.~S. Sunder, \emph{Introduction to subfactors}, London Mathematical Society Lecture Note Series, Cambridge University Press, 1997.

\bibitem[LMR20]{LupiniMancinskaRoberson2020}
Martino Lupini, Laura Man{\v c}inska, and David~E. Roberson, \emph{Nonlocal games and quantum permutation groups}, Journal of Functional Analysis \textbf{279} (2020), no.~5, 108592.

\bibitem[MR21]{MusatRordam2021}
Magdalena Musat and Mikael Rørdam, \emph{Factorizable maps and traces on the universal free product of matrix algebras}, International Mathematics Research Notices \textbf{2021} (2021), no.~23, 17951--17970.

\bibitem[Mur90]{Murphy1990}
Gerald~J. Murphy, \emph{$\mathrm{C}^*$-algebras and operator theory}, Academic Press, Boston, 1990.

\bibitem[Pop93]{Popa1993}
Sorin Popa, \emph{Markov traces on universal jones algebras and subfactors of finite index}, Invent Math \textbf{111} (1993), 375--405.

\bibitem[PX03]{PisierXu2003}
Gilles Pisier and Quanhua Xu, \emph{Non-commutative {$L^p$}-spaces}, Handbook of the geometry of {B}anach spaces (Williams~B. Johnson and Joram Lindenstrauss, eds.), vol.~2, Elsevier, 2003, pp.~1459--1517.

\bibitem[Rie98]{Rieffel1998}
M.~A. Rieffel, \emph{Metrics on states from actions of compact groups}, Doc. Math. \textbf{3} (1998), 215--229.

\bibitem[Rie99]{Rieffel1999}
\bysame, \emph{Metrics on state spaces}, Doc. Math. \textbf{4} (1999), 559--600.

\bibitem[Rie02]{Rieffel2002}
Marc~A. Rieffel, \emph{Group $\mathrm{C}^*$-algebras as compact quantum metric spaces}, Doc. Math. \textbf{7} (2002), 605--651.

\bibitem[Rie25]{Rieffel2025}
\bysame, \emph{Quantum hamming metrics}, Preprint, arXiv:2507.23046, 2025.

\bibitem[STT25]{SkalskiTodorovTurowska2025}
Adam Skalski, Ivan~G. Todorov, and Lyudmila Turowska, \emph{Coupling capacity in c*-algebras}, Proceedings of the Royal Society of Edinburgh: Section A Mathematics \textbf{155} (2025), no.~1, 81–103.

\bibitem[VDN92]{DykemaNicaVoiculescu1992}
Dan Voiculescu, Ken Dykema, and Alexandru Nica, \emph{Free random variables}, 1992.

\bibitem[Voi85]{voiculescu1985}
Dan-Virgil Voiculescu, \emph{Symmetries of some reduced free product $\mathrm{C}^*$-algebras}, Operator Algebras and their Connections with Topology and Ergodic Theory (Huzihiro Araki, Calvin~C. Moore, {\c{S}}erban-Valentin Stratila, and Dan-Virgil Voiculescu, eds.), Springer Berlin Heidelberg, Berlin, Heidelberg, 1985, pp.~556--588. \MR{0799593}

\bibitem[Wan98]{Wang1998}
S.~Wang, \emph{Quantum symmetry groups of finite spaces}, Comm. Math. Phys. \textbf{195} (1998), no.~1, 195–211.

\bibitem[Wor79]{Woronowicz1979}
S.~L. Woronowicz, \emph{Pseudospaces, pseudogroups and pontryagin duality}, Proceedings of the International Conference on Mathematics and Physics, Lausanne, Lecture Notes in Physics, vol. 116, 1979, pp.~407--412.

\bibitem[Wor87]{Woronowicz1987}
\bysame, \emph{Compact matrix pseudogroups}, Communications in Mathematical Physics \textbf{111} (1987), no.~4, 613--665.

\bibitem[Wor98]{Woronowicz1998}
\bysame, \emph{Compact quantum groups}, 1998.

\end{thebibliography}
	
\end{document}